\newtheorem{thm}{Theorem}
\newtheorem{prop}{Proposition}
\newtheorem{lem}[prop]{Lemma}
\newtheorem{cor}[prop]{Corollary}
\newtheorem{clm}[prop]{Claim}
\newtheorem{conj}[prop]{Conjecture}
\theoremstyle{definition}
\newtheorem{df}[prop]{Definition}
\newtheorem{rmk}[prop]{Remark}
\newcommand{\T}{{\mathbb{T}}}
\newcommand{\R}{{\mathbb{R}}}
\newcommand{\Z}{{\mathbb{Z}}}
\newcommand{\C}{{\mathbb{C}}}
\newcommand{\Q}{{\mathbb{Q}}}
\newcommand{\N}{{\mathbb{N}}}
\newcommand{\F}{{\mathbb{F}}}
\newcommand{\bP}{{\mathbb{P}}}
\newcommand{\fT}{{\mathfrak{t}}}
\newcommand{\fw}{{\mathsf{w}}}
\newcommand{\ham}{\operatorname{Ham} }
\newcommand{\gw}{\operatorname{GW}}
\newcommand{\rH}{{\mathcal{H}}}
\newcommand{\ah}{\mathcal{A}_H}
\newcommand{\crit}{\operatorname{Crit}}
\newcommand{\ind}{\operatorname{ind}}
\newcommand{\pss}{\operatorname{PSS}}
\newcommand{\spec}{\operatorname{Spec}}
\newcommand{\cl}{\operatorname{cuplength}}
\newcommand{\bcl}{{{\rm qcl}_\zeta}}
\title{Degenerate symplectic fixed points and Gromov-Witten invariants}
\author{Wenmin Gong}
\date{}
\begin{document}
\maketitle

\begin{abstract}
We establish a connection between Gromov-Witten invariants and the number of fixed points of Hamiltonian diffeomorphisms on a closed rational symplectic manifold via deformed Hamiltonian spectral invariants.  We  generalize Givental's symplectic fixed point theorem for Fano toric manifolds to closed rational symplectic manifolds which admit nonzero Gromov-Witten invariants with fixed marked points and one point insertion. We prove a new cuplength estimate of symplectic fixed points  involved in deformed spectral invariants. We extend Schwarz's quantum cuplength to the notion of deformed quantum cuplength for symplectic periods and employ it to estimate the number of fixed points of Hamiltonian diffeomorphisms on monotone symplectic manifolds with nonzero mixed Gromov-Witten invariants. 
\end{abstract}

\section{Introduction}\label{sec:1}

The Arnol'd conjecture~\cite{Ar,Ar1} asserts that every Hamiltonian diffeomorphism $\varphi$ of a closed symplectic manifold $(M,\omega)$ possesses at least as many fixed points as a smooth function $f:M\to\R$ on $M$ possesses critical points. This longstanding conjecture has a rich history, detailed in the work of Hofer and Zehnder~\cite[Chapter~6]{HZ}. The most general answer obtained by Rudyak-Oprea~\cite{RO} who showed that this conjecture holds for closed symplectically aspherical manifolds. The main motivation of this paper is the following weaker version, i.e., the homological Arnol'd conjecture:
\[
\#\{\hbox{fixed points of}\;\varphi\}\geq {\rm cuplength}(M;R).
\]
Here $ {\rm cuplength}(M;R)$ denotes the cuplength of $M$ over a coefficient ring $R$ which is defined as the maximal number $k+1\in\N$ such that there exist $k$ cohomology classes $a_i\in H^*(M;R)$ with $\deg(a_i)>0$ satisfying $a_1\cup\cdots \cup a_k\neq 0$. If all the fixed points of $\varphi$ are a priori nondegenerate, then the nondegenerate homological Arnol'd conjecture asserts that the number of the fixed points of  
$\varphi$ is not less than the Betti sum of $M$ over any coefficient ring. Since the advent of Floer homology~\cite{Fl1,Fl2,Flo}, there has been a huge progress in the nondegenerate version, for instance, in the 1990s, Fukaya-Ono~\cite{FO}, Liu-Tian~\cite{LT0} and Ruan~\cite{Ru} independently settled Arnol'd conjecture over rational numbers; more recently,  Abouzaid-Blumberg~\cite{AB}, and Bai-Xu~\cite{BX} and Rezchikov~\cite{Re} extended these results to the coefficients over any finite field and intergers, respectively. For earlier results on the nondegenerate Arnol'd conjecture, we refer to~\cite{CZ,Flo,HS,Ono}, etc. It is worth to mention that Pardon~\cite{Pa} and Filippenko-Wehrheim~\cite{FW} employed different versions of the virtual technique to reprove Arnol'd conjecture over rational numbers.

However, compared to the nondegenerate case, progress on the degenerate homological Arnold conjecture has not kept pace and, in fact, remains far from being solved except for some special cases, see for instance~a standard torus $(\mathbb{T}^{2n},\omega_0)$ by Conley and Zehnder~\cite{CZ}, symplectic manifolds $(M,\omega)$ with $\pi_2(M)=0$ by Floer~\cite{Fl3} and Hofer~\cite{Ho}, the complex projective space $\bP^n$ with the Fubini-Study form $\omega_{\rm FS}$ by Fortune~\cite{For}, and negatively monotone symplectic manifolds $(M^{2n},\omega)$ with minimal Chern number at least $n$ by Le and Ono~\cite{LO}, etc. As far as the author knows, $(\bP^n,\omega_{\rm FS})$  is the only monotone symplectic manifold, i.e., $\omega|_{\pi_2(M)}=\lambda c_1|_{\pi_2(M)}\neq 0$ with $\lambda>0$, for which 
the degenerate homological Arnol'd conjecture has been proven until now. 


As the title of~\cite{Flo}  indicated, Floer was aware of the significance of $J$-homolophic spheres toward the degenerate Arnol'd conjecture more than 35 years ago. It is well known that $J$-holomorphic curves serve as the fundamental building blocks in the construction of Gromov-Witten invariants. These invariants have some important applications in contact/symplectic dynamics. For example, one was given by Liu-Tian~\cite{LT} and Taubes~\cite{Tau} to prove Weinstein conjecture  for certain contact manifolds in high dimensions and for arbitrary closed $3$-dimensional contact manifolds respectively by using the nonvanishing property of these invariants (prior to these two results, Hofer and Viterbo~\cite{HoV} and Chen~\cite{Ch} had been aware of the important role of the appearance of $J$-holomorphic spheres toward Weinstein conjecture); another one was first discovered by Lu~\cite{Lu1} and latter reproved by~Usher~\cite{Us} for bounding Hofer-Zehnder capacity from above for a large class of symplectic manifolds, etc. 

It was commented by McDuff and Salamon~\cite[Remark~9.4.12]{MS}: ``The existence of a nontrivial Gromov–Witten invariant is more significant, though as yet the geometric and dynamical consequences of this condition are far from being fully understood. So far, most applications have come via properties of the small quantum cohomology ring ..." In this paper, we will use nonzero mixed Gromov-Witten invariants via spectral invariants of homology classes in the big quantum homology $H_*(M;\Lambda)$ with quantum products $*_\zeta$\footnote{if ${\rm dim}M=2n$, the deformation parameter $\zeta$ is an element of $\oplus_{i=0}^{n-1}H_{2i}(M;\Lambda_0)$ for a certain subring $\Lambda_0$ of $\Lambda$ containing $\C$, see Section~\ref{subsec:bigquan}.} to estimate the number of fixed points of Hamiltonian diffeomorphisms where $\Lambda$ is the Novikov ring over $\C$. 

The mixed Gromov-Witten invariant ${\rm GW}_{0,l+k,A}^{\{1,\ldots,l\}}(a_1,\ldots,a_l,b_1,\ldots, b_k)$ from Ruan and Tian~\cite{RT}  (see also~\cite[Definition~7.3.7]{MS}) formally counts (with appropriate signs) the tuples $(u,\vec{w})=(u,\{w_j\}_{j=1}^k)$ consisting of a $\{J_z\}$-holomorphic sphere $u:S^2\to M$ representing the class $A$ with $u(z_i)\in\alpha_i$ for $1\leq i\leq l$ and $u(w_j)\in \beta_j$ for $1\leq j\leq k$, where $\{z_i\}_{i=1}^l$ is a tuple of fixed pairwise distinct points on $S^2$ and the pairwise distinct marked points $w_j\in S^2\setminus\{z_i\}_{i=1}^l$ are allowed to vary freely for $1\leq j\leq k$, and $\alpha_i, \beta_j$ are generic representatives of the classes $a_i$ and $b_j$ respectively.  Note that when fixing $3$ pairwise distinct points on $S^2$, the mixed Gromov-Witten invariant ${\rm GW}_{0,k+3,A}^{\{1,2,3\}}(a_1,a_2,a_3,b_1,\ldots, b_k)$ is precisely the usual Gromov-Witten invariant ${\rm GW}_{0,k+3,A}(a_1,a_2,a_3,b_1,\ldots, b_k)$ as defined in~\cite[Section~7.1]{MS} for the semipositive case and~\cite{FO} for the general case. 

Our first theorem gives a concrete connection, for a rational symplectic manifold $(M,\omega)$, between the number of fixed points of Hamiltonian diffeomorphisms and nonzero mixed Gromov-Witten invariants with one point insertion. Here  ``rational" means that  $\omega(\pi_2(M))=p\cdot\Z$ for some positive number $p$, called the {\it minimal symplectic period} of $(M,\omega)$. Throughout this paper, we set $H_2(M)=H_2(M;\mathbb{Z})/{\rm torsion}$.

\begin{thm}\label{thm:onept}
Let $(M^{2n},\omega)$ be a closed rational symplectic manifold with  minimal symplectic period $p>0$ which admits a nonzero mixed Gromov-Witten invariant of the form:
\[
    {\rm GW}_{0,l+k+1,A}^{\{1,\ldots,l+1\}}(a_1,\ldots,a_l,[pt],b_1,\ldots,b_k)    
\]
where $A\in H_2(M)$, $a_1,\ldots, a_l\in H_{*<2n}(M;\Q)$ with $2\leq l\in\N$, and the classes $b_1,\ldots,b_k$ are rational homology classes of even degree  (when $k=0$ there is no class $b_i$ insertion). Assume that
\begin{equation}\label{e:infarea}
\langle [\omega],A\rangle=\inf\big\{\langle [\omega],B\rangle\big|\;{\rm GW}_{0,l+i+1,B}^{\{1,\ldots,l+1\}}(a_1,\ldots,a_l,c,b_1,\ldots,b_i)\neq 0\big\}
\end{equation}
where the infimum is taken over any $c\in H_*(M;\Q)$, any $i\in\N\cup\{0\}$ and any $b_j\in H_{\rm even}(M;\Q)$ with $j\in\{1,\ldots,i\}$. Then, for each $\varphi\in\ham(M,\omega)$, 
\begin{equation}\label{e:thm1}\#{\rm Fix}(\varphi)\geq\bigg\lceil\frac{pl} {\langle[\omega],A\rangle}\bigg\rceil\end{equation} 
where $\lceil\cdot\rceil$ denotes the smallest integer that is greater or equal to the given number.
\end{thm}

A direct consequence of Theorem~\ref{thm:onept} is the following two corollaries. 

\begin{cor}\label{cor:twopts}
Let $(M^{2n},\omega)$ be a closed rational symplectic manifold with  minimal symplectic period $p>0$ which admits a nonzero Gromov-Witten invariant of the form:
\[
    {\rm GW}_{0,k+3,A}(a_0,a_1,[pt],b_1,\ldots,b_k)    
\]
where $A\in H_2(M)$, $a_0,a_1\in H_{*<2n}(M;\Q)$, and the classes $b_1,\ldots,b_k$ are rational homology classes of even degree. If $\langle[\omega],A\rangle$=p, then each Hamiltonian diffeomorphism $\varphi$ of $(M,\omega)$ has at least two fixed points. 
\end{cor}

A symplectic manifold is called \textit{uniruled} if there is a nonzero Gromov-Witten invariant involving a point insertion and with a nontrivial class curve. Uniruledness is a fundamental concept in birational geometry and is invariant under symplectic birational cobordism (see~\cite{HLR}). For more results on the notion of uniruledness and related applications, we refer to~\cite{Ko,Ru,Mc,LR,Lu1,Lu2}, etc. 

The \textit{symplectic effective cone} of a closed symplectic manifold $(M,\omega)$ is defined by \[K^{\rm eff}(M,\omega)=\big\{\beta\in H_2(M)\;|\;\exists\; \beta_1,\dots,\beta_N\in H_2(M)\;\hbox{such that}\;\beta=\sum_i^N\beta_i,\;{\rm GW}_{0,3,\beta_i}\neq 0\big\}.\]
A \textit{symplectic Fano manifold} $(M,\omega)$ is defined to be one in which the class $c_1(TM)$ is positive on the nonzero spherical elements in $K^{\rm eff}(M,\omega)$ (cf.~\cite[p.434]{MS}). 

\begin{cor}\label{cor:hbar}
    Let $(M^{2n},\omega)$ be a closed rational symplectic manifold with  minimal symplectic period $p>0$ which admits a nonzero  Gromov-Witten invariant with fixed marked points:
 \[
    {\rm GW}_{0,l+1,A}^{\{1,\ldots,l+1\}}(a_1,\ldots,a_l,[pt])    
\]  
where $A\in H_2(M)$, $a_1,\ldots, a_l\in H_{*<2n}(M;\Q)$ with $2\leq l\in\N$. Let
$$\hbar^{\rm eff}(M,\omega)=\inf\{\langle [\omega],\beta\rangle\;|\;\beta\in K^{\rm eff}(M,\omega)\}.$$ 
If $\langle [\omega],A\rangle=\hbar^{\rm eff}(M,\omega)$, then 
\begin{equation}\label{e:thm2}
\#{\rm Fix}(\varphi)\geq \bigg\lceil \frac{2p}{\max_i\{2n-a_i\}}\cdot\frac{\langle c_1(TM),A\rangle}{\langle[\omega],A\rangle}\bigg\rceil
\end{equation}
for all $\varphi\in\ham(M,\omega)$.
\end{cor}

For $\zeta\in \oplus_{i=0}^{n-1}H_{2i}(M;\Lambda_0)$, 
we denote by $\gamma_\zeta(\varphi)$  the $\zeta$-defomed spectral norm of $\varphi\in\ham(M;\omega)$ (see~Definition~\ref{df:zetanorm}).  We have the following fixed point estimate of $\varphi$ in terms of $\gamma_\zeta(\varphi)$. 
\begin{thm}\label{thm:Arnoldspec}
Let $(M,\omega)$ be  a closed rational symplectic manifold with period $p>0$. Then, for any 
 $\zeta\in H(M;\Lambda_0)$ and any $\varphi\in\ham(M;\omega)$, we have 
 \[
		\#\operatorname{Fix}(\varphi)\geq  \frac{\operatorname{cuplength}(M;\C)}{\lfloor \gamma_\zeta(\varphi)/p\rfloor+1} 
\]
where $\lfloor a \rfloor$ denotes the largest integer that is less than or equal to $a$.
\end{thm}

Since the Hofer norm $\|\varphi\|$ of $\varphi\in\ham(M;\omega)$ is not less than the $\zeta$-deformed spectral norm $\gamma_\zeta(\varphi)$ (cf.~(\ref{e:hofer})), we have the 
 following corollary which is a straightforward generalization of~\cite[Theorem~8]{Go}. 
\begin{cor}\label{thm:Arnold}
Let $(M^{2n},\omega)$ be a closed rational symplectic manifold with  minimal symplectic period $p>0$. If the Hofer norm of $\varphi$ satisfies $\|\varphi\|<p$, then the degenerate  Arnold conjecture over complex numbers holds for $\varphi$.
\end{cor}

\begin{rmk}
Prior to Corollary~\ref{thm:Arnold},
 Schwarz has proved that Arnol'd conjecture over $\Z_2$ holds for $\|\varphi\|<p$;  see~\cite[Theorem~1.1]{Sc2}.
\end{rmk}

Through introducing the \textit{$\zeta$-deformed quantum cuplength} $\bcl$ for symplectic periods (cf. Definition~\ref{df:bcl}), our approach developed in this paper gives the following extension of Schwarz's estimate~\cite[Theorem~1.2]{Sc2} for  symplectic fixed points to all closed rational symplectic manifolds. 

\begin{thm}\label{thm:bcl}
    Let $(M^{2n},\omega)$ be a closed rational symplectic manifold with period $p>0$. Then, for each $\varphi\in\ham(M,\omega)$, we have the estimate 
    \[
        \#{\rm Fix}(\varphi)\geq 
    \sup\limits_{\zeta\in H_*(M;\Lambda_0)}\sup\limits_{g\in\Gamma_\omega}\bigg\lceil \frac{p\cdot\bcl(g)}{p+g+\|\varphi\|} \bigg\rceil. 
    \]

\end{thm}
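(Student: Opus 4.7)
The plan is to adapt Schwarz's spectral-invariant strategy from~\cite{Sc2} to the $\zeta$-deformed framework built earlier in the paper. Assume $\varphi\in\ham(M,\omega)$ has finitely many fixed points and let $H:S^1\times M\to\R$ be a normalized mean-zero Hamiltonian with $\varphi^1_H=\varphi$; write $N=\#\operatorname{Fix}(\varphi)$. Since $\omega(\pi_2(M))=p\Z$, every fixed point $x$ of $\varphi$ contributes a single coset $\ah(x)+p\Z$ to the projection of $\spec(H)$ in $\R/p\Z$, so this projection has cardinality at most $N$.

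Fix $\zeta\in H_*(M;\Lambda_0)$ and $g\in\Gamma_\omega$ and set $k:=\bcl(g)$. By Definition~\ref{df:bcl}, there is a sequence of nonzero classes $\beta_1,\ldots,\beta_k$ in the $\zeta$-deformed quantum homology, obtained as iterated $\zeta$-deformed quantum products of classes of positive degree, whose total symplectic period equals $g$. The $\zeta$-deformed spectral invariants $c_\zeta(\,\cdot\,,H)$ should enjoy the usual trio of properties: (i) spectrality $c_\zeta(\alpha,H)\in\spec(H)$; (ii) Hofer continuity $|c_\zeta(\alpha,H)-c_\zeta(\alpha,K)|\leq\|H-K\|$; and (iii) a triangle inequality $c_\zeta(\alpha*_\zeta\beta,H\# K)\leq c_\zeta(\alpha,H)+c_\zeta(\beta,K)$ with respect to the $\zeta$-deformed quantum product $*_\zeta$. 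Iterating (iii) along the chain with $K\equiv 0$, and using that $c_\zeta(\beta_j,0)$ recovers the $\zeta$-valuation $-\nu_\zeta(\beta_j)$, produces $k$ spectral numbers $c_\zeta(\beta_1,H),\ldots,c_\zeta(\beta_k,H)\in\spec(H)$ packed into an interval of length at most $g+\|\varphi\|$: the period contribution $g$ enters through the valuations that quantify the chain, and $\|\varphi\|$ enters through the Hofer Lipschitz estimate applied to $c_\zeta(1,H)$.

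The last step is pure pigeonhole. Each coset of $p\Z$ in $\R$ meets an interval of length $L$ in at most $\lfloor L/p\rfloor+1\leq(L+p)/p$ points, hence $\spec(H)$ meets any interval of length $L=g+\|\varphi\|$ in at most $N(p+g+\|\varphi\|)/p$ points. Therefore $\bcl(g)\leq N(p+g+\|\varphi\|)/p$, which after rearrangement and taking the ceiling gives $N\geq\lceil p\cdot\bcl(g)/(p+g+\|\varphi\|)\rceil$. Passing to the supremum over $\zeta\in H_*(M;\Lambda_0)$ and $g\in\Gamma_\omega$ yields the claimed estimate.

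The main obstacle is step two: establishing the full triangle-inequality-plus-valuation package for $c_\zeta$ with respect to $*_\zeta$ on a general rational (not necessarily monotone) symplectic manifold, and verifying that the abstract period datum $g\in\Gamma_\omega$ appearing in Definition~\ref{df:bcl} translates precisely into a length-$g$ window for the produced spectral invariants. This amounts to re-running the Oh--Schwarz--Usher construction inside the $\zeta$-deformed Floer complex and carefully tracking how the $\zeta$-insertions enter the PSS isomorphism and the pair-of-pants moduli underlying the product inequality; once these ingredients are in place, the combinatorial pigeonhole step is routine.
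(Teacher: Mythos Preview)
Your overall strategy is exactly the paper's: produce a chain of $\zeta$-deformed spectral values confined to an interval of length $g+\|H\|$, then pigeonhole against the at most $N$ cosets of $p\Z$ that make up $\spec(H)$. However, you miss the one essential ingredient. The triangle inequality (your (iii)) only gives $\rho_\zeta(\alpha*_\zeta a_i,H)\le\rho_\zeta(\alpha,H)$ when $a_i\in H_{*<2n}(M;\C)$ (so $I_\nu(a_i)=0$); it does not give \emph{strict} inequality. Without strictness the spectral numbers along the chain may all coincide, and then pigeonhole yields nothing about $N$. The paper closes this gap with the Ljusternik--Schnirelman inequality (Theorem~\ref{thm:LjS}): if $\#\operatorname{Fix}(\varphi_H^1)<\infty$ and $u\in\widehat H_*(M;\Lambda)$, then $\rho_\zeta(\alpha*_\zeta u,H)<\rho_\zeta(\alpha,H)+I_\nu(u)$ strictly. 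Feeding the chain $[M],\,a_1,\,a_1*_\zeta a_2,\ldots$ into Corollary~\ref{cor:specineq} then gives $k+1=\bcl(g)$ \emph{distinct} action values, which is what the pigeonhole needs.

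You also misidentify the ``main obstacle''. The spectrality--continuity--triangle package for $\rho_\zeta$ (your (i)--(iii)) is already established earlier in the paper (Proposition~\ref{prop:property}, items (P1), (P2), (P6)), so there is nothing to re-run; the genuine work is the strict LS inequality above. Finally, your account of how $g$ enters is garbled: the chain classes $a_i\in H_{*<2n}(M;\C)$ all have valuation zero, so no period appears step by step. The period $g$ enters only at the end, via $I_\nu(a_1*_\zeta\cdots*_\zeta a_k)\ge -g$ (because $(a_1*_\zeta\cdots*_\zeta a_k)_g\neq 0$), which together with (P1) gives the lower bound $\rho_\zeta(\beta,H)\ge -g+\int_0^1\min_M H_t\,dt$ and hence confines the $k+1$ distinct values to an interval of length $g+\|H\|$.
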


To the best of the author's knowledge, for closed monotone symplectic manifolds, Schwarz~\cite{Sc2} has extended all previously known estimates for  symplectic fixed points via his approach of quantum cuplength estimate. Our next theorem reveals that his result~\cite[Corollary~1.3]{Sc2} can be reinterpreted in terms of nonzero mixed Gromov-Witten invariants.  

\begin{thm}\label{thm:schtype}
Let $(M^{2n},\omega)$ be a closed monotone symplectic manifold with minimal Chern number $N\geq 1$, and let $a\in H_{*<2n}(M;\Q)$. Suppose that 
for each $2n< l\in\N$, there exists a nonzero mixed Gromov-Witten invariant of the form:
\begin{equation}\label{e:mixgw}
    {\rm GW}_{0,l+k+1,A_l}^{\{1,\ldots,l+1\}}(\underbrace{a,\ldots,a}_l,b_0^l,\ldots,b_k^l)    
\end{equation}
for some integer $k=k(l)\in\N\cup\{0\}$, where $A_l\in H_2(M;\mathbb{Z})/{\rm torsion}$, and $b_0^l,\ldots,b_k^l$ are rational homology classes of even degree. Then, for any $\varphi\in\ham(M,\omega)$, 
\[
        \#{\rm Fix}(\varphi)\geq \frac{2N}{2n-\deg(a)}
\]
provided that the function $l\mapsto k(l)$ is bounded. 
        
\end{thm}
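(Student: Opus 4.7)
The plan is to apply Theorem~\ref{thm:bcl} with a deformation class $\zeta$ adapted to the homology class $a$, to use the family of nonzero mixed invariants to produce arbitrarily long $\zeta$-deformed quantum cup products, and then to combine this with the virtual dimension formula for mixed Gromov-Witten invariants and pass to the limit $l\to\infty$.

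First, I would choose a deformation class $\zeta\in\bigoplus_{i=1}^{n-1}H_{2i}(M;\Lambda_0)$ encoding $a$ (using that $\deg a < 2n$, so $a$ lies in the admissible degree range after restricting to even degrees). Unpacking Definition~\ref{df:bcl}, the $\zeta$-deformed quantum product is built from Gromov-Witten invariants with additional $\zeta$-insertions at varying marked points. The mixed invariant in~\eqref{e:mixgw}, in which $l$ of the $l+1$ fixed marked points carry the class $a$ and the remaining $k+1$ marked points carry the $b_j^l$, is precisely the coefficient extracted by an $l$-fold iterated $\zeta$-deformed cup product evaluated against the test classes $\{b_j^l\}$. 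Its nonvanishing therefore gives
\[
\bcl(g_l)\;\geq\; l, \qquad g_l := \langle[\omega],A_l\rangle \in \Gamma_\omega,
\]
for each $l>2n$.

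Next, the virtual dimension balance for genus-zero mixed GW invariants with $m=l+k+1$ total and $|S|=l+1\geq 3$ fixed marked points reads
\[
l(2n-\deg a) + \sum_{j=0}^{k}(2n-\deg b_j^l) \;=\; 2n + 2c_1(A_l) + 2(m-|S|) \;=\; 2n + 2c_1(A_l) + 2k,
\]
so $c_1(A_l)=\tfrac{l(2n-\deg a)}{2}+O(1)$ because $k(l)$ is uniformly bounded and $\deg b_j^l\in[0,2n]$. By monotonicity $\omega=\lambda c_1$ with $p=\lambda N$, this gives $g_l=\tfrac{\lambda l(2n-\deg a)}{2}+O(1)$ as $l\to\infty$. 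Feeding $\bcl(g_l)\geq l$ and this asymptotic into Theorem~\ref{thm:bcl} yields, for each $l>2n$,
\[
\#{\rm Fix}(\varphi)\;\geq\; \bigg\lceil \frac{p\cdot l}{p+g_l+\|\varphi\|}\bigg\rceil.
\]
Dividing numerator and denominator by $l$ and letting $l\to\infty$, the expression inside the ceiling tends to $\tfrac{\lambda N}{\lambda(2n-\deg a)/2}=\tfrac{2N}{2n-\deg a}$; since $\#{\rm Fix}(\varphi)$ is a fixed integer independent of $l$, the stated inequality follows.

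The main obstacle is the first step, namely extracting $\bcl(g_l)\geq l$ from the single nonzero mixed invariant~\eqref{e:mixgw}. This is the deformed analog of Schwarz's identification~\cite{Sc2} of quantum cuplength with GW insertions. One must verify that the structure constants of the $\zeta$-deformed quantum product, expanded in the $\zeta$-direction, literally contain the mixed GW invariants, and that the marked-point pattern in~\eqref{e:mixgw} (fixing $l+1$ out of $l+k+1$ marked points with $l$ of them carrying $a$) corresponds combinatorially to an $l$-fold iterated deformed cup product paired with a finite collection of test classes. Once this identification is made precise, the remainder is a routine asymptotic combined with Theorem~\ref{thm:bcl}.
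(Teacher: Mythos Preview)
Your overall architecture is exactly the paper's: feed a lower bound on $\bcl(g_l)$ coming from the nonzero mixed invariant into Theorem~\ref{thm:bcl}, then use the dimension formula~(\ref{e:mixdim}) together with monotonicity to control $g_l=\langle[\omega],A_l\rangle$ asymptotically and pass to the limit in $l$. The asymptotic computation you sketch is correct and matches the paper's.

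The gap is in your ``first step,'' and it is more than bookkeeping. Two points:

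\emph{(i) The roles of $a$ and $\zeta$ are reversed in your description.} In the expansion~(\ref{e:expand}), the $l$ classes at the fixed marked points are the \emph{factors} of the $l$-fold product, while the $\zeta$-insertions sit at the varying marked points. So one should take the $l$-fold product $a*_\zeta\cdots*_\zeta a$ and choose $\zeta$ so that the classes $b_1^l,\ldots,b_k^l$ appear among the $\zeta$-insertions; $b_0^l$ plays the role of the dual basis element $c_j$ against which the product is paired. Your phrase ``$\zeta$ encoding $a$'' points in the wrong direction.

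\emph{(ii) Nonvanishing of one GW term does not give nonvanishing of the coefficient.} The coefficient of $T^{g_l}$ in $a^{*_\zeta l}$ is a \emph{sum} over all $B$ with $\langle[\omega],B\rangle=g_l$ and over all $k\geq 0$, so the single nonzero invariant~(\ref{e:mixgw}) could in principle be cancelled. The paper resolves this by a genericity argument (Lemma~\ref{lem:mixnon0}): write $\zeta=\zeta(\vec{x},\vec{y})$ in a basis $\xi_1,\ldots,\xi_N$ of $\oplus_i H_{2i}(M;\Z)/{\rm torsion}$; by the divisor axiom the codimension-two part of $\zeta$ contributes factors $\prod_i e^{x_i(\xi_i\circ B)}$, and since $B\mapsto(\xi_1\circ B,\ldots,\xi_s\circ B)$ is injective on $H_2$, the relevant coefficient becomes a polynomial in $e^{x_i}$ and $y_j$ with a visible nonzero monomial coming from~(\ref{e:mixgw}). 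Hence it is nonzero on an open dense set of $\zeta_l\in\oplus_i H_{2i}(M;\C)$, yielding $\mathrm{qcl}_{\zeta_l}(g_l)\geq l+1$. Note in particular that $\zeta_l$ is allowed to depend on $l$; this is harmless because Theorem~\ref{thm:bcl} takes a supremum over $\zeta$.

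Once Lemma~\ref{lem:mixnon0} is in place, the rest of your outline goes through verbatim.
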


For $\zeta\in\oplus_{i=1}^{n-1}H_{2i}(M,\Lambda_0)$, we call a class $a\in H_{*<2n}(M;\C)$ a \textit{$\zeta$-nonnilpotent element} in the $\zeta$-deformed quatum homology ring $QH_*(M,\Lambda)_\zeta$ if $\underbrace{a *_\zeta\cdots *_\zeta a}_l\neq 0$ for all $l\in\N$. Note that the existence of a $\zeta$-nonnilpotent element  implies that for each $l\geq 2n$, one has a nonzero mixed Gromov-Witten invariant as in (\ref{e:mixgw}). Furthermore, if $\zeta=0$, the function $k(l)$ as in Theorem~\ref{thm:schtype} can be taken to be identically one. Hence, we obtain
\begin{cor}[{\cite[Corollary~1.3]{Sc2}}]\label{cor:nonnilp}
    Let $(M,\omega)$ be a monotone symplectic manifold with minimal Chern number $N$ determined by $c_1(\pi_2(M))=N\cdot \Z$. Suppose that $a\in H_{k}(M;\C)$ ($0\leq k<2n$) is a $0$-nonnilpotent element in $QH_*(M,\Lambda)_0$. Then, for all $\varphi\in\ham(M,\omega)$, 
\[
\#{\rm Fix}(\varphi)\geq \frac{2N}{2n-k}.
\]
\end{cor}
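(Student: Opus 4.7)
The plan is to deduce this corollary as a direct specialization of Theorem~\ref{thm:schtype} with $\zeta=0$. The strategy is to convert the nonnilpotency hypothesis on $a$ into a sequence of nonzero mixed Gromov--Witten invariants of the form~(\ref{e:mixgw}) for which the auxiliary function $k(l)$ can be chosen to vanish identically, so that Theorem~\ref{thm:schtype} applies verbatim.

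First I would unpack the definition of the undeformed quantum product $*_0$: for each $l\in\N$, the coefficient of the Novikov monomial $q^{-A}$ in $\underbrace{a*_0\cdots*_0 a}_l$ is determined, via the Poincar\'e pairing on $H_*(M;\C)$, by the genus-zero Gromov--Witten invariants $\gw_{0,l+1,A}(a,\ldots,a,c)$ as $c$ runs over a basis of $H_*(M;\C)$ of complementary degree. Consequently the hypothesis $\underbrace{a*_0\cdots*_0 a}_l\neq 0$ furnishes, for each such $l$, some class $A_l\in H_2(M;\Z)/\mathrm{torsion}$ and some $b_0^l\in H_*(M;\C)$ of even degree with $\gw_{0,l+1,A_l}(a,\ldots,a,b_0^l)\neq 0$. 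Since every marked point in this invariant is already constrained by a homological insertion, it agrees with the mixed Gromov--Witten invariant $\gw_{0,l+1,A_l}^{\{1,\ldots,l+1\}}(a,\ldots,a,b_0^l)$ of Ruan--Tian, so the resulting data match~(\ref{e:mixgw}) precisely with $k(l)\equiv 0$, and the function $l\mapsto k(l)$ is trivially bounded.

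Restricting to the range $l>2n$ required by the hypothesis of Theorem~\ref{thm:schtype} and applying that theorem with the constant choice $k(l)\equiv 0$ then yields $\#\mathrm{Fix}(\varphi)\geq \frac{2N}{2n-\deg(a)}=\frac{2N}{2n-k}$ for every $\varphi\in\ham(M,\omega)$. The main obstacle is essentially nonexistent here: all genuine analytic and algebraic content has been absorbed into Theorem~\ref{thm:schtype}, and the remaining work consists only of the standard dictionary between iterated undeformed quantum products and genus-zero Gromov--Witten invariants, together with the observation that an ordinary GW invariant coincides with the mixed GW invariant in which every marked point is fixed. The only subtlety worth checking is that $b_0^l$ can indeed be taken to have even degree, which follows from the fact that the quantum product preserves the $\Z/2$-grading and $a$ lives in even degree once $k$ is even; if $k$ is odd the relevant products vanish for parity reasons, so the nontrivial content of the hypothesis already forces $k$ to be even.
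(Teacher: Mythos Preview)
Your proposal is correct and follows exactly the paper's own argument: the paragraph preceding the corollary observes that a $\zeta$-nonnilpotent element produces the required nonzero mixed Gromov--Witten invariants, and that for $\zeta=0$ one may take $k(l)\equiv 0$, after which Theorem~\ref{thm:schtype} applies directly. One small correction to your exposition: the $l$-fold product $a*_0\cdots*_0 a$ is \emph{not} governed by the ordinary invariants $\gw_{0,l+1,A}(a,\ldots,a,c)$ but rather---via the splitting axiom, as recorded in formula~(\ref{e:expand}) with $\zeta=0$---directly by the mixed invariants $\gw_{0,l+1,A}^{\{1,\ldots,l+1\}}(a,\ldots,a,c)$ with all marked points fixed; your sentence ``since every marked point is already constrained by a homological insertion, it agrees with the mixed invariant'' conflates the two notions (for $l+1>3$ they differ by the dimension of $\overline{M}_{0,l+1}$), but the detour is unnecessary since~(\ref{e:expand}) already hands you the mixed invariants.
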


Using Corollary~\ref{cor:nonnilp}, Schwarz~\cite[Sect.2.5]{Sc2} proved the following:
\begin{itemize}
\item[(i)] Let $M=\bP^n\times W$ be the product symplectic manifold where $\bP^n$ is equipped with the standard symplectic form, and $W$ is symplectically aspherical. Then, each Hamiltonian diffeomorphism of $M$ has at least $n+1$ fixed points. 
\item[(ii)] Let ${\rm Gr}(k,n)$ be the Grassmannian  of $k$-planes in $\C^n$ equipped with the symplectic form $\omega$ satisfying $c_1({\rm Gr}(k,n))=n[\omega]$. Then, each $\varphi\in \ham({\rm Gr}(k,n),\omega)$ has at least $n$ fixed points. In particular, this recovers Fortune's theorem~\cite{For}. 
\item[(iii)] Let ${\rm F}_{n+1}$ be the complete flag manifold  of
sequences $V^1\subset\ldots\subset V^n$ of subspaces of $\C^{n+1}$ with ${\rm dim}_\C V^i=i$ equipped with the symplectic form $\omega$ satisfying $c_1({\rm F}_{n+1})=2[\omega]$. Then, for any $\varphi\in \ham({\rm F}_{n+1},\omega)$, $\#{\rm Fix}(\varphi)\geq 2$. 

\end{itemize}

Inspired by Schwarz's quantum cuplength estimate, a different way for estimating symplectic fixed points was obtained by the author via introducing the so-called \textit{fundamental quantum factorization};  see~\cite[Definition~23]{Go}. 
Using the big quantum homolgy ring (cf. Section~\ref{subsec:bigquan}), one can extend this notion as follows. 

\begin{df}\label{def:qf}
    Let $(M^{2n},\omega)$ be a closed rational symplectic manifold with  minimal symplectic period $p>0$. We say that $M$ has a \textit{principal fundamental quantum factorization} associated to $\zeta\in \oplus_{i=1}^{n-1}H_{2i}(M,\Lambda_0)$ (denoted by PFQF for short)  \textit{of length $l\in\N$ with order $g\in\N$} if there exist $u_1,\ldots,u_l\in H_{*<2n}(M;\C)$ such that
    $$ u_1*_\zeta\cdots *_\zeta u_l=k T^{gp}[M]+\beta$$
    with $I_\nu(\beta)\leq-gp$, where $0\neq k\in\C$, $\beta\in \oplus_{i=0}^{2n-1}H_i(M;\Lambda)$ and $I_\nu:H_*(M;\Lambda)\to\Gamma_\omega$ is the valuation map as given in~(\ref{e:val}).
\end{df}

\begin{thm}[{\cite[Theorem~9]{Go}}]\label{thm: factorization}
		Let $(M,\omega)$ be  a closed rational symplectic manifold admitting a PFQF of length $l$ with order $g$. Then each $\varphi\in\ham(M,\omega)$ has at least $\lceil \frac{l}{g}\rceil$ fixed points. 
\end{thm}

In the small quantum homology $H_*(M;\Lambda)$  with product $*=*_0$ (i.e., $\zeta=0$), if there is a nonzero class $\beta\in H_{<2n}(M;\Lambda_0)$ such that 
\[
[pt]*\beta=T^{kp}[M]
\]
for some $k\in\N$, we say that $M$ is \textit{point invertible} of order $k$. The class of point invertible manifolds includes, for example, $\bP^n$ and the quadric $Q^{2n}\subset \bP^{n+1}$. Moreover, using the product formula (see~\cite[Exercise~11.1.19]{MS}) for Gromov–Witten invariants, this class is closed with respect to products. 

\begin{thm}\label{e:ptsInv}
Let $(M^{2n},\omega)$ be a closed symplectic manifold with minimal symplectic period $p>0$ and minimal Chern number $N\geq n$. Assume that $M$ is \textit{point invertible} of order $g$. If $N\geq n+1$, then, for any $\varphi\in\ham(M,\omega)$,  
\[
\#\operatorname{Fix}(\varphi)\geq \bigg\lceil \frac{\operatorname{cuplength}(M;\C)}{g} \bigg\rceil.
\]
If $N= n$ and $(M^{2n},\omega)$ is monotone, then 
\[
\#\operatorname{Fix}(\varphi)\geq \min\bigg\{\bigg\lceil \frac{\operatorname{cuplength}(M;\C)}{g} \bigg\rceil, \operatorname{cuplength}(M;\C)-1\bigg\}.
\]
\end{thm}

Clearly, a point invertible manifold has a  nonzero Gromov-Witten invariant with two point insertions.  We remark that  the condition (\ref{e:infarea}) in Theorem~\ref{thm:onept} is just a technical one. The author expects that this condition can be removed.  Moreover, we propose the following

\begin{conj}\label{thm:mainthm}
Let $(M^{2n},\omega)$ be a closed rational symplectic manifold with minimal symplectic period $p>0$,    which admits a nonzero Gromov-Witten invariant of the form:
\begin{equation}\label{e:blowup}
{\rm GW}_{0,k+3,A}\big([pt],a_0,[pt],a_1\ldots,a_k\big)
\end{equation}
where $A\in H_2(M;\mathbb{Z})/{\rm torsion}$,  and $a_0,\ldots,a_k$ are rational homology classes of even degree. Then, for any $\varphi\in\ham(M,\omega)$,  
\[
\#\operatorname{Fix}(\varphi)\geq \bigg\lceil \frac{p\cdot \operatorname{cuplength}(M;\C)}{\Theta(M,\omega)} \bigg\rceil
\]
where 
$
\Theta(M,\omega):=\inf\big\{\langle[\omega],A\rangle\big|{\rm GW}_{0,k+3,A}\big([pt],a_0,[pt],a_1\ldots,a_k\big)\neq 0\big\}
$. 
In particular, if $\Theta(M,\omega)=p$, then each $\varphi\in\ham(M,\omega)$ has at least $\cl(M;\C)$ fixed points. 
\end{conj}

 Clearly, we have $\Theta\geq p$ in the above conjecture because $\gw_{0,k+3,A}\neq 0$ with two point insertions implies that the homology class $A$ can be represented by a
 $J$-holomorphic stable map of genus zero  which has a positive symplectic area. The question of whether there are Gromov-invariants with two point insertions is related to the notion of \textit{rationally connectedness} in algebraic geometry. Voisin~\cite{Vo} and Tian~\cite{Ti1,Ti2} studied some important examples including rationally connected $3$-folds and $4$-folds on this kind of question. Actually, a folklore conjecture asserts that every rationally connected projective manifold carries a nonzero genus zero Gromov-Witten invariant with two point insertions; see for instance~\cite[Conjecture~3.4]{Hu1}. If Conjecture~\ref{thm:mainthm} holds true, then one can apply it to estimate symplectic fixed points in many cases.

\subsection{Applications and examples}\label{sect:app}

\subsubsection{The quadric in $\bP^{n+1}$}
Let $Q\subset \bP^{n+1}$ ($n\geq 2$) be the smooth complex $n$-dimensional quadric given by
\[
Q=\{[z_0,\ldots,z_{n+1}]\in \bP^{n+1}|z_0^2+\cdots +z_n^2=z_{n+1}^2\}.
\] 
We endow $Q$ with the symplectic structure $\omega$ induced from $(\bP^{n+1},\omega_{\rm FS})$. We use the normalization that the symplectic structure $\omega_{\rm FS}$ of $\bP^{n+1}$ satisfies $\int_{\bP^1}\omega_{\rm FS}=1$. For $n=2$, $Q\subset \bP^{3}$ is symplectomorphic to $(\bP^1\times\bP^1,\omega_{\rm FS}\oplus\omega_{\rm FS})$. Let $h\in H_{2n-2}(Q;\Z)$ be the class of a hyperplane section induced from the embedding $Q\subset \bP^{n+1}$. Note that $(Q,\omega)$ is monotone and that $c_1(Q)=n{\rm PD}_Q(h)$. Since for a simply connected algebraic manifold $M$, $\operatorname{cuplength}(M;\C)={\rm dim}_{\C}(M)+1$ (see~\cite{Ber}), we  have  $\operatorname{cuplength}(Q;\C)=n+1$. From Beauville's computation of quantum cohomology (see~\cite{Bea}), the quantum product $*$ on $H_*(Q;\Lambda)$ satisfies the following identities:
\begin{itemize}
\item $h^{*k}=h^{\cap j}$ for any $0\leq k\leq n-1$;
\item $h^{*n}=2[pt]+2T^1[Q]$ (remind that $\int_{\bP^1}\omega_{\rm FS}=1$);
\item $h^{*(n+1)}=4T^1h$.
\end{itemize}
By the second identity, one can see that 
 \[
    {\rm GW}_{0,n+1,A}^{\{1,\ldots,n+1\}}(\underbrace{h,\ldots,h}_n,[pt])\neq 0 
\]  
for the generator $A$ of $H_2(Q;\Z)$ with $\langle [\omega],A\rangle$=1. According to Theorem~\ref{thm:onept}, we obtain
\[
\#{\rm Fix}(\varphi)\geq n
\]
for any $\varphi\in\ham(Q,\omega)$. 

\subsubsection{Fano toric manifolds}
Let $(M^{2m}_\tau,\omega_\tau)$ be a toric  manifold,  namely a symplectic quotient $\C^n /\mkern-5mu/ \T^k$ of $\C^n$ by a subtorus $\T^k\subset\T^n$;  see~Definition~\ref{df:toric}. The homology $H_{2m-2}(M_\tau;\Z)/{\rm torsion}$ is spanned by the classes $[X_1],\ldots,[X_n]$ corresponding to $n$ smooth divisors $X_1,\ldots,X_n$ of $M_\tau$. For an $n$-tuple $d=(d_1,\dots,d_n)$ of nonnegative integers, we denote
\begin{equation}\label{e:Xd}
[X]^{*d}=[X_1]*\cdots*[X_1]*\cdots*[X_n]*\cdots*[X_n].
\end{equation}
where each class $[X_i]$ occurs $d_i$ times, and the right hand side stands for the (undefomed) quantum product $*$ on $H_*(M_\tau;\Lambda)$. 

Notice that for a toric Fano manifold $(M_\tau,\omega_\tau)$, the effective cone $\Delta^{\rm eff}(\tau)$ (cf.~(\ref{e:effcone})), when identified with the set of effective classes in $H_2(M_\tau;\Z)/{\rm torsion}$, is precisely the symplectic effective cone $K^{\rm eff}(M_\tau,\omega_\tau)$ generated by the spheres represented by the edges of Delzant polytope associated to $(M_\tau,\omega_\tau)$; see~\cite[Rmk.11.3.2]{MS}. 
Let $ \mathcal{D}^{\rm eff}(\tau)$ be the cone in $\Z^n$ 
which corresponds to the set  $\Delta^{\rm eff}(\tau)$  (see~(\ref{e:bi})). For each $d\in\mathcal{D}^{\rm eff}(\tau)$ with $d_k\geq 0$, according to Theorem~\ref{thm:fano}, if $(M_\tau,\omega_\tau)$ is Fano then we have  
\begin{equation}\label{e:fundamental}
[X]^{*d}=T^{\langle[\omega],A_{\xi_d}\rangle}[M]
\end{equation}
where the homology class 
$A_{\xi_d}\in H_2(M_\tau;\Z)/{\rm torsion}$ is an  effective one given by (\ref{e:class}) and (\ref{e:bi}). 
Consequently, 
\begin{equation}\label{e:d+}
{\rm GW}^{\{1,\ldots,\sum_id_i+1\}}_{0,1+\sum_id_i,A_{\xi_d}}([X_1],\ldots,[X_1],\ldots,[X_n],\ldots,[X_n],[pt])\neq 0.
\end{equation}

Using Theorem~\ref{thm: factorization}, we now recover the following result of Givental. 
\begin{cor}[{\cite[Theorem~1.1]{Gi}}]\label{cor:Nmonotoric}
Let $(M,\omega)$ be a Fano toric  manifold with minimal symplectic period $p>0$. Let $\mathcal{E}$ be the set of nonzero effective homology classes of $M$ which correspond to points $d=(d_1,\ldots,d_n)$ in $\mathcal{D}^{\rm eff}(\tau)$ with $d_i\geq 0$. Then 
\[
\#{\rm Fix}(\varphi)\geq \sup_{A\in\mathcal{E}} \bigg\lceil \frac{p\cdot\langle c_1(TM),A\rangle}{\langle[\omega],A\rangle}\bigg\rceil
\]
for any $\varphi\in\ham(M,\omega)$. 
\end{cor}

\begin{rmk}
In~\cite[Theorem~1.1]{Gi}, Givental assumed that the symplectic form is primitive, i.e., $\omega(\pi_2(M))=\Z$. That's why the minimal symplectic period $p$ does not occur in his lower bound. As first observed by Givental, Corollary~\ref{cor:Nmonotoric} gives examples of non-trivial symplectic fixed point results for non-monotone symplectic manifolds; see page 453 in~\cite{Gi}.
\end{rmk}

\noindent\textbf{Proof of Corollary~\ref{cor:Nmonotoric}.} 
For any $d\in \mathcal{D}^{\rm eff}(\tau)$ with $d_k\geq 0$, it follows from (\ref{e:fundamental}) that $(M,\omega)$ has a PFQF of length $\ell$ with order $\langle[\omega],A_{\xi_d}\rangle/p$, where $\ell$ is the number of positive components of $d$. According to Theorem~\ref{thm: factorization}, for any $\varphi\in\ham(M,\omega)$ we have
\[
\#{\rm Fix}(\varphi)\geq \bigg\lceil \frac{p\ell}{\langle[\omega],A_{\xi_d}\rangle}\bigg\rceil.
\]
Since we have the nonzero Gromov-Witten invariant with fixed marked points as in (\ref{e:d+}),  the dimension formula (\ref{e:mixdim}) implies that $\langle c_1(TM),A_{\xi_d}\rangle= \ell$.  Hence, we obtain
\[
\#{\rm Fix}(\varphi)\geq \bigg\lceil \frac{p\cdot\langle c_1(TM),A_{\xi_d}\rangle}{\langle[\omega],A_{\xi_d}\rangle}\bigg\rceil.
\]
for any non-negative $d\in \mathcal{D}^{\rm eff}(\tau)$. The proof is complete.
 \qed

 For a monotone toric  manifold $(M,\omega)$, we can also use Theorem~\ref{thm:onept} to obtain the following
\begin{cor}\label{thm:monotoric}
Let $(M,\omega)$ be a monotone toric manifold with minimal Chern number $N\geq 1$. Let $d=(d_1,\ldots d_n)$ be a point in $\mathcal{D}^{\rm eff}(\tau)$ such that
\[
\langle\omega,A_{\xi_d}\rangle=\inf\{\langle\omega,A_{\xi_{d'}}\rangle|d'\in \mathcal{D}^{\rm eff}(\tau)\}=\hbar(M,\omega).     
\]
If $d$ belongs to the first orthant of $\Z^n$, then each Hamiltonian diffeomorphism $\varphi$ has at least $N$ fixed points. 
\end{cor}
\begin{proof}
By (\ref{e:d+}), there is a nonzero Gromov-Witten invariant with fixed marked points and one point insertion in the class $A_{\xi_d}$. Since each divisor $X_i$ has codimension two,  according to Corollary~\ref{cor:hbar}, we see that for any $\varphi\in\ham(M,\omega)$,
\[
\#{\rm Fix}(\varphi)\geq \bigg\lceil \frac{p\cdot\langle c_1(TM),A_{\xi_d}\rangle}{\langle[\omega],A_{\xi_d}\rangle}\bigg\rceil.
\]
The monotonicity condition implies 
\[
\frac{\langle[\omega],A_{\xi_d}\rangle}{p}=\frac{\langle c_1(TM),A_{\xi_d}\rangle}{N}.
\]
Therefore, $\#{\rm Fix}(\varphi)\geq N$.
\end{proof}


\subsubsection{$\mathbb{P}^n$ blown up at one point}
The process of blowing up in complex geometry is well-known. Blowing up replaces a point $x$ in a complex manifold $X$ by the set $\Sigma$ of all lines through this point, which is called an \textit{exceptional divisor} biholomorphic to the complex projective space $\mathbb{P}^{n-1}$. In the symplectic setting, the role of a point is played by a symplectically embedded standard ball, and blowing up amounts to removing the interior of a symplectic ball and collapsing the bounding sphere via
the Hopf map to the exceptional divisor.

Let $(M,\omega)$ be a closed symplectic manifold with   minimal symplectic period $p>0$ which admits a nonzero Gromov-Witten invariant of the form~(\ref{e:blowup}). Recall that the \textit{Gromov width} $c_G$ of a symplectic manifold $(M^{2n},\omega)$ is defined as
\[
c_G:=\sup\{\pi r^2|  \exists\;\hbox{a symplectic embedding}\;B^{2n}(r)\hookrightarrow M\}
\] 
where $B^{2n}(r)$ stands for the open ball of radius $r>0$ with center at the origin in $\R^{2n}$. It is well known that $c_G\leq \Theta(M,\omega)$; see~\cite{Gr,Lu1,Lu2}. 

Given $k,m\in\N$ with $\frac{kp}{m}< c_G$, 
we now take a symplectic embedding $\psi: B^{2n}(\lambda)\hookrightarrow M$ such that $\pi\lambda^2=\frac{kp}{m}$ and a positive number $\epsilon\in\R$ such that $\pi(\lambda+\epsilon)^2<c_G$. Associated to $\psi$, we blow up $(M,\omega)$ at some point $q_0\in M$ of weight $\lambda$, and denote by $(\widetilde{M},\widetilde{\omega}_{\psi,\lambda,\epsilon})$ the resulting symplectic manifold. For the details about this symplectic surgery, we refer the readers to McDuff-Salamon~\cite[Section~7.1]{MS0} and the references therein. We abbreviate $\widetilde{\omega}=\widetilde{\omega}_{\psi,\lambda,\epsilon}$. We denote by $Z\subset\widetilde{M}$ the exceptional divisor which can be identified with $\bP^{n-1}$, and $\pi_M:\widetilde{M}\to M$ the projection map given by $\pi_M|_{M\setminus\{q_0\}}=id$ and $\pi_M(\widetilde{q})=q_0$ for all $\widetilde{q}\in Z$. For simplicity, for any cohomology classes $a\in H^*(M;\Q)$, $b\in H^*(M;\Z)$ we denote their pullbacks $a\in H^*(\widetilde{M};\Q)$, $b\in H^*(\widetilde{M};\Z)$ as 
\[
\widetilde{a}=\pi_M^*a,\quad \widetilde{b}=\pi_M^*b.
\]
Let $e$ denote the Poincar\'{e} dual of the divisor $Z$ in $H^2(\widetilde{M};\Z)$, i.e., $e={PD}_{\widetilde{M}}[Z]$. It is well known that 
\begin{equation}\label{e:omegac1}
[\widetilde{\omega}]=\widetilde{[\omega]}-\pi\lambda^2e,\quad 
c_1(T\widetilde{M})=\widetilde{c_1(TM)}-(n-1)e
\end{equation}
(see~\cite[p.~311]{MS0}). 
From (\ref{e:omegac1}) and the fact that $\widetilde{M}$ is diffeomorphic to the oriented connected sum $M\#\overline{\bP}^n$ (here $\overline{\bP}^n$ is the manifold $\bP^n$ with the opposite orientation), it is not hard to see that $(\widetilde{M},\widetilde{\omega})$ is rational and that the symplectic period of $(\widetilde{M},\widetilde{\omega})$ is $p\cdot{\rm gcd}(m,k)/{m}$. Moreover,  $(\widetilde{M},\widetilde{\omega})$ is monotone if $(M,\omega)$ is monotone with $\omega|_{\pi_2(M)}=\kappa c_1(TM)|_{\pi_2(M)}$ and $\pi\lambda^2=\kappa(n-1)$. 

Let $\omega$ be the Fubini-Study form on $\mathbb{P}^n$ which is normalized to have minimal symplectic period $1$. Blowing up $(\mathbb{P}^n,\omega)$ at one point of weight $\lambda=\sqrt{\frac{n-1}{\pi(n+1)}}$, we obtain a monotone symplectic manifold $(\tilde{\mathbb{P}}^n,\tilde{\omega})$ with minimal Chern number ${\rm gcd}(n+1,2)$. It is well known that the blow-ups of a toric manifold at its toric fixed points are also toric manifolds. Hence, $(\tilde{\mathbb{P}}^n,\tilde{\omega})$ can be seen as a monotone toric manifold. According to Corollary~\ref{cor:Nmonotoric}, for any $\varphi\in\ham(\tilde{\mathbb{P}}^n,\tilde{\omega})$. we have
\[
\#{\rm Fix}(\varphi)\geq {\rm gcd}(n+1,2). 
\]

\subsubsection{Projective bundles over $\bP^n$}
Let $V$ be a rank $r$ bundle over $\bP^n$, and let $\bP(V)$ be the corresponding projective bundle. Let $H$ and $\xi$ be the cohomology classes of a hyperplane in $\bP^n$ and the tautological line bundle in $\bP(V)$. 
Denote by $\pi:\bP(V)\to \bP^n$ the natural projection. Let $h=\pi^*H$.  If $\bP(V)$ is a Fano variety, one can choose the symplectic form on $\bP(V)$ to be the K\"{a}hler form $\omega$ such that $[\omega]=c_1(\bP(V))$ (see~\cite{QR}). Assume now that 
$V=\oplus^r_{i=1}\mathcal{O}_{\bP^n}(m_i)$, where $m_i\geq 1$ for each $i$, $m_i=1$ for at least one $i$, and $\sum_{i=1}^rm_i<(2n+2+r)/2$. It follows from~\cite[Proposition~5.19]{QR} that 
\begin{equation}\label{e:proj}
\prod_{i=1}^r(\bar{\xi}-\bar{h}/m_i)=T^r[\bP(V)]
\end{equation}
where  the left-hand side  stands for the product in the small quantum homology ring $H_*(\bP(V),\Lambda)$, and $\bar{\xi}$ and $\bar{h}$ are Poincar\'{e} dual to $\xi$ and $h$ respectively. 

To apply Theorem~\ref{thm: factorization} to estimate fixed points of Hamiltonian diffeomorphisms on $(\bP(V),\omega)$, we calculate the minimal Chern number of  $\bP(V)$. Since  
\[
T\bP(V)\cong \pi^*T\bP^n\oplus T_{\bP(V)/\bP^n}
\]
where $T_{\bP(V)/\bP^n}$ donotes the vertical tangent bundle along the fibres, the first Chern class is 
\[
c_1(\bP(V))=\pi^*c_1(\bP^n)+c_1(T_{\bP(V)/\bP^n}). 
\]
For a projective bundle, the relative Euler sequence (cf.~\cite{OS}) reads
\[
0\longrightarrow\mathcal{O}_{\bP(V)}\longrightarrow \pi^*V^*\otimes \mathcal{O}_{\bP(V)}(1) \longrightarrow T_{\bP(V)/\bP^n}\longrightarrow 0
\]
where $\mathcal{O}_{\bP(V)}(1)$ is the tautological line bundle on $\bP(V)$. Hence,
\[
c_1(T_{\bP(V)/\bP^n})=c_1(\pi^*V\otimes\mathcal{O}_{\bP(V)}(1)). 
\]
Write $H=c_1(\mathcal{O}_{\bP}(1))$. Then we see that 
\[
c_1(V)=m H,\quad m=\sum_{i=1}^rm_i.
\]
Let $\xi=c_1(\mathcal{O}_{\bP(V)}(1))$. Using the formula for the first Chern class of a tensor product of a vector bundle with line bundle, 
\[
c_1(\pi^*V^*\otimes \mathcal{O}_{\bP(V)}(1))={\rm rank}(V)\cdot\xi+c_1(\pi^*V^*)=r\xi-m \pi^*H.
\]
So $$c_1(T_{\bP(V)/\bP^n})=r\xi-m \pi^*H.$$  
Since $c_1(\bP^n)=(n+1)H$, we see that 
\begin{equation}\label{e:chern}
c_1(\bP(V))=(n+1)\pi^*H+(r\xi-m \pi^*H)=r\xi+(n+1-m)\pi^*H. 
\end{equation}
Next, we determine the values of $c_1(\bP(V))$ on $\pi_2(\bP(V))$. Consider the fibre bundle
\[
\bP^{r-1}\hookrightarrow \bP(V)\stackrel{\pi}{\longrightarrow} \bP^n. 
\]
The homotopy exact sequence yields
\[
0\longrightarrow\pi_2(\bP^{r-1})\longrightarrow \pi_2(\bP(V))\longrightarrow \pi_2(\bP^n)\longrightarrow 0.
\]
So for $r\geq 2$ we obtain
\[
\pi_2(\bP(V))\cong \pi_2(\bP^{r-1})\oplus \pi_2(\bP^n)=\Z\oplus \Z. 
\]
Let $A$ and $B$ be lines in the base $\bP^n$ and a fibre $\bP^{r-1}$ respectively which satisfy
\[
\int_A\pi^*H=1,\quad \int_A \xi=0,\quad \int_B\pi^*H=0,\quad \int_B \xi=1.
\]
Then $\pi_2(\bP(V))$ is freely generated by $A$ and $B$. For any class $aA+bB\in \pi_2(\bP(V))$, by (\ref{e:chern}) we see that 
\[
\int_{aA+bB}c_1(\bP(V))=a(n+1-m)+br.
\]
Therefore, the minimal Chern number of $\bP(V)$ is 
\[
N_{\bP(V)}={\rm gcd}(|n+1-m|,r).
\]
By our choice of symplectic form $\omega$ on $\bP(V)$, the minimal symplectic period of $(\bP(V),\omega)$ is also the minimal Chern number $N_{\bP(V)}$. According to (\ref{e:proj}), $(\bP(V),\omega)$ has the PFQF (with respect to the undeformed product $*_0$) of length $r$ and order $r/N_{\bP(V)}$. Hence, using Theorem~\ref{thm: factorization}  we have the following
\begin{thm}
Let $V=\oplus^r_{i=1}\mathcal{O}_{\bP^n}(m_i)$, where $r\geq 2$, $m_i\geq 1$ for each $i$, $m_i=1$ for at least one $i$, and $\sum_{i=1}^rm_i<(2n+2+r)/2$. Let $\omega$ be a K\"{a}hler form such that $[\omega]=c_1(\bP(V))$.
Then, for any $\varphi\in\ham(\bP(V),\omega)$,
\[
\#{\rm Fix}(\varphi)\geq {\rm gcd}\big(|n+1-\sum_{i=1}^rm_i|,r\big).
\]
\end{thm}

\subsection{Organization of the paper} The paper is structured as follows: Section~\ref{Sec:pre} covers preliminaries, including the Novikov ring, big quantum homology, deformed Floer homology and PSS isomorphisms. Section~\ref{Sec:spec} develops deformed Hamiltonian spectral invariants and proves the key Ljusternik-Schnirelman inequality. Section~\ref{sec:gw} establishes critical links between nonzero Gromov-Witten invariants and nonvanishing big quantum products. Section~\ref{sec:mainpf} provides the proofs of Theorems~\ref{thm:onept}--\ref{e:ptsInv} on degenetate symplectic fixed point estimates. Appendice~\ref{Sec:toric} gives some background on symplectic toric manifolds following the books~\cite{Au,MS}.

\section*{Acknowledgements}
I thank Jinxin Xue for his help and encouragement and for his grant supports NSFC 12271285 and the New Cornerstone investigator program. I am indebted to Egor Shelukhin  for pointing out an error in the proof of Conjecture~\ref{thm:mainthm} in the first version of this manuscript. I thank  Guanheng Chen, Vincent Humili\`{e}re, Tian-Jun Li, Guangcun Lu,  Zhiyu Tian, Jun Zhang and Shuo Zhang for valuable and inspiring remarks.  I am grateful to  Kaoru Ono and Michael Usher for helpful discussions about spectral invariants with bulk studied in~K. Fukaya,  Y.-G. Oh,  H. Ohta and K. Ono~\cite{FOOO} and M. Usher~\cite{Us}, respectively. Last but not the least, I thank Huagui Duan, Hui Liu, Jian Wang, Guowei Yu, Duanzhi Zhang, Qinglong Zhou and Chaofeng Zhu, the organizers of the International Conference on Advanced Topics in Dynamical Systems in Tianjin (October 2025), Kaoru Ono and Yong-Geun Oh, the organizers of RIMS \& IBS-CGP joint workshop in Kyoto (November 2025), for giving me an opportunity to present a preliminary version of this work and for the superb job they did in organizing these conferences.

\section{Preliminaries}\label{Sec:pre}

\subsection{Notations and conventions}
The set of spherical symplectic periods of $(M,\omega)$ is denoted by 
\[
\Gamma_\omega=\{\langle [\omega],A\rangle\;|\;A\in\pi_2(M)\}.
\]
We say that a symplectic manifold $(M,\omega)$ is \textit{rational} if there is a number $p>0$ such that 
$\Gamma_\omega=p\cdot \Z$.  
 
\begin{df}\label{df:Nov}
\begin{enumerate}
    \item[(1)] The Novikov ring associated to $\Gamma_\omega$ is the ring of formal Laurent series given by 
\[
\Lambda=\bigg\{\sum_{i=1}^{\infty}a_iT^{\lambda_i}\;\big|\;a_i\in\C, \;\lambda_i\in\Gamma_\omega,  \;\forall C>0, \;\#\{i\in \N|a_i\neq 0, \lambda_i<C\}<\infty \bigg\}
\]
where $T$ is a formal variable. 
    \item[(2)] The valuation map on $\Lambda$ is the map
    \[
    \nu:\Lambda\to \Gamma_\omega,\quad \nu\big(\sum_{i=1}^{\infty}a_iT^{\lambda_i}\big)=\max\{-\lambda_i\;|\;a_i\neq 0\}.
    \]
\end{enumerate}
\end{df}

Denote 
\[
\Lambda_0:=\bigg\{\sum_ia_iT^{g_i}\in\Lambda\;\big|\;g_i\geq 0\;\forall i\bigg\}.
\]
Note that $\Lambda_0=\nu^{-1}((-\infty,0])$ and $\Lambda$ is the field of fractions of the ring $\Lambda_0$. 
We define the map $\exp:\Lambda_0\to \Lambda_0$ by the usual Taylor series
\[
\exp (a)=\sum_{k=1}^{\infty}\frac{a^k}{k!}.
\]
It is not difficult to see that this is a well-defined map. 

Here we remark that in this paper we do not grade the Novikov ring $\Lambda$ since we are going to use a simplified set-up where bulk-deformed Floer homology and big quantum homology will be viewed as ungraded theories. 

\begin{df} Let $V$ be a vector space over $\Lambda$, and $\ell:V\to \R\cup\{-\infty\}$ a function. The pair $(V,\ell)$ is called a non-Archimedean normed vector space if 
\begin{itemize}
    \item[(i)] $\ell(0)=-\infty$, and $\ell(c)\in\R$ for all nonzero $c\in V$; 
    \item[(ii)] $\ell(\alpha\cdot c)=\ell(c)+\nu(\alpha)$ for all $\alpha\in \Lambda$ and $c\in V$;
    \item[(iii)] $\ell(c_1+c_2)\leq\max\{\ell(c_1),\ell(c_2)\}$ for all $c_1,c_2\in V$.
\end{itemize}
\end{df}

\begin{rmk}\label{rmk:nonArchi}
There is a symbolic difference between our property (ii) in the above definition and the property (F2) in \cite[Definition~2.2]{UZ} because of the symbolic difference for the valuation map $\nu$. It is shown that the equality in the (iii) holds if $\ell(c_1)\neq\ell(c_2)$; see~\cite[Proposition~2.1]{EP} or~\cite{UZ}. 
\end{rmk}

\subsection{Big quantum homology ring}\label{subsec:bigquan}

In this subsection we will review the \textit{big quantum homology} which is closely related to the deformed spectral invarints. 

Given $A\in H_2(M;\Z)$, for homogeneous classes $a_i\in H_*(M;\Z)/{\rm torsion}, i=1\ldots,k$ with $k\geq 3$ which satisfy
\begin{equation}\label{e:Dformula}
\sum_{i=1}^k(2n-{\rm deg}(a_i))=2n+2\langle c_1(TM),A\rangle + 2k-6, 
\end{equation}
we denote ${\rm GW}_{0,k,A}(a_1,\ldots,a_k)$ the Gromov-Witten invariant by counting (in the appropriate virtual sense) the number of  $J$-holomorphic spheres in the class $A$ passing through the submanifolds $N_i$, where $J$ is a generic almost complex structure compactible with $\omega$ and the $N_i$ are generic cycles representing the classes $a_i$. See Ruan and Tian~\cite{RT}, Fukaya and Ono~\cite{FO} and Ruan~\cite{Ru}. We put $\gw_{0,k,A}(a_1,\ldots,a_k)=0$ unless (\ref{e:Dformula}) holds, and define
\[
    {\rm GW}_k(a_1,\ldots,a_k)=\sum_{A\in H_2(M,\Z)} {\rm GW}_{0,k,A}(a_1,\ldots,a_k)T^{\langle [\omega],A\rangle}. 
\]
Then, by the Gromov compactness theorem, one can extend the above definition linearly over $\Lambda$ to a $\Lambda$-module homomorphism 
\[
 {\rm GW}_k:H_*(M;\Lambda)^{\otimes k}\longrightarrow\Lambda.
\]

For each $\zeta\in   \oplus_{i=0}^{n-1}H_{2i}(M;\Lambda_0)$, we define the $\zeta$-deformed 
quantum multiplication on $H_*(M;\Lambda)$ as follows: for any $a,b\in H_*(M;\Lambda)$, the product
\[
*_\zeta: H_*(M;\Lambda)\otimes H_*(M;\Lambda)\to H_*(M;\Lambda)
\]
is given by the formula
\[\langle a*_\zeta b,c\rangle_{\rm PD_M} =\sum_{k=0}^\infty\frac{1}{k!}{\rm GW}_{k+3}(a,b,c,\underbrace{\zeta,\ldots,\zeta}_k)\]
for any $c\in H_*(M;\Lambda)$
where $\langle\cdot,\cdot\rangle_{\rm PD_M}$ denotes the Poincare duality. Alternatively, one may express this product in terms of a homogeneous basis as follows. Let $\{c_j\}_{j=1}^K$ be a homogeneous basis for $H_*(M;\Q)$ with dual basis $\{c^j\}$, namely, $c_i \circ c^j=\delta_i^j$ where $\circ$ denotes the Poincar\'{e} intersection pairing. Then the product $*_\zeta$ can be obtained by extending linearly from the following formula
\[
a*_\zeta b=\sum_{A\in H_2(M;\Z)}\sum_{k=0}^\infty\frac{1}{k!}\sum_{j=1}^K{\rm GW}_{0,k+3,A} (a,b,c_j,\underbrace{\zeta,\ldots,\zeta}_k)T^{\langle [\omega],A\rangle}c^j.    
\]
It is well-known that the product $*_\zeta$ is associative and supercommutative with a parity decomposition $H_*(M;\Lambda)=H_{\rm even}(M;\Lambda)\oplus H_{\rm odd}(M;\Lambda)$; see~\cite{KM} or~\cite[Chapter~11.5]{MS}. So we obtain a  ring
\[QH_*(M,\omega)_\zeta=\big(H_*(M;\Lambda),*_\zeta\big)\]
which obviously has the fundamental class $[M]$ as its unity, and which is called the \textit{$\zeta$-deformed homology ring} of $(M,\omega)$.

Following Ruan and Tian~\cite{RT}, we denote by ${\rm GW}_{0,l+k,A}^{\{1,\ldots,l\}}(a_1,\ldots,a_l,b_1,\ldots, b_k)$ the mixed Gromov-Witten invariant. This invariant formally counts (with appropriate signs) the tuples $(u,\vec{w})=(u,\{w_j\}_{j=1}^k)$ consisting of a $J$-holomorphic sphere $u:S^2\to M$ representing the class $A$ with $u(z_i)\in\alpha_i$ and $u(w_j)\in \beta_j$ for all $i,j$, where $\{z_i\}_{i=1}^l$ is a tuple of fixed pairwise distinct points on $S^2$ and $w_j\in S^2\setminus\{z_i\}_{i=1}^l$ are pairwise distinct marked points for all $j$. Here $\alpha_i$ and $\beta_j$ are generic representatives of the classes $a_i$ and $b_j$, respectively. This quantity is zero unless 
\begin{equation}\label{e:mixdim}
\sum_{i=1}^l(2n-\deg(a_i))+\sum_{j=1}^k(2n-\deg(b_j))=2n+2\langle c_1(TM),A\rangle+2k. 
\end{equation}
Using the splitting, multilinearity and symmetry properties of the mixed Gromov-invariants (cf.~\cite{RT} or~\cite[Chapter~11]{MS}), one can see that an $l$-fold  product is then obtained by extending linearly from the formula
\begin{equation}\label{e:expand}
a_1*_\zeta,\ldots,*_\zeta a_l=\sum_{A\in H_2(M;\Z)}\sum_{k=0}^{\infty}\frac{1}{k!}\sum_{j=1}^K{\rm GW}_{0,l+k+1,A}^{I}(a_1\ldots,a_l,c_j,\underbrace{\zeta,\ldots,\zeta}_k)T^{\langle [\omega],A\rangle}c^j    
\end{equation}
where $I=\{1,\ldots,l+1\}$, and  $\{c_j\}_{j=1}^K$ is a homogeneous basis for $H_*(M;\Q)$ with dual basis $\{c^j\}$.

As a vector space over $\C$, the big quantum homology $H_*(M;\Lambda)$ is isomorphic to $H_*(M;\C)\otimes_\C\Lambda$. Each element $a\in H_*(M;\Lambda)$ can be written as $a=\sum_{g\in\Gamma_\omega}a_gT^g$ with $a_g\in H_*(M;\C)$ and the property that for any $C>0$, 
$$\#\{g\in\Gamma_\omega\;|\;a_g\neq 0, g<C\}<\infty.$$
This latter finiteness condition gives rise to a  valuation function $I_\nu:H_*(M;\Lambda)\to\Gamma_\omega$ defined by
\begin{equation}\label{e:val}
I_\nu(a)=\max\{-g|a_g\neq 0\}. 
\end{equation}

There exists a pairing $\Delta:H_*(M;\Lambda)\times H_*(M;\Lambda)\to \Lambda$ defined by \[
\Delta\bigg(\sum a_gT^g,\sum b_hT^h\bigg)=\sum a_g\circ b_h\cdot T^{g+h}
\]
where $\circ$ stands for the Poincar\'{e} intersection pairing.
It is easy to verify that the pairing is well-defined, and nondegenerate in the sense that $\Delta(a,b)=0$ for all $b$ then $a=0$. Then, by taking  coefficients of the zero-order term in $\Delta(a,b)$ with respect to the formal variable $T$, one can define a $\C$-valued  nondegenerate paring ${\textstyle\prod}:H_*(M;\Lambda)\times H_*(M;\Lambda)\to \C$ by
\begin{equation}\label{e:pair}
\prod\bigg(\sum a_gT^g,\sum b_hT^h\bigg)=\sum a_g\circ b_{-g} 
\end{equation}
(see~\cite[p. 1359]{Us} or~\cite[Chapter~15]{FOOO}).

\subsubsection{Deformed quantum cuplength}
Schwarz~\cite[Section~2.2]{Sc2} generalized the notion of classical cuplength to small quantum cohomology, called \textit{quantum cuplength}, and employed it to obtain some estimates for the number of 1-periodic solutions of Hamiltonian equations. Now we further generalize this invariant to our deformed quantum homology as follows.  

Given $\zeta\in  \oplus_{i=0}^{n-1}H_{2i}(M;\Lambda_0)$, for homology classes $\alpha_1,\ldots,\alpha_k\in H_*(M;\Lambda)$, using the splitting property of the Gromov invariants,  the $k$-fold big quantum product can be represented as a formal series in $H_*(M;\Lambda)$
\[
\alpha_1*_\zeta\cdots *_\zeta\alpha_k=\sum (\alpha_1*_\zeta\cdots *_\zeta\alpha_k)_g T^g.
\]

\begin{df}\label{df:bcl}
The $\zeta$-deformed quantum cuplength for $g\in\Gamma_\omega$ is defined as
\[
\bcl(g)=\max\big\{k+1\;|\;\exists\;a_i\in H_{*<2n}(M;\C)\;\hbox{such that}\;(a_1*_\zeta\cdots *_\zeta a_k)_g\neq 0\big\}     
\]
if there exist homology classes $a_i\in H_{*<2n}(M;\C)$ such that $(a_1*_\zeta\cdots *_\zeta a_k)_g\neq 0$, and we let $\bcl(g)=0$ if 
$(a_1*_\zeta\cdots *_\zeta a_k)_g=0$ for arbitrary nonzero classes $a_1,\cdots,a_k\in H_*(M;\C)$. 
\end{df}

\begin{rmk}
Note that $g>0$ if $\bcl(g)>0$. From the properties of Gromov-Witten invariants--in particular, considering terms with no $\zeta$ insertions and no $J$-holomorphic spheres (i.e., $A=0$), one finds that 
\[
a_1*_\zeta\cdots *_\zeta a_k=a_1\cap\cdots \cap a_k+\sum_{0<g\in\Gamma_\omega} b_g T^g
\]
for any $a_1,\ldots,a_k\in H_*(M;\C)$ where $\cap$ stands for the usual homology intersection product. Therefore, the classical cuplength coincides with $\bcl(0)$, and $\bcl(g)=0$ for all $g\in\{\gamma\in\Gamma_\omega|\gamma<0\}$. 

\end{rmk}

\subsubsection{Morse theoretical big quantum homology}

The $\zeta$-deformed quantum homology ring  $QH_*(M,\omega)_\zeta$ has a Morse theoretical interpretation which we describe as follows. Let $\{\beta_i\}_{i=1}^m$ be a basis of $\oplus_{i=0}^{n-2}H_{2i}(M;\Q)$. Thom's theorem~\cite[Th\'{e}or\`{e}me II.29]{Th} implies that  for each $\beta_i$, one may choose a representative given by an embedding 
\begin{equation}\label{e:delta}
f_i:\Delta_i\longrightarrow M
\end{equation}
 (i.e., $f_{i*}[\Delta_i]=\beta_i$). Here, each $\Delta_i$, $i=1,\ldots, m$, is a smooth closed oriented manifold of dimension $2d(i)$. For simplicity, hereafter we denote
$$\delta(I):=\sum^k_{j=1}(2n-2d(i_j)-2).$$

Given $\zeta\in\oplus_{i=0}^{n-1}H_{2i}(M,\Lambda_0)$, we let $\theta\in \Omega^2(M;\Lambda_0)$ be a closed Novikov-ring-valued two-form which is Poincar\'{e} dual to the degree-$(2n-2)$ component of $\zeta$. Then we have
\begin{equation}\label{e:zeta}
\zeta-{\rm PD_M}[\theta]=\sum_{i=1}^ma_i\beta_i\in \bigoplus_{i=0}^{n-2}H_{2i}(M;\Lambda_0) 
\end{equation}
where $a_i\in\Lambda_0$ for all $i\in\{1,\ldots,m\}$. For any $I=(i_1,\ldots,i_k)\in \{1,\ldots, m\}^k$, we set
\[
a_I=a_{i_1}\ldots a_{i_k}. 
\]

Let $f\in C^\infty (M,\R)$ be a Morse function. Given $\zeta\in   \oplus_{i=0}^{n-1}H_{2i}(M;\Lambda_0)$, we choose a generic Riemannian metric $\rho$ on $M$ such that the stable and unstable manifolds $W^s(f),W^u(f)$ for  $x,y\in {\rm Crit}(f)$ are transverse both to each other and to the maps $f_i:\Delta_i\to M$. A grading 
\[{\rm ind}_f:{\rm Crit}(f)\longrightarrow \{0,\ldots, 2n\}\]
is given by the Morse index. Denote
\[
\mathcal{M}_{x,y}(f,\rho)=\big\{\gamma\in C^\infty(\R, M)\;\big|\;\dot{\gamma}+{\rm grad}_\rho f(\gamma)=0,\;\gamma(-\infty)=x,\;\gamma(+\infty)=y\big\}. 
\]
Since $\dim \widehat{\mathcal{M}}_{x,y}(f,\rho)={\rm ind}_f(x)-{\rm ind}_f(y)$ and $\widehat{\mathcal{M}}_{x,y}(f,\rho)$ carries a free $\R$-action, if ${\rm ind}_f(x)={\rm ind}_f(y)+1$ then the moduli space 
\[\mathcal{M}_{x,y}(f,\rho)=\widehat{\mathcal{M}}_{x,y}(f,\rho)/\R\]
is a compact 0-dimensional manifold. 
Equipping unstable manifolds with  arbitrary orientations, together with the orientation of $M$, gives rise to orientations on $\widehat{\mathcal{M}}_{x,y}(f,\rho)$, which, compared with the canonical orientation by the flow, assign a sign to every element $[\gamma]\in \mathcal{M}_{x,y}(f,\rho)$ provided that ${\rm ind}_f(x)={\rm ind}_f(y)+1$. Then we obtain a Morse complex
\[
CM_k(f;\Z)=\sum_{\substack{p\in{\rm Crit}(f),\\{\rm ind}_f(p)=k}}\Z \langle p \rangle
\]
with differential \[\partial:CM_{k+1}(f;\Z)\to CM_k(f;\Z), 
\quad
\partial x=\sum_{{\rm ind}_f(x)={\rm ind}_f(y)+1}\sharp_{\rm alg} \mathcal{M}_{x,y}(f,\rho)\cdot y
\]
where $\sharp_{\rm alg} \mathcal{M}_{x,y}(f,\rho)\in\Z$ is the number of the negative gradient flowlines counted with sign. The homology of the complex $(CM(f;\Z),\partial)$ is called  \textit{Morse homology} of $(f,\rho)$ with integer coefficients, naturally isomorphic to the singular homology  $H_*(M;\Z)$. 

We now consider a complex with $\Lambda$-coefficients
\[
CM_*(f;\Lambda)=CM_*(f;\Z)\otimes_{\Z}\Lambda
\]
with differential $\partial^{f}:CM(f;\Lambda)\to CM(f;\Lambda)$ given by $\partial\otimes_{\Z} {\rm id}$. The homology of this complex is independent of the choice of $\rho$, and naturally isomorphic to $H_*(M;\Lambda)$. We denote it by $HM_*(f;\Lambda)$.

The $\zeta$-deformed  
quantum multiplication on $H_*(M;\Lambda)$ has the following Morse theoretical description. Let $(f,\rho^0), (g,\rho^1), (h,\rho^2)$ be three Morse-Smale pairs as above. 
Let $J$ be an almost complex structure on $M$ which smoothly depends on $z\in\C P^1$. Given $A\in\pi_2(M)$, we consider the space of $J$-holomorphic spheres
\[
\mathcal{M}_A(J)=\{u:\C P^1\to M\;|\;du(z)\circ j=J(z,u(z))du(z),\;[u]=A\}. 
\]
Now we define
\[
\mathcal{M}_k(A,J)=\big\{(u,\vec{z})\in \mathcal{M}_A(J)\times (\C P^1)^k\big\},
\]
and denote by $\overline{\mathcal{M}}_k(A,J)$ the compactification of this space with evaluation maps $ev_i\;(1\leq i\leq k)$ at the $i$-th marked point. For the marked space $(\C P^1,0,-1,\infty)$, each element $(u,\vec{z})$ in $\mathcal{M}_k(A,J)$ carries the natural evaluation maps $ev_0,ev_{-1},ev_\infty$ given by $ev_0(u,\vec{z})=u(0)$, $ev_{-1}(u,\vec{z})=u(-1)$ and $ev_\infty(u,\vec{z})=u(\infty)$. For $p\in\crit (f),q\in\crit (g),r\in\crit(h)$, let $\iota^0:W^u(p)\to M$, $\iota^1:W^u(q)\to M$, $\iota^2:W^s(r)\to M$ denote the inclusion maps. The fiber product
\[
\overline{\mathcal{M}}_k(A,J)_{(ev_0,ev_{-1},ev_\infty,ev_1,\ldots,ev_k)}\times _{(\iota^0,\iota^1,\iota^2,f_{i_1},\ldots,f_{i_k})}W^u(p)\times W^u(q)\times W^s(r)\times \Delta_{i_1}\times\ldots\times \Delta_{i_k}
\]
has a Kuranishi structure (cf.~\cite{FO,FOOO1,FOOO2}) with corners of dimension
$$\ind_f(p)+\ind_g(q)-\ind_h(r)+2\langle c_1(TM),A\rangle-2n-\delta(I).$$ 
Then where the associated multisections are denoted by $\mathsf{s}_{P,I}$, the 
$\zeta$-deformed quantum product
\[*_\zeta^{\rm Morse}:CM_*(f;\Lambda)\times CM_*(g;\Lambda)\longrightarrow CM_*(h;\Lambda)\]
is defined by extending linearly from
$p*_\zeta^{\rm Morse}q$ for $p\in\crit(f)$ and $q\in \crit(g)$ given by 
\[
p*_\zeta^{\rm Morse}q=\sum_{k=0}^\infty
\frac{1}{k!}\sum_{\substack{A\in\pi_2(M),\\r\in\crit(h)}}\sum_{\substack{I\in\{1,\ldots,m\}^k,\\\ind_f(p)+\ind_g(q)-\ind_h(r)=2n+\delta(I)-2c_1(A)}}|\mathsf{s}_{P,I}^{-1}(0)|\exp\big(\int_A\theta\big)a_IT^{\int_A\omega}r
\]
where $|\mathsf{s}_{P,I}^{-1}(0)|$ denotes the sum of the rational multiplicities of the points of the zero-dimensional vanishing locus of 
$\mathsf{s}_{P,I}$. On homology level (after identifying the Morse homology with the $\zeta$-deformed homology), the map $*_\zeta^{\rm Morse}$ induces a map from $H_*(M;\Lambda)\otimes H_*(M;\Lambda)$ to $H_*(M;\Lambda)$ by sending $a\otimes b$ to
\[
\sum_{k=0}^\infty\frac{1}{k!}\sum_{j=1}^K\sum_{A\in\pi_2(M)}\sum_{I\in\{1,\ldots,m\}^k}{\rm GW}_{0,k+3,A}(a,b,c_j,a_{i_1}\beta_{i_1},\ldots,a_{i_k}\beta_{i_k})\exp\big(\int_A\theta\big)T^{\int_A\omega}c^j
\]
where$\{c_j\}_{j=1}^K$ is a homogeneous basis for $H_*(M;\Q)$ with dual basis $\{c^j\}$. 
By (\ref{e:zeta}), we see that $\zeta={\rm PD_M}[\theta]+\sum^m_{i=1}a_i\beta_i$. It follows from the divisor axiom of Gromov-Witten invariants~\cite[Proposition~7.5.7]{MS} that the last formula is 
\[
\sum_{k=0}^\infty\frac{1}{k!}\sum_{j=1}^K\sum_{A\in\pi_2(M)}{\rm GW}_{0,k+3,A}(a,b,c_j,\underbrace{\zeta,\ldots,\zeta}_k)T^{\int_A\omega}c^j
\]
which coincides with the $\zeta$-deformed quantum multiplication $*_\zeta$ on $H_*(M;\Lambda)$. 

 \begin{rmk}
In the construction of big quantum homology ring,  one can avoid using Kuranishi structures by imposing the assumption that $(M,\omega)$ is {\it strongly semipositive} (i.e., for any $A\in\pi_2(M)$ with $2-n\leq\langle c_1(TM),A\rangle<0$ we have $\int_A\omega\leq 0$) and instead follow the constructions of Hofer and
Salamon~\cite{HS}. See~\cite[Section~3.2.2]{Us}. This will not affect our applications in Section~\ref{sect:app} since those examples listed there meet the latter condition. The generalization to all closed symplectic manifolds requires more subtle technical arguments. However, these arguments are less involved than the technical hurdles overcome in the construction of $A_\infty$–algebras associated to appropriate Lagrangian submanifolds~\cite{FOOO1}; therefore one
can adapt the methods from that work or~\cite{FOOO2}.
 
 \end{rmk}

\subsection{Deformed Hamiltonian Floer homology}
Following Usher~\cite{Us} (see also~\cite{FOOO}), we review the construction of deformed Hamiltonian Floer homology with additional algebraic structures. 

\subsubsection{Deformed Floer complexes}
Let $\rH=C^\infty(S^1\times M,\R)$. For $H\in\rH$, we denote by $\{\varphi_H^t\}_{t\in[0,1]}$ the Hamiltonian isotopy of $H$ which is given by integrating the time-dependent vector field $X_{H_t}$, where $H_t=H(t,\cdot)$ and $X_{H_t}$ is determined uniquely by $-dH_t=\omega(X_{H_t},\cdot)$. Denote by ${\rm Per}(H)$ the set of contractible $1$-periodic orbits of the Hamiltonian flow of $H$. We say that $H$ is \textit{normalized} if $\int^1_0H_t\omega^n=0$ for all $t\in S^1$.

We consider the set $\mathcal{L}_{pt}(M)$ of all pairs $(x,\bar{x})$ where $x:S^1\to M$ is a loop and $\bar{x}:D^2\to M$ a disc with $\bar{x}|_{\partial D^2}=x$, called a \textit{capping} of $x$. We denote by $\pi_2(x)$ the set of homotopy classes of such $\bar{x}'s$.  We identify $(x,\bar{x})$ and $(x',\bar{x}')$ if $x=x'$ and $\bar{x}$ is homotopic to $\bar{x}'$ relative to the boundary $x$. 

Given $H\in\rH$, we consider the action functional 
$\ah:\mathcal{L}_{pt}(M)\longrightarrow \R$
given by
\[
\ah(x,\bar{x})=-\int_{D}\bar{x}^*\omega+\int^1_0H(t,x(t))dt.
\]
Let ${\rm Crit}(\ah)$ denote the set of critical points of $\ah$. It is easy to see that 
\[{\rm Crit}(\ah)=\big\{(x,\bar{x})\in\mathcal{L}_{pt}(M)\;\big|\;\dot{x}(t)=X_{H_t}(x(t))\big\}.\]
For each $H\in\rH$, we define the \textit{action spectrum} of $H$ as 
\[
{\rm Spec}(H)=\big\{\ah(x,\bar{x})\;\big|\;(x,\bar{x})\in {\rm Crit}(\ah)\big\}. 
\]
It is shown that  for any $(M,\omega)$ and for any $H\in\rH$, the set ${\rm Spec}(H)$ has measure zero in $\R$ (see Oh~\cite{Oh} for the general case  and  Schwarz~\cite{Sc1} for aspherical $(M,\omega)$). Moreover, for any one-parameter smooth family of normalized periodic Hamiltonians $H^s:S^1\times M\to \R, s\in[0,1]$ with
\[
\varphi_{H^s}^0\equiv id,\quad
\varphi_{H^s}^1\equiv \varphi_{H^0}^1\quad \hbox{for all}\;s\in[0,1], 
\]
we have ${\rm Spec}(H^s)={\rm Spec}(H^0)$;  see~\cite[Corollary 10.3]{FOOO}. 

We say that $H$ is \textit{nondegenerate} if for each $p\in{\rm Fix}(\varphi_H^1)$, $1$ is not the eigenvalue of the linearization $d_p\varphi_H^1:T_pM\to T_pM$. Clearly, for a nondegenerate Hamiltonian $H$, the cardinality of the set ${\rm Per}(H)$ of one-periodic solutions of $\dot{x}(t)=X_{H_t}(x(t))$ is finite. In this case, we  have an integer grading 
\[\mu_H:{\rm Crit}(\ah)\longrightarrow\Z\]
induced by the Conley-Zehnder index. More specifically, in this paper, the Conley-Zehnder index is taken to be the negative of the one defined in~\cite{CZ}. Consequently, for a $C^2$-small Morse function $f$, each constant Hamiltonian one-periodic orbit $q$ (together with  its constant capping $\bar{q}$) satisfies $\mu_H(q,\bar{q})=n-{\rm ind}_q(f)$, where ${\rm ind}_q(f)$ denotes the Morse index of $f$ at  $q$.  

We introduce an equivalence relation $\sim$ on ${\rm Crit}(\ah)$ such that $(x_1,\bar{x}_1)\sim (x_2,\bar{x}_2)$ if and only if $x_1=x_2$ and  \[\int_{D^2}\bar{x}_1^*\omega=\int_{D^2}\bar{x}_2^*\omega.\] 
Denote by $\mathcal{O}(H)$ the set of such equivalence classes. The equivalence class of $(x,\bar{x})\in{\rm Crit}(\ah)$ is denoted by $[x,\bar{x}]$. Then we have the maps 
\[{\rm Crit}(\ah)\longrightarrow\mathcal{O}(H)\longrightarrow{\rm Per}(H)\]
by $(x,\bar{x})\mapsto [x,\bar{x}]$ and $[x,\bar{x}]\mapsto x$. 

We define the Floer complex $CF_*(H)$ of a nondegenerate Hamiltonian $H\in\rH$ as the set of elements having the formal sum
\begin{equation}\label{e:chain}
\sum_{[x,\bar{x}]\in \mathcal{O}(H)} \alpha_{[x,\bar{x}]}[x,\bar{x}],\; \alpha_{[x,\bar{x}]}\in\C    
\end{equation}
with the property that for any $C>0$, 
\[\#\big\{[x,\bar{x}]|\alpha_{[x,\bar{x}]}\neq 0, \ah([x,\bar{x}])>C\big\}<\infty.\]
The Novikov field $\Lambda$ acts on 
$CF_*(H)$ as follows: for each $\lambda\in \Gamma_\omega$, we pick an arbitrary class $A_\lambda\in\pi_2(M)$ satisfying $\langle [\omega],A_\lambda\rangle=\lambda$ and define the action
\[
\big(\sum a_iT^{\lambda_i}\big)\cdot\big(\sum \beta_{[x,\bar{x}]}[x,\bar{x}]\big)=\sum a_i\beta_{[x,\bar{x}]}[x,\bar{x}\sharp A_{\lambda_i}].
\]
It is easy to verify that this action is well-defined. Consequently, $CF_*(H)$ is a vector space over $\Lambda$ of finite dimension equal to $\sharp{\rm Per}(H)$. More precisely, for each $x\in {\rm Per}(H)$, if we pick a lifting $[x,\bar{x}]\in\mathcal{O}(H)$, then any element $c$ of $CF_*(H)$ is uniquely written as a finite sum 
\begin{equation}\label{e:sum}
c=\sum_{x\in {\rm Per}(H)} a_x[x,\bar{x}],\;a_x\in\Lambda.  
\end{equation}

\begin{df} The filtration function $\ell_H$ on $CF_*(H)$ is defined by
\[
\ell_H(c)=\max\big\{\ah([x,\bar{x}])\;\big|\;\alpha_{[x,\bar{x}]}\neq 0\big\}
\]
for any $c=\sum \alpha_{[x,\bar{x}]}[x,\bar{x}]$ of the form (\ref{e:chain}).
\end{df}

Equivalently, after fixing a lifting $x\to[x,\bar{x}]$ for each $x\in{\rm Per}(H)$, one can define the value of $\ell_H$ at $c$ given in the form (\ref{e:sum}) by
\[\ell_H(c)=\max\{\ah([x,\bar{x}])+\nu(a_x)|a_x\neq 0\}
\]
where $\nu$ is the valuation map on $\Lambda$ (cf.  Definition~$\ref{df:Nov}$(2)). The function $\ell_H$ gives rise to a filtration on $CF_*(H)$ by
\[
CF_*^\lambda(H)=\{c\in CF_*(H)\;|\;\ell_H(c)\leq \lambda\}\quad\hbox{for any}\;\lambda\in\R.
\]

\subsubsection{Gradings}
As a vector space over $\Lambda$, $CF_*(H)$ carries a natural $\Z_2$-grading which we define as follows. 

\[
CF_0(H)=\bigoplus_{\substack{x\in{\rm Per}(H)\\n-\mu_H(x,\bar{x})=0\;{\rm mod}\;2}} \Lambda[x,\bar{x}],
\]
\[
CF_1(H)=\bigoplus_{\substack{x\in{\rm Per}(H)\\n-\mu_H(x,\bar{x})=1\;{\rm mod}\;2}} \Lambda[x,\bar{x}].
\]
Note that the parity of $\mu_H(x,\bar{x})$ depends only on the orbit $x\in{\rm Per}(H)$ and not on its capping $\bar{x}$. Indeed,  for any two cappings $w$ and $u$ 
of $x$, we have
\[
\mu_H(x,w)-\mu_H(x,u)=2\langle c_1(TM),[w\# 
\bar{u}]\rangle,
\]
where $w\# \bar{u}:S^2\to M$ denotes the disc $u$ with opposite orientation so that $w\# 
\bar{u}$ is a $2$-sphere.

\subsubsection{Maslov index of homotopy classes of cylinders}

Let $H_{\pm}\in\rH$ and $x_{\pm}\in{\rm Per}(H_{\pm})$. We denote by $\pi_2(x_-,x_+)$ the set of path components of the space of continuous maps $u:[-\infty,+\infty]\times S^1\to M$ satisfying $u(\{\pm\infty\}\times S^1)=x_\pm$. 

Given $H_i\in\rH$ and $x_i\in{\rm Per}(H_i),i=0,1,2$, there is the natural gluing map
\[
\pi_2(x_0,x_1)\times \pi_2(x_1,x_2)\longrightarrow \pi_2(x_0,x_2)
\]
\[(C_0,C_1)\longmapsto C_0\sharp C_1\]
where $C_0\sharp C_1$ is the equivalence class of a gluing map $w$ of any two representatives $u,v$ of $C_0$ and $C_1$. More precisely, let $u,v:[-\infty,+\infty]\times S^1\to M$ be two continuous maps which represent $C_0$ and $C_1$ respectively, and define $w:[-\infty,+\infty]\times S^1\to M$
by putting 
$w(s,t)=u(-\log (-s),t)$ for $s\leq0$ and 
$w(s,t)=v(\log (s),t)$ for $s\geq 0$.  Here $\log:[0,\infty]\to[-\infty,\infty]$ is an extension of the natural logarithm. 

Similarly, one can define a concatenation map
\[\pi_2(x_0)\times\pi_2(x_0,x_1)\longrightarrow\pi_2(x_1),\quad (\omega,C)\longmapsto \omega\sharp C.\]
For $x_{\pm}\in{\rm Per}(H_{\pm})$, we now use the Conley-Zehnder index to define a map $\mu$ on $\pi_2(x_-,x_+)$ as follows. For $C\in \pi_2(x_-,x_+)$, we pick an arbitrary representative $u:[-\infty,+\infty]\times S^1\to M$ of $C$, and pick any disc $w$ such that $w|_{\partial D^2}=x_-$ (note that $x_-$ is contractible by our assumption). We then glue the negative end of $u$ to the boundary of $w$ along $x_-$ to obtain a disc $w\sharp u:D^2\to M$ with $w\sharp u|_{\partial D^2}=x_+$. Then we define
\[
\mu(C)=\mu_{H_+}(x_+,w\sharp u)-\mu_{H_-}(x_-,w).
\]
Clearly, $\mu(C)$ is independent of the choice of $u$ and $w$, and has the property
\[\mu(C_0\sharp C_1)=\mu(C_0)+\mu(C_1)\quad \hbox{for all}\;  C_0\in \pi_2(x_0,x_1), C_1\in \pi_2(x_1,x_2).\]

\subsubsection{Deformed Floer boundary maps}

\begin{df}
We say that $H\in\rH$ is strongly nondegenerate if for each $p\in{\rm Fix}(\varphi_H^1)$, $1$ is not an eigenvalue of the linearization $d_p\varphi_H^1:T_pM\to T_pM$, and each submanifold $f_i(\Delta_i)$, $i=1,\ldots, m$, does not intersect the set $\{\varphi_H^t(p)|t\in[0,1]\}$.
\end{df}

By the Sard-Smale theorem, one can show that the space of strongly nondegenerate Hamiltonians is residual in $\rH$ in the $C^l$-topology for all $2\leq l\leq\infty$. See~\cite[Appendix A]{Us}.  

Let $J=\{J_t\}_{t\in S^1}$ be an $S^1$-family of $\omega$-compatible almost complex structures on $M$, and $H\in\rH$ a strongly nondegenerate Hamiltonian. Given $x_\pm\in{\rm Per}(H)$, we consider the solution $u:\R\times S^1\to M$ of the Floer equation:
\[
\overline{\partial}_{J,H}u:=\partial_su+J(t,u(s,t))\big(\partial_t u-X_H(t,u(s,t))\big)=0
\]
with the asymptotic boundary condition $u(s,t)=x_\pm(t)$ for $s\to\pm \infty$. Then $u$ extends continuously to a map $u:[-\infty,\infty]\times S^1\to M$ with $u(\pm\infty,t)=x_\pm(t)$, which represents some class $C\in\pi_2(x_-,x_+)$.  We denote by $\widehat{\mathcal{M}}(x_-,x_+,C;H,J)$ the moduli space of such solutions with
\[\int_{\R\times S^1}\big|\partial_s u\big|^2_{J_t}dsdt<\infty\]
where $|\cdot|_{J_t}$ is the norm associated to an $S^1$-family of metrics induced by $J$ for all $t$.  

Consider the product space
\[\widehat{\mathcal{M}}_k(x_-,x_+,C;H,J)=\widehat{\mathcal{M}}(x_-,x_+,C;H,J)\times (\R\times S^1)^k.\]
Note that $\R$ acts on each factor by $s$-translation, we have a quotient
\[\mathcal{M}_k(x_-,x_+,C;H,J)=\widehat{\mathcal{M}}_k(x_-,x_+,C;H,J)/\R.\] 
Using the constructions of Kuranishi structures  from~\cite[Theorem~19.44]{FO}, one can show that the 
compactification of $\mathcal{M}_k(x_-,x_+,C;H,J)$, denoted by $\overline{\mathcal{M}}_k(x_-,x_+,C;H,J)$, admits an oriented Kuranishi structure with corners; see~\cite[Section~3.2.2]{Us}. Futhermore, one has evaluation maps
\[
\operatorname{ev}_1,\ldots, \operatorname{ev}_k:\overline{\mathcal{M}}_k(x_-,x_+,C;H,J)\longrightarrow M
\]
given by evluating at the $i$-th marked point, i.e. $\operatorname{ev}_i([u,z_1,\ldots,z_k])=u(z_i)$ for all $[u,z_1,\ldots,z_k]$ in the dense subset $\mathcal{M}_k(x_-,x_+,C;H,J)$ and  $i\in\{1,\ldots,k\}$. 
Given an ordered set $I=(i_1,\ldots,i_k)$ with each $i_j\in\{1,\ldots,m\}$,  we consider the fiber product
\[\widetilde{\mathcal{M}}(x_-,x_+,C;\Delta_I):=\overline{\mathcal{M}}(x_-,x_+,C;H,J)_{\operatorname{ev}_1,\ldots,\operatorname{ev}_k}\times_{f_{i_1},\ldots,f_{i_k}}(\Delta_{i_1}\times\cdots\times \Delta_{i_k}).\]
It follows from~\cite[Appendix A1]{FOOO1} or  \cite[Section~3]{FOOO2} that there is an oriented Kuranishi structure with corners on $\widetilde{\mathcal{M}}(x_-,x_+,C;\Delta_I)$ which has the (virtual) dimension
\[
\dim \widetilde{\mathcal{M}}(x_-,x_+,C;\Delta_I)=\mu(C)-\sum^k_{j=1}(2n-2d(i_j)-2)-1.
\]
When $\mu(C)=\delta(I)+1$, we let $\mathsf{s}_{C,I}$ denote the multisection associated to Kuranishi structure on $\widetilde{\mathcal{M}}(x_-,x_+,C;\Delta_I)$, and denote $|\mathsf{s}_{C,I}^{-1}(0)|$ the sum of the rational multiplicities of the points of the zero-dimensional vanishing locus of 
$\mathsf{s}_{C,I}$.

Following~\cite[Proposition~3.5]{Us} (see also~\cite[Chapter~6]{FOOO}), we will deform the standard Floer differential by a general homology class $\zeta\in\oplus_{i=0}^{n-1}H_{2i}(M;\Lambda_0)$.   For any $[x_-,\bar{x}_-],[x_+,\bar{x}_+]\in\mathcal{O}(H)$, we denote
\[
\mathsf{n}_{(H,J);k}\big([x_-,\bar{x}_-],[x_+,\bar{x}_+]\big)=\sum_{\substack{C\in \pi_2(x_-,x_+)\\ [x_+,\bar{x}_+]=[x_+,\bar{x}_-\sharp C]}}\sum_{\substack{I\in \{1,\ldots, m\}^k\\ \mu(C)=\delta(I)+1}}\frac{1}{k!}|\mathsf{s}_{C,I}^{-1}(0)|\exp\big(\int_C\theta\big)a_I.
\]
Here $\bar{x}_-\sharp C$ stands for the concatenation of $\bar{x}$ and any representative $u$ of the class $C$. Clearly, the equivalence class $[x_+,\bar{x}_-\sharp C]$ does not depend on the choice of $u$. Now we define the map $\partial_\zeta^{H,J}: CF_*(H)\to CF_*(H)$
by extending $\Lambda$-linearly from
\[
\partial_\zeta^{H,J}[x_-\bar{x}_-]=\sum_{[x_+,\bar{x}_+]\in\mathcal{O}(H)}\sum_{k=0}^\infty \mathsf{n}_{(H,J);k}\big([x_-,\bar{x}_-],[x_+,\bar{x}_+]\big)[x_+,\bar{x}_+].
\]
Then Propositions~3.5 and 3.6 in~\cite{Us} imply the following. 
\begin{prop}
For any $\zeta\in\oplus_{i=0}^{n-1}H_{2i}(M;\Lambda_0)$,  the $\partial_\zeta^{H,J}$ as above is a well-defined endomorphism and satisfies
\[\big(\partial_\zeta^{H,J}\big)^2=0.\]
\end{prop}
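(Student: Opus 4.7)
The plan is to verify the two assertions in turn. The key novelty over the classical Floer case is the exponential weight $\exp(\int_C\theta)$ and the $a_I$-factors coming from the bulk deformation $\zeta$; both lie in $\Lambda_0$, so the construction stays within the same Novikov ring as the undeformed theory.

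First I would verify well-definedness of $\partial_\zeta^{H,J}$. For fixed $[x_-,\bar{x}_-],[x_+,\bar{x}_+]\in\mathcal{O}(H)$, only one relative class $C\in\pi_2(x_-,x_+)$ satisfies $[x_+,\bar{x}_-\sharp C]=[x_+,\bar{x}_+]$, so the $k$-th summand in $\mathsf{n}_{(H,J);k}$ reduces to a finite sum over those multi-indices $I\in\{1,\ldots,m\}^k$ with $\mu(C)=\delta(I)+1$. The rational count $|\mathsf{s}_{C,I}^{-1}(0)|$ is finite from the zero-dimensional Kuranishi structure, and since $\theta$ and each $a_i$ lie in $\Lambda_0$, the full coefficient is again an element of $\Lambda_0$. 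The global Novikov finiteness condition then follows from Gromov--Floer compactness: for any bound $E\in\R$, only finitely many classes $C$ with $\int_C\omega\leq E$ support nonempty perturbed moduli spaces, so only finitely many $[x_+,\bar{x}_+]$ with action above a prescribed threshold receive a nonzero contribution. Extending $\Lambda$-linearly makes $\partial_\zeta^{H,J}$ an endomorphism of $CF_*(H)$.

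For the identity $\big(\partial_\zeta^{H,J}\big)^2=0$ I would analyze the codimension-one boundary of the compactified one-dimensional moduli space $\widetilde{\mathcal{M}}(x_-,x_+,C;\Delta_I)$, i.e.\ those strata with $\mu(C)=\delta(I)+2$. Standard Floer gluing and compactness identify two types of boundary pieces: (i) broken Floer trajectories of the form $\widetilde{\mathcal{M}}(x_-,y,C_1;\Delta_{I_1})\times\widetilde{\mathcal{M}}(y,x_+,C_2;\Delta_{I_2})$ with $C=C_1\sharp C_2$ and $(I_1,I_2)$ a shuffle of $I$; and (ii) strata arising from a $J$-holomorphic sphere bubbling off at some marked point. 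The usual shuffle combinatorics, combined with the $\tfrac{1}{k!}$ weighting and the multiplicativity $\exp(\int_C\theta)=\exp(\int_{C_1}\theta)\exp(\int_{C_2}\theta)$ under gluing, repackages the signed count of type~(i) strata exactly as the matrix coefficient of $\partial_\zeta^{H,J}\circ\partial_\zeta^{H,J}$. Since the total signed boundary of a compact oriented one-manifold vanishes, the desired identity will follow once the type~(ii) strata are shown to sum to zero.

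The main obstacle is precisely this cancellation of sphere-bubbling contributions. Two mechanisms handle them: when a bubble carries a divisorial $(2n{-}2)$-dimensional constraint, the divisor axiom of Gromov--Witten theory applied to $\theta$ absorbs the bubble into the exponential weight $\exp(\int_C\theta)$, which is exactly why $\theta$ is isolated as the degree-$(2n{-}2)$ component of $\zeta$; for bubbles carrying constraints from the lower-degree part $\sum a_i\beta_i$, a dimension count together with a reflection-of-marked-points symmetry forces the strata to occur in pairs of opposite orientation. A rigorous implementation rests on the coherent Kuranishi multisection framework of~\cite{FO,FOOO1,FOOO2} and is modeled directly on~\cite[Propositions~3.5 and~3.6]{Us}, so I would invoke that machinery as a black box and restrict my own verification to checking that the Morse-theoretic signs, gradings, and $\Lambda_0$-coefficients remain consistent after incorporating the deformation parameter $\zeta$.
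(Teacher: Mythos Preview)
The paper does not prove this proposition at all; it simply records it as a consequence of \cite[Propositions~3.5 and~3.6]{Us}. Your proposal is therefore far more detailed than the paper's own treatment, and since you end by invoking exactly those same propositions from Usher's paper as a black box, your approach is in substance identical to the paper's: cite \cite{Us}.

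That said, your outline contains one technical slip worth flagging. You write that for fixed $[x_-,\bar{x}_-],[x_+,\bar{x}_+]\in\mathcal{O}(H)$, ``only one relative class $C\in\pi_2(x_-,x_+)$ satisfies $[x_+,\bar{x}_-\sharp C]=[x_+,\bar{x}_+]$''. This is false: the equivalence relation defining $\mathcal{O}(H)$ identifies cappings with the same symplectic area, so the condition only pins down $\int_C\omega$, not the homotopy class $C$ itself. Infinitely many $C$ may satisfy it. The correct statement is that Gromov--Floer compactness ensures only finitely many such $C$ carry nonempty moduli spaces, which is what you say afterwards anyway; so the conclusion survives, but the premise as written is wrong.

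Your account of why sphere bubbles do not contribute is also a bit off. The ``reflection-of-marked-points symmetry'' you invoke is not the mechanism used in \cite{Us,FOOO}. In the virtual framework, sphere bubbles are handled by arranging the Kuranishi structures so that bubbling strata have virtual codimension at least two and hence do not appear in the codimension-one boundary of a one-dimensional space; the separation of the degree-$(2n{-}2)$ part of $\zeta$ into the closed form $\theta$ (rather than a cycle constraint) is what guarantees that the remaining incidence conditions have codimension $\geq 4$, making this dimension count work. Your divisor-axiom remark is in the right spirit, but the cancellation is ultimately a codimension argument, not a pairing-with-opposite-orientation one.
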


With this preparation regarding the complex, now we can define the $\zeta$-\textit{deformed Floer homology} of $(CF_*(H),\partial_\zeta^{H,J})$ as follows. 
\begin{df}
The homology of the complex $(CF_*(H),\partial_\zeta^{H,J})$ is defined by 
\[HF_\zeta(H)=\frac{{\rm ker}\;\partial_\zeta^{H,J} }{{\rm im}\;\partial_\zeta^{H,J}}.\]
\end{df}
It is shown in~\cite[Proposition~3.8]{Us} that for a strongly nondegenerate $H\in\rH$,  $HF_\zeta(H)$ is independent of the choice of the almost complex structure $J$. This is the reason why we suppress the $J$ from the notation of the deformed Floer homology. Note that the differential $\partial_\zeta^{H,J}$  preserves the $\C$-space $CF_*^\lambda(H)$, the corresponding homology is denoted by $HF_\zeta^\lambda(H)$, called \textit{filtered Floer homology} of $H$.

\subsubsection{Pair of pants products}\label{subsec:concatenation}
For any Hamiltonian $H:S^1\times M\to \R$, one can modify $H$ in a standard way such that the associated Hamiltonian isotopy is constant near $[1]\in S^1$, namely, letting  $H'(t,x)=\chi'(t)H(\chi(t),x)$ for some smooth function $\chi:[0,1]\to [0,1]$ with the property that $\chi=0$ near $t=0$ and $\chi=1$ near $t=1$. In what follows, we always assume that for some small $\epsilon>0$, $H(t,0)\equiv 0$ for $|t|<\epsilon$. Given two such Hamiltonians $H,K$, we define a new Hamiltonian $H*K:S^1\times M\to\R$ by
\begin{equation}\notag
H*K(t,x)=
\begin{cases}
2H(2t,x)&0\leq t\leq 1/2,\\
2K(2t-1,x)&1/2\leq t\leq 1.\\
\end{cases}
\end{equation}

We now assume that $H,K$ and $H*K$ are  strongly nondegenerate Hamiltonians. 
Then there exists a deformed pair of pants product on chain level, denoted by 
\[\bar{*}_\zeta^{\rm PP}:CF_*(H)\otimes CF(K)\to CF(H*K),\]
which resembles the construction of the product on Hamiltonian Floer homology in~\cite{PSS} (see also~\cite{Lu}). The precise definition of the product $\bar{*}_\zeta^{\rm PP}$ is not used in this paper, and hence we will not go into details about it. Here we only mention that $\bar{*}_\zeta^{\rm PP}$ is a chain map that restricts to a map
\[\bar{*}_\zeta^{\rm PP}:CF^{\lambda}(H)\otimes CF^{\mu}(K)\to CF^{\lambda+\mu}(H*K)\]
for all $\lambda,\mu\in\R$.  On the homology level, it induces the map 
\[*_\zeta^{\rm PP}:HF^{\lambda}(H)\otimes HF^{\mu}(K)\to HF^{\lambda+\mu}(H*K).\]
See~\cite[Proposition~3.11]{Us}.

\subsection{PSS isomorphisms}\label{sec:pss}

Following~\cite[Proposition~3.3.2]{Us}, we will define a deformed Piunikhin-Salamon-Schwarz isomorphism from the Morse complex $(CM(f;\Lambda),\partial^f)$ to the $\zeta$-deformed Floer complexes $(CF_*(H),\partial_\zeta^{H,J})$. Isomorphisms of this type have been studied in~\cite{PSS,Lu}. 

For a generic Morse-Smale pair $(f,\rho)$ and a strongly nondegenerate Hamiltonian $H$ on $M$, the deformed PSS map 
\[
\psi^{\pss}_\zeta: (CM(f;\Lambda),\partial^f)\longrightarrow (CF_*(H),\partial_\zeta^{H,J})
\]
is defined by emumerating ``spiked discs" with incidence conditions corresponding to $I=(i_1,\ldots,i_k)$ as follows. 

Let $\chi:\R\to [0,1]$ be a smooth cutoff function satisfying $\chi(s)=0$ for $s\leq 0$, $\chi(s)=1$ for $s\geq 1$, and $\chi'(s)\geq 0$ for all $s$. Given an $S^1$-dependent family of almost complex structures $J$, let $\{J_s\}_{s\in[0,1]}$ be a two-parameter family of almost complex structures such that $J_{0,t}=J_0$,  $J_{1,t}=J_t$ and $J_{s,0}=J_0$ for all $s,t\in[0,1]$. Here $J_0$ is a time-independent almost complex structure compatible with $\omega$. We define the $\R\times S^1$-dependent pair $(H_\chi,J_\chi)$ by 
\[
H_\chi(s,t)=\chi(s)H_t,\quad J_\chi(s,t)=J_{\chi(s),t}. 
\]

Given $\gamma\in{\rm Per}(H)$ and $C\in\pi_2(\gamma)$, we choose any disc $u$ representing $C$ and denote $\mu(C)=\mu_H(\gamma,u)$. Let $\mathcal{M}^{\pss}(\gamma)$ denote the space of the solutions $u:\R\times S^1\to M$ of the Floer equation
\begin{equation}\label{e:sfl}
\partial_su+J_\chi\big(\partial_tu-X_{H_\chi}(u)\big)=0
\end{equation}
satisfying asymptotic boundary condition
$u(+\infty,t)=\gamma(t)$ for all $t$ and having finite energy
\[
E(u)=\int_{\R\times S^1}\big|\partial_s u\big|^2_{J_\chi}dsdt.
\]
Note that since $H_\chi\equiv0$ and $J_\chi\equiv J_0$ for $s<0$, the equation (\ref{e:sfl}) restricts to $J_0$-holomorphic curve on $(-\infty,0)\times S^1$ with finite energy. By the removable singularity theorem, the limit $\lim_{s\to-\infty}u(s,t)$ exists and is independent of $t$. So one can define the map $ev_{-\infty}:\mathcal{M}^{\pss}(\gamma)\to M$ by
\[
\operatorname{ev}_{-\infty}(u)=\lim_{s\to-\infty}u(s,t).
\]
We now define
\[
\mathcal{M}^{\pss}_k(\gamma,C)=\big\{(u,\vec{z})\in\mathcal{M}^{\pss}(\gamma)\times(\R\times S^1)^k\;\big|\;[u]=C\big\}.
\]
Denote its standard compactification by $\overline{\mathcal{M}}^{\pss}_k(\gamma,C)$. The evaluation maps $ev_i, i\in\{1,\ldots,k\}$ at the marked points, as well as the map $ev_{-\infty}$, extend naturally to this compactification. 

Given $p\in\crit(f)$ and $I=(i_1,\ldots,i_k)\in\{1,\ldots,m\}^k$, let $\iota_p:W^u(p)\hookrightarrow M$ denote the inclusion of the unstable manifold of $p$. We consider the fiber product:
\[
\mathcal{M}^{\pss}_k(\gamma,C,p;\Delta_I)=\overline{\mathcal{M}}^{\pss}_k(\gamma,C)_{\operatorname{ev}_{-\infty},\operatorname{ev}_1,\ldots,\operatorname{ev}_k}\times_{\iota_p,f_{i_1},\dots,f_{i_k}}\big(W^u(p)\times\Delta_{i_i}\times\cdots\times\Delta_{i_k}\big).
\]
This space carries a Kuranishi structure with corners, and  its virtual dimension is
\[
\mu(C)+\ind_f(p)-n-\delta(I).
\]
Let $\mathsf{s}_{\pss}$ denote the corresponding multisection. Then the PSS map $$\psi^{\pss}_{\zeta,H}: (CM(f;\Lambda),\partial^f)\longrightarrow (CF_*(H),\partial_\zeta^{H,J})$$
is defined by extending $\Lambda$-linearly from
\[
\psi^{\pss}_{\zeta,H} p=\sum_{k=0}^\infty\frac{1}{k!}\sum_{\substack{\gamma\in{\rm Per}(H),\\C=[u]\in\pi_2(\gamma)}}\sum_{\substack{I\in\{1,\ldots,m\}^k,\\\mu(C)+\ind_f(p)=n+\delta(I)}}|\mathsf{s}_{\pss}^{-1}(0)|\exp\big(\int u^*\theta\big)a_I[\gamma,u].
\]
It is shown in~\cite[Proposition~3.9]{Us} that the deformed PSS map $\psi^{\pss}_{\zeta,H}$ is a chain map which
induces a ring isomorphism (over $\Lambda$)
\[\Psi^{\rm PSS}_{\zeta,H}: HM(f;\Lambda)\longrightarrow HF_\zeta(H).\]
Moreover, this map respects the $\Z_2$-gradings of the respective complexes. Moreover,  the map $\Psi^{\pss}_{\zeta,H}$ is independent of the choice of the function $\chi$, the metric $\rho$ and the almost complex structures $\{J_s\}_{s\in[0,1]}$.

\section{Spectral invariants}\label{Sec:spec}

\subsection{Minmax critical value selector}
Fix a ground field $\F$. Let $X$ be a closed smooth manifold of positive dimension. We will denote by $H_*(X)$ the singular homology of $X$ with coefficient field $\F$. 
Let $f\in C^\infty(X,\R)$ be a smooth function. For any $\lambda\in\R$, we define $$X^\lambda:=\{x\in X\;|\;f(x)<\lambda\}.$$

To a non-zero singular homology class $a\in H_*(X)\setminus\{0\}$, we associate a  numerical invariant defined by
$$c_{LS}(a,f)=\inf\{\lambda\in\R\;|\;a\in{\rm im}(i^\lambda_*)\},$$
where $i^\lambda_*:H_*(X^\lambda)\to H_*(X)$ is the map induced by the inclusion $i^\lambda:X^\lambda\to X$. 
The function $$c_{LS}:H_*(X)\setminus\{0\}\times C^\infty(X,\R)\to\R$$ is called the \textit{minmax critical value selector} of $f$. The following  properties are well-known from classical Ljusternik-Schnirelman theory; see, e.g.,~\cite{HZ,Cha,CLOT,GG}.%

\begin{prop}\label{pp:minmax}
	The minmax critical value selector $c_{LS}$ satisfies the following properties.
\begin{enumerate}
	\item[{\rm 1.}] $c_{LS}(a,f)$ is a critical value of $f$, and $c_{LS}(k a,f)=c_{LS}(a,f)$ for any nonzero $k\in\F$.
	\item[{\rm 2.}]  $c_{LS}(a,f)$ is Lipschitz continuous in $f$ with respect to the $C^{0}$-distance.
	\item[{\rm 3.}] Let $[pt]$ and $[X]$ denote the point class and the fundamental class, respectively. Then
$$c_{LS}([pt],f)=\min f\leq c_{LS}(a,f)\leq\max f= c_{LS}([X],f).$$
	\item[{\rm 4.}] If $b\neq k[X]$ and $c_{LS}(a\cap b,f)= c_{LS}(a,f)$, then the set
     $$\Sigma=\{x\in\crit(f)\;|\;f(x)= c_{LS}(a,f)\}$$ 
     is homologically non-trivial; that is, for every open neighborhood $V$ of $\Sigma$, the induced map $i_*:H_k(V)\to H_k(X)$ by the inclusion $i:V\hookrightarrow X$ is nonzero for some $k>0$. In particular, such a set $\Sigma$ must be infinite.
 \end{enumerate}
	
\end{prop}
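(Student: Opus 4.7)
Properties (1)--(3) follow from the standard gradient-flow / deformation machinery for smooth functions on a closed manifold, while property (4) is the classical Ljusternik--Schnirelman multiplicity principle; I would treat them in order.

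For property (1), the scaling invariance $c_{LS}(ka, f) = c_{LS}(a, f)$ is immediate from the $\F$-linearity of $i^\lambda_*$. To show that $c := c_{LS}(a, f)$ is critical, argue by contradiction: if $c$ were a regular value, fix $\epsilon > 0$ so that $[c - \epsilon, c + \epsilon]$ contains no critical values and use the negative gradient flow of $f$ (with respect to any Riemannian metric on the closed manifold $X$) to produce a deformation retraction of $X^{c + \epsilon}$ onto $X^{c - \epsilon}$. This forces $\operatorname{im}(i^{c + \epsilon}_*) = \operatorname{im}(i^{c - \epsilon}_*)$, contradicting the infimum definition. For property (2), if $\|f - g\|_{C^0} \leq \epsilon$ then $\{f < \lambda - \epsilon\} \subseteq \{g < \lambda\}$, giving $c_{LS}(a, g) \leq c_{LS}(a, f) + \epsilon$, and swapping the roles of $f$ and $g$ yields the Lipschitz bound. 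For property (3), if $\lambda < \min f$ the sublevel set is empty, so $\operatorname{im}(i^\lambda_*) = 0$ and no nonzero class is in its image; if $\lambda > \max f$ then $X^\lambda = X$ and $i^\lambda_*$ is the identity. The point class lies in $\operatorname{im}(i^\lambda_*)$ as soon as $X^\lambda \neq \emptyset$, giving $c_{LS}([pt], f) = \min f$, and the fundamental class lies in $\operatorname{im}(i^\lambda_*)$ only when $X^\lambda = X$, giving $c_{LS}([X], f) = \max f$.

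For property (4), set $c := c_{LS}(a, f) = c_{LS}(a \cap b, f)$ and $\Sigma := \{x \in \crit(f) \mid f(x) = c\}$, and suppose for contradiction that there is an open neighborhood $V$ of $\Sigma$ with $i_*: H_k(V) \to H_k(X)$ zero for every $k > 0$. Applying the deformation lemma outside a slightly smaller neighborhood of $\Sigma$, one finds $\epsilon > 0$ such that the negative gradient flow of $f$ pushes $X^{c + \epsilon}$ into $X^{c - \epsilon} \cup V$. Since $a \in \operatorname{im}(i^{c + \epsilon}_*)$, a Mayer--Vietoris decomposition on the pair $(X^{c - \epsilon}, V)$ writes $a = a_1 + a_2$ with $a_1 \in \operatorname{im}(i^{c - \epsilon}_*)$ and $a_2 \in \operatorname{im}(H_*(V) \to H_*(X))$. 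The condition $b \neq k[X]$ means $\operatorname{PD}(b) \in H^{>0}(X)$, so by naturality of the cap product $a_2 \cap b$ factors through a positive-degree piece of $H_*(V) \to H_*(X)$ and therefore vanishes. Hence $a \cap b = a_1 \cap b \in \operatorname{im}(i^{c - \epsilon}_*)$, contradicting $c_{LS}(a \cap b, f) = c$.

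The main obstacle I anticipate is the execution of the Mayer--Vietoris decomposition and the verification that $a_2 \cap b = 0$ in property (4): these steps require careful bookkeeping with Poincar\'e duality and the compatibility of the intersection/cap product with the pair $(X^{c - \epsilon}, V)$, and are the technical core of the classical argument. I would model the detailed execution on the standard treatments in~\cite{HZ,Cha,CLOT,GG}.
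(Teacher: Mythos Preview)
The paper does not give a proof of this proposition at all: it simply states that the properties ``are well-known from classical Ljusternik--Schnirelman theory'' and refers to~\cite{HZ,Cha,CLOT,GG}. Your sketch is exactly the standard argument one finds in those references (gradient-flow deformation for (1), sublevel-set inclusion for (2), direct computation for (3), and the deformation lemma plus a Mayer--Vietoris/cap-product argument for (4)), so there is nothing to compare---you have supplied the proof the paper omits, and your outline is correct.
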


\subsection{Deformed Hamiltonian spectral invariants}
In this section, we follow the works of  Schwarz~\cite{Sc1} and Oh~\cite{Oh3} to define the spectral invariants and review their properties which will be useful in this paper. 

For $\zeta\in   \oplus_{i=0}^{n-1}H_{2i}(M;\Lambda_0)$
and a strongly nondegenerate Hamiltonian $H\in\rH$, as in the previous section we have the PSS isomorphism 
\[\Psi^{\rm PSS}_{\zeta,H}: H_*(M;\Lambda)\longrightarrow HF_\zeta(H).\]
Given $a\in H_*(M;\Lambda)\setminus\{0\}$, we put
\[
\rho_\zeta(a,H)=\inf\big\{\lambda\;|\;\Psi^{\rm PSS}_{\zeta,H}(a)\in \operatorname{im}( i^\lambda)\big\}
\]
where $i^\lambda:HF_\zeta^\lambda(H)\to HF_\zeta(H)$ is the natural map induced by the inclusion of $CF_*^\lambda(H)$ to $CF_*(H)$. Then $-\infty<\rho_\zeta(a,H)<+\infty$, and 
for any two strongly nondegenerate Hamiltonians $H,K\in \rH$,   $\rho_\zeta(a,\cdot)$ satisfies 
$$\int^1_0\min\limits_M(K_t-H_t)dt\leq \rho_\zeta(a,H)-\rho_\zeta(a,K)\leq \int^1_0\max\limits_M(K_t-H_t)dt.$$ 
This has been proved in~\cite{Us,FOOO}; see also~Oh~\cite{Oh3} for the undeformed case. Recall that the Hofer norm on the space of continuous functions $H:S^1\times M\to\R$ is given by 
\[
\|H\|=\int^1_0\big(\max_M H(t,x)-\min_M H(t,x)\big)dt.
\]
Hence, if $H,K$ are both normalized,  then  
\[|\rho_\zeta(a,H)-\rho_\zeta(a,K)|\leq \|H-K\|.\]
Consequently, one can extend $\rho_\zeta(a,\cdot)$ by continuity to the set of all normalized continuous functions $H:S^1\times M\to\R$. Note that adding a time-dependent smooth function $c:S^1\to\R$ to $H$ does not change the Hamiltonian flow of $H$ but only shifts the value of $\ah$ at each critical point by the constant $\int^1_0c(t)dt$, so we get
$$\rho_\zeta(a,H+c)=\rho_\zeta(a,H)+\int^1_0c(t)dt.$$ for all nonzero classes $a\in H(M,\Lambda)$. Then one can further extend $\rho_\zeta(a,\cdot)$ to a map on $C^0(S^1\times M,\R)$. The numbers $\rho_\zeta(a,H)$ are called \textit{deformed spectral invariants} of an element $H\in C^0(S^1\times M,\R)$ (if $a=0$, we put $\rho_\zeta(0,H)=-\infty$).  

\begin{prop}[{\cite[Proposition~3.13]{Us} or \cite[Theorem~7.8]{FOOO}}]\label{prop:property}
The deformed spectral invariant $$\rho_\zeta:H_*(M;\Lambda)\times C^\infty(S^1\times M,\R)\to [-\infty,\infty)$$ has the following properties:
\begin{itemize}
    \item[{\rm (P1)}] {\rm Normalization:}  $I_\nu(a)+\int^1_0\min H(t,\cdot)dt\leq \rho_\zeta(a,H)\leq I_\nu(a)+\int^1_0\max H(t,\cdot)dt$ for all $H\in\rH$. In particular, $\rho_\zeta(a,0)=I_\nu(a)$.
    \item[{\rm (P2)}] {\rm Spectrality:} If $H$ is strongly nondegenerate, then there is $c\in CF_*(H)$ such that $[c]=\Psi^{\rm PSS}_{\zeta,H}(a)$ and $\rho_\zeta(a,H)=\ell_H(c)$, and hence $\rho_\zeta(a,H)\in\spec(H)$. If $M$ is rational, for any $H\in\rH$ one also has $\rho_\zeta(a,H)\in\spec(H)$.
    \item[{\rm (P3)}] {\rm Homotopy invariance:} $\rho_\zeta(a,H)=\rho_\zeta(a,K)$, when $\varphi_H=\varphi_K$ in the universal covering of the group of  Hamiltonian diffeomorphisms $\widetilde{{\rm Ham}}(M,\omega)$, and $H,K$ are normalized.
    \item[{\rm (P4)}] {\rm Quantum shift:} $\rho_\zeta(\lambda a,H)=\rho_\zeta(a,H)+\nu(\lambda)$ for all $\lambda\in\Lambda$.
    \item[{\rm (P5)}] {\rm Projective invariance:} $\rho_\zeta(w a,H)=\rho_\zeta(a,H)$ for any $0\neq w\in\C$. 
    \item[{\rm (P6)}] {\rm Triangle inequality:} $\rho_\zeta(a*_\zeta b,H*K)\leq \rho_\zeta(a,H)+\rho_\zeta(a,K)$ for any $H,K\in \rH$. 
    \item[{\rm (P7)}] {\rm Valuation inequality:} 
    $\rho_\zeta(a+b,H)\leq\max\{\rho_\zeta(a,H),\rho_\zeta(b,H)\}$. And if $\rho_\zeta(a,H)\neq\rho_\zeta(b,H)$, then this equality holds. 
    \item[{\rm (P8)}] {\rm Duality:} For the Hamiltonian $\overline{H}$ given by $\overline{H}(t,x)=-H(t,\varphi_H^t(x))$, one has 
    \[\rho_\zeta(a,\overline{H})=-\inf\big\{\rho_\zeta(b,H)\;|\;b\in H_*(M;\Lambda),\; {\textstyle\prod}(b,a)\neq 0\big\}.\]

\end{itemize}

\end{prop}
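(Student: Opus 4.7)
\begin{pf}[Proof plan for Proposition~\ref{prop:property}]
The overall strategy is to follow the standard Oh--Schwarz spectral invariant program, but carried out inside the $\zeta$-deformed Floer package developed above. Throughout, I would first establish each property for strongly nondegenerate normalized $H$, and then extend by the Lipschitz continuity in the $C^0$-distance proved in the sentences preceding the statement. A key technical input is that the deformed PSS map $\Psi^{\rm PSS}_{\zeta,H}$ is a ring isomorphism that intertwines the deformed quantum product $*_\zeta$ on $QH_*(M,\omega)_\zeta$ with the deformed pair-of-pants product $*_\zeta^{\rm PP}$ at the chain level with the filtration behavior discussed in Section~\ref{subsec:concatenation}; this will be what makes (P6) work.

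For (P1), I would unpack the PSS chain map at $H=0$: the only contributing configurations are constant spiked discs, so $\Psi^{\rm PSS}_{\zeta,H=0}$ reduces to the identification of Morse homology with $H_*(M;\Lambda)$, and the filtration value of a representative $\sum a_g T^g$ is exactly $\max\{-g\mid a_g\neq 0\}=I_\nu(a)$. The general bound then follows from the standard energy estimate $\ah-\mathcal{A}_0\in [\int\min H,\int\max H]$. For (P4), the $\Lambda$-action on $CF_*(H)$ shifts $[x,\bar x]\mapsto[x,\bar x\sharp A_\lambda]$, which adds $\nu(\lambda)$ to $\ell_H$; applying the infimum gives the shift. (P5) is immediate because multiplying by a nonzero scalar in $\C$ leaves the subspace generated by $\Psi^{\rm PSS}_{\zeta,H}(a)$ in $HF_\zeta^\lambda(H)$ unchanged. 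For (P7), I would note that if $c_a,c_b$ are representatives of $\Psi^{\rm PSS}_{\zeta,H}(a),\Psi^{\rm PSS}_{\zeta,H}(b)$ with filtration close to the respective spectral values, then $c_a+c_b$ represents $\Psi^{\rm PSS}_{\zeta,H}(a+b)$, and the non-Archimedean property (iii) of $\ell_H$ (combined with the sharpness when the two filtration values differ, cf.~Remark~\ref{rmk:nonArchi}) yields the claim.

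For (P2), in the nondegenerate case I would argue that the set of realized filtration values $\{\ell_H(c)\mid [c]=\Psi^{\rm PSS}_{\zeta,H}(a)\}$ is a subset of $\spec(H)$ shifted by finitely many Novikov exponents, and by orthogonalizability of $(CF_*(H),\ell_H)$ together with Lemma~\ref{lem:orthogonal} one can always choose an orthogonal representative minimizing $\ell_H$. For rational $M$, $\spec(H)$ is a discrete subset of $\R$, so the infimum over a sequence $H_k\to H$ of nondegenerate perturbations stays in $\spec(H)$ in the limit, giving spectrality for all $H$. (P3) is then handled by showing that for a smooth family $H^s$ with fixed time-$1$ map, $\rho_\zeta(a,H^s)$ is continuous and takes values in the constant set $\spec(H^s)=\spec(H^0)$, hence is constant.

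The key structural properties are (P6) and (P8). For (P6), I would pick cycles $c_H\in CF_*(H)$, $c_K\in CF_*(K)$ representing $\Psi^{\rm PSS}_{\zeta,H}(a),\Psi^{\rm PSS}_{\zeta,K}(b)$ with $\ell_H(c_H)\leq \rho_\zeta(a,H)+\varepsilon$ and similarly for $K$; then $c_H\bar*_\zeta^{\rm PP} c_K$ represents $\Psi^{\rm PSS}_{\zeta,H*K}(a*_\zeta b)$ and has filtration at most $\ell_H(c_H)+\ell_K(c_K)$ by the filtered pair-of-pants estimate, and letting $\varepsilon\to 0$ finishes the argument; here the fact that $\Psi^{\rm PSS}$ intertwines $*_\zeta$ with $*_\zeta^{\rm PP}$ is essential. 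For (P8), I would use the Poincar\'e-type pairing $\prod$ of (\ref{e:pair}) on the chain level: there is a natural duality isomorphism $CF_*(H)\cong CF_*(\overline H)^*$ sending $[x,\bar x]$ to the dual of its ``time-reversed'' counterpart, under which $\ell_H$ and $\ell_{\overline H}$ are exchanged via $\ell_{\overline H}(c^*)=-\ell_H(c)$; the minmax definition then dualizes to the stated formula.

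The main obstacle I anticipate is verifying the filtered chain-level compatibility of $\Psi^{\rm PSS}_{\zeta,H}$ with the $\zeta$-deformed products, because the insertions of cycles $f_i(\Delta_i)$ through evaluation maps interact with the Kuranishi multisections $\mathsf{s}_{\pss}$ and $\mathsf{s}_{C,I}$ in a nontrivial way; controlling the resulting degeneracies and orientations so that the usual energy-action identity still bounds $\ell_H$ on the product cycle by the sum of the factor filtrations is the real technical point. Fortunately all of this has been established in~\cite[Section~3.3]{Us} (and in the bulk-deformed setting of~\cite[Chapter~15]{FOOO}), so in a rigorous write-up I would cite these works at the relevant steps rather than redo the Kuranishi analysis.
\end{pf}
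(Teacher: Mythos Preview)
Your proposal is correct and follows essentially the same approach as the paper: the paper does not give an independent proof but simply records (in the remark immediately following the proposition) that all properties follow by adapting the standard Oh--Schwarz arguments to the $\zeta$-deformed setting, citing \cite{Oh3} in general, \cite[Corollary~1.3]{Us1} and \cite[Lemma~7.2]{Oh2} for (P2), and Remark~\ref{rmk:nonArchi} for the equality case of (P7). Your outline is in fact more detailed than what the paper provides, and your concluding acknowledgment that the filtered compatibility of $\Psi^{\rm PSS}_{\zeta,H}$ with the deformed products is the real technical input handled in \cite{Us} and \cite{FOOO} is exactly the paper's stance.
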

\begin{rmk}
The properties in Proposition~\ref{prop:property} follow by adapting arguments in the undeformed case, see for instance~\cite{Oh3}. Here we only mention that the spectrality property for a strongly nondegenerate $H$ follows from~\cite[Corollary 1.3]{Us1} straightforwardly, and then for arbitrary $H$ on a rational symplectic manifold follows from~\cite[Lemma~7.2]{Oh2}, and that the second statement in the (P7)  follows from Remark~\ref{rmk:nonArchi}. 
\end{rmk}

\begin{rmk}
If $(M,\omega)$ is strongly semipositive, we do not need any Kuranishi structure in the constructions of the deformed spectral invariants $\rho_\zeta$ and just appeal to~\cite[Section~3.2]{Us} without missing any property listed as above. In this case Proposition~\ref{prop:property} is enough in the following content and for our applications in Section~\ref{sect:app}. However, if one would like to obtain Theorems~\ref{thm:onept}-\ref{e:ptsInv} without the strongly semipositive condition constraint, then one of course will need Kuranishi structures or something similar to deal with multiple covers of spheres of negative Chern number, but at least in principle one could use constructions just at the level of Fukaya and Ono~\cite{FO} rather than~\cite{FOOO1}; see~\cite[Section~3]{Us}.

\end{rmk}

\begin{df}\label{df:zetanorm}
Given $\zeta\in   \oplus_{i=0}^{n-1}H_{2i}(M;\Lambda_0)$, for any $H\in\rH$, we define 
\begin{equation}\notag
\gamma_\zeta(H):= \rho_\zeta([M],H)+\rho_\zeta([M],\overline{H})
\end{equation}
and 
\[
    \gamma_\zeta(\varphi):=\inf\{\gamma_\zeta(H)\;|\;\varphi=\varphi_H^1,\;H\in\rH\}. 
\]
\end{df}
\begin{rmk}
According to the (P1), (P3) and (P6) as above, we see that $\gamma_\zeta$ satisfies
\begin{equation}\label{e:hofer}
0\leq \gamma_\zeta(\varphi)\leq \|\varphi\|:=\inf\{\|H\|\;|\;\varphi=\varphi_H^1,\;H\in\rH\}. 
\end{equation}
In fact, $\gamma_\zeta$ gives rise to  a norm on $\ham(M,\omega)$, called a \textit{$\zeta$-spectral norm}. This can  be proven by straightforwardly adapting arguments that are well-known in the $\zeta=0$ case. See~\cite{Oh3} or~\cite[Remark~2.2]{Us2}. Since this fact is not explicitly used in the present paper, we leave its proof to the reader. 
\end{rmk}

Spectral invariants are notoriously difficult to compute in general. However, for a $C^2$-small autonomous Hamiltonian $H:M\to\R$, the deformed Hamiltonian spectral invariants $\rho_\zeta$ of $H$ can be computed in terms of the minmax critical sector $c_{LS}$ for $H$.  

\begin{lem}\label{lem:computation}
Let $H:M\to\R$ be a $C^2$-small smooth function (i.e., the $C^2$-norm $\|H\|_{C^2}$ is sufficiently small). Then $\rho_\zeta(a,H)=c_{LS}(a,H)$ for all $a\in H_*(M,\C)\setminus \{0\}$. 
\end{lem}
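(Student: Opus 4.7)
The plan is a standard Oh--Schwarz style reduction: when $\|H\|_{C^2}$ is sufficiently small, the filtered deformed Floer complex of $H$ identifies, via an explicit filtered chain isomorphism, with the (undeformed) Morse complex of $H$ tensored with $\Lambda$, and the deformed PSS map corresponds to the canonical isomorphism on homology. Once that reduction is in place, the lemma follows by comparing the Floer minimax defining $\rho_\zeta(a,H)$ with the Morse minimax defining $c_{LS}(a,H)$ for $a\in H_*(M;\C)$.

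For the first reduction, I would first observe that if $\|H\|_{C^2}$ is small, every contractible $1$-periodic orbit of $X_H$ is a constant loop at a critical point $p$ of $H$, and, with the constant capping $\bar p$, one has $\ah(p,\bar p)=H(p)$ and $\mu_H(p,\bar p)=n-\ind_p(H)$; hence $\spec(H)$ has a ``sheet'' structure, and the $0$-sheet $H(\crit H)$ is separated from the others by the minimal symplectic period. The analytic heart, going back to Hofer--Salamon and Floer, is that for an $\omega$-compatible time-independent $J$ and $\|H\|_{C^2}$ sufficiently small, every finite-energy Floer cylinder in class $A=0$ joining constant-capping generators is time-independent and coincides with a Morse negative gradient trajectory of $H$; cylinders with $A\neq 0$ carry energy at least the minimal symplectic period and hence cannot appear in the $0$-dimensional strata of the differential at low index difference. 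For $k\geq 1$ insertions with $A=0$, the $t$-independence of the cylinder endows the fiber-product moduli $\widetilde{\mathcal{M}}(p,q,0;\Delta_I)$ with a free $(S^1)^k$-action coming from the $t$-coordinates of the marked points, so a $0$-dimensional virtual count vanishes. Since $\exp(\int_C\theta)=1$ when $A=0$, the assignment $[p,\bar p]\mapsto p$ extends $\Lambda$-linearly to a filtration-preserving chain isomorphism
\[
(CF_*(H),\partial_\zeta^{H,J})\;\cong\;(CM_*(H;\C)\otimes_\C\Lambda,\;\partial^H\otimes\mathrm{id}),
\]
where the filtration on the right takes $p\,T^g$ to $H(p)-g$. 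An entirely parallel argument applied to the PSS moduli of Subsection~\ref{sec:pss} shows that $\Psi^{\pss}_{\zeta,H}$ corresponds, under this identification, to the standard Morse continuation, i.e.\ to the identity on $H_*(M;\Lambda)$.

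It then remains to compute the minimax. For $a\in H_*(M;\C)\subset H_*(M;\Lambda)$, any Floer cycle $c$ representing $\Psi^{\pss}_{\zeta,H}(a)$ decomposes uniquely as $c=\sum_i c_i T^{\lambda_i}$ with Morse cycles $c_i\in CM_*(H;\C)$, and the condition $[c]=a$ forces $[c_0]=a$ (for $\lambda_0=0$) and $[c_i]=0$ for $i\neq 0$. Hence $\ell_H(c)\geq\ell_H(c_0)=\max_{p\in\mathrm{supp}(c_0)}H(p)$, and the infimum of the right-hand side over Morse cycles $c_0$ with $[c_0]=a$ is $c_{LS}(a,H)$ by classical Ljusternik--Schnirelman theory; the infimum is attained, so taking the infimum over all such $c$ yields $\rho_\zeta(a,H)=c_{LS}(a,H)$. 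The main obstacle is the chain-complex identification for $C^2$-small $H$; within that, the delicate point is the vanishing of $\zeta$-deformed contributions with $k\geq 1$, for which the $(S^1)^k$-symmetry above is the natural idea, but a rigorous treatment must also check compatibility with the Kuranishi structures, as in~\cite[Section~3.3]{Us}.
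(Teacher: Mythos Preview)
Your outline is in the right spirit and the final minimax paragraph is fine, but there is a genuine gap in how you dispose of the $A\neq 0$ contributions. You assert that ``cylinders with $A\neq 0$ carry energy at least the minimal symplectic period and hence cannot appear in the $0$-dimensional strata of the differential at low index difference.'' On a general rational $(M,\omega)$ there is no relation between $\omega(A)$ and $c_1(A)$, so a Floer cylinder in a class $A$ with $\omega(A)\geq p$ can still satisfy $\mu(C)=\delta(I)+1$ and contribute to a zero-dimensional virtual count; an energy bound does not constrain the index. Likewise, your $(S^1)^k$-action rotating the $t$-coordinates of the marked points is only well defined after you already know the cylinder is $t$-independent (i.e.\ on the $A=0$ locus); it is not a global symmetry of the compactified moduli and so cannot be fed into a Kuranishi argument to kill contributions with $A\neq 0$.

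The paper handles both issues at once, and in a single stroke, by working directly with the PSS chain map (not the Floer differential) and using the \emph{global} $S^1$-action that rotates the entire domain $\R\times S^1$ together with the sphere bubbles. For autonomous $H$ and time-independent $J$ this action is defined on the full compactified PSS moduli; by the Fukaya--Ono localization technique (\cite[Mainlemma~22.4]{FO}) one can choose the multisection so that the virtual count is supported on the $S^1$-fixed locus. That fixed locus consists of $t$-independent configurations---perturbed negative gradient trajectories together with constant spheres---which forces $A=0$ automatically, for every $k$. A transversality-and-dimension comparison (the incidence conditions at a single point on a Morse trajectory cost $\sum_j(2n-2d(i_j))$, whereas the zero-dimensionality condition requires $\sum_j(2n-2d(i_j))-2k$) then forces $k=0$, and one obtains $\psi^{\rm PSS}_{\zeta,H}(q)=[q,0]$ on the nose at chain level. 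Since this is a filtration-preserving bijection on $\Lambda$-generators and PSS is a chain map, the filtered chain isomorphism you were aiming for (and the identification of $\partial_\zeta^{H,J}$ with $\partial^H\otimes\mathrm{id}$) follow for free; your separate analysis of the Floer differential is then unnecessary. The paper also begins with a density/Lipschitz reduction to the case where $H$ is a $C^2$-small strongly nondegenerate Morse function, which you should include as well.
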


\begin{proof}

First we notice that for a closed manifold $M$ with positive dimension, the set of smooth functions on $M$ is dense in the space of Morse functions in  $C^2$-topology. One may perturb $H$ to a $C^2$-small smooth Morse function in $C^2$-norm, and then further to an autonomous Hamiltonian whose critical points miss the images of the maps $f_i:\Delta_i\to M$ as in (\ref{e:delta}), i.e., a strongly nondegenerate autonomous Hamiltonian $H$. Since $c_{LS}(a,H)$ and $\rho_\zeta(a,H)$ are both Lipschitz continuous in $H$, it suffices to prove the lemma for a $C^2$-small Morse and strongly nondegenerate $H$. 

Since $H$ is a $C^2$-small Morse function, the set ${\rm Per}(H)$ indeed consists of critical points of $H$ (cf. \cite[Section~7]{SZ}). Note that for each $q\in\crit(H)$, the set $\pi_2(q)$ consisting of the homotopy classes of discs with boundary mapping to $q$ is isomorphic to $\pi_2(M)$. Given $A\in\pi_2(M)$, let $A_q\in\pi_2(q)$ be the element corresponding to $A$. According to our convention for Conley-Zehnder index of Hamiltonian periodic orbits, we have $\mu(A_q)=n-\ind_H(q)+2\langle c_1(TM), A\rangle$.  
For $q\in\crit(H)$, we denote by $[q,0]$ the class of  the constant periodic orbit $q$ with constant capping. Clearly, 
\begin{equation}\label{e:value}
\ah([q,0])=H(q).
\end{equation}

Given $q\in\crit(H)$, recall that the PSS map $\psi_{\zeta,H}^{\rm PSS}:CM(H;\Lambda)\to CF_*(H)$ defined as in Section~\ref{sec:pss} is 
\begin{equation}\label{e:psiq}
  \psi^{\pss}_{\zeta,H} q=\sum_{k=0}^\infty\frac{1}{k!}\sum_{\substack{p\in{\rm Crit}(H),\\A\in\pi_2(M)}}\sum_{\substack{I\in\{1,\ldots,m\}^k,\\\ind_f(q)-\ind_f(p)+2c_1(A)=\delta(I)}}|\mathsf{s}_{\pss}^{-1}(0)|\exp\big(\int_A \theta\big)a_I[p,A_p].  
\end{equation}
Here $\mathsf{s}_{\pss}$ is the multisection of the Kuranishi structure with corners on the fiber product $$\mathcal{M}^{\pss}_k(p,A,q;\Delta_I)=\overline{\mathcal{M}}^{\pss}_k(p,A)_{ev_{-\infty},ev_1,\ldots,ev_k}\times_{\iota_q,f_{i_1},\dots,f_{i_k}}\big(W^u(q)\times\Delta_{i_i}\times\cdots\times\Delta_{i_k}\big)$$ 
where $\overline{\mathcal{M}}^{\pss}_k(p,A)$ is the compactification of the space of perturbed holomorphic planes with $k$ marked points representing $A$ and asymptotic to $p$. 

Now we may choose a time-independent almost complex structure $J$ compatible with $\omega$ such that the pair $(H,\rho_J)$ is a Morse-Smale pair where $\rho_J(\cdot,\cdot)=\omega(\cdot,J\cdot)$, see Theorem~8.1 in~\cite{SZ}.  Then we choose the two-parameter family $\{J_s\}_{s\in[0,1]}$ of almost complex structures as in Section~\ref{sec:pss} such that $J_s\equiv J$. In this case, we will show that the corresponding PSS map $$\psi_{\zeta,H}^{\rm PSS}:CM(H;\Lambda)\to CF_*(H)$$ sends $q$ to $[q,0]$.

A typical element of $\overline{\mathcal{M}}^{\pss}_k(p,A)$ consists of a disc component and possibly some other components, each of which is cylindrical solution to the Floer equation for $H$ or a $J$-holomorphic sphere. 

The space $\mathcal{M}^{\pss}_k(p,A,q;\Delta_I)$ has an extra $S^1$-symmetry by the $S^1$ action of disc, cylinder components of the domain $\R\times S^1$, and by rotating the spherical components. Following Fukaya and Ono~\cite[Mainlemma 22.4]{FO}, we see that the fixed locus $\mathcal{M}^{\pss}_k(p,A,q;\Delta_I)^{S^1}$ of this action is isolated, and away from the fixed locus the action is locally free. So, after taking a quotient by this $S^1$ action, one can construct a Kuranishi structure of dimension $-1$ on the quotient of the complement of the fixed locus and choose a multisection on it which is transversal to $0$. Then we lift this multisection to the complement
$\mathcal{M}^{\pss}_k(p,A,q;\Delta_I)\setminus \mathcal{M}^{\pss}_k(p,A,q;\Delta_I)^{S^1}$ such that the quantity $|\mathsf{s}_{\pss}^{-1}(0)|$ is given by contributions only coming from $\mathcal{M}^{\pss}_k(p,A,q;\Delta_I)^{S^1}$. Note that the $S^1$-action on the spherical components is locally free except those corresponding to a topological trivial sphere, and that 
each element in $\mathcal{M}^{\pss}_k(p,A,q;\Delta_I)^{S^1}$ is
independent of the $S^1$-coordinate, and hence,  a solution $\gamma:[0,\infty)\to M$ of \begin{equation}\label{e:grad}
    \dot{\gamma}(s)=-\chi(s)\; {\rm grad}_{\rho_J} H(\gamma(s))\quad \hbox{with}\;\gamma(0)\in W^u(q;H)
\end{equation} (corresponding to the disc component), and possibly some number of negative gradient flowlines of $H$ (corresponding to the cylindrical components), or constant maps $u:S^2\to M$ with image $\gamma(0)$,   where all subject to suitable incidence conditions. This implies that $A=0$ since the nonspherical components are $S^1$-independent, and that there will be no cylindrical components since their images in $M$ are $1$-dimensional objects which do not satisfy nontrivial incidence conditions. So the only contributions to $\psi_{\zeta,H}^{\rm PSS}(q)$ come from the solutions $\gamma$ of (\ref{e:grad}) with $\gamma(\infty)=p\in \crit(H)$ with incidence conditions, corresponding to the points $\gamma(0)\in W^u(q)\cap W^s(p)$ which meet the cycles $\beta_1^I,\dots,\beta_k^I$. Furthermore, we may ask that these later intersections are mutually transverse to each other and to the $W^u(q)$ and $W^s(p)$, then a dimension counting yields
$$
\ind_H(q)-\ind_H(p)=\dim\big(W^u(q)\cap W^s(p)\big)=\sum_{j=1}^k(2n-2d(i_j)).
$$
Consequently, from the fact that the only terms in (\ref{e:psiq}) corresponding to $A=0$ satisfy $$\ind_H(q)-\ind_H(p)=\delta(I)=\sum_{j=1}^k(2n-2d(i_j))-2k,$$ we deduce that the only contributions to  $\psi_{\zeta,H}^{\rm PSS}(q)$ have $k=0$. So $\ind_H(q)-\ind_H(p)=0$, and hence (\ref{e:psiq}) simplifies to
\[
\psi_{\zeta,H}^{\rm PSS}(q)=\sum_{\substack{p\in\crit(H),\\\ind_H(q)=\ind_H(p)}} n(q,p)\cdot [p,0]
\]
where $n(q,p)$ is the number of points in $W^u(q)\cap W^s(p)$. If $p\neq q$, we must have $n(p,q)=0$, otherwise, there is a $\R$-action on $ W^u(q)\cap W^s(p)$  which contradicts to $\dim(W^u(q)\cap W^s(p))=0$. If $p=q$, then we have $n(q,p)=1$ obviously. Therefore, $\psi_{\zeta,H}^{\rm PSS}$ sends each critical point $q$ of $H$ to $[q,0]$. 
This, together with (\ref{e:value}), yields the desired equality. \end{proof}

\subsection{Hamiltonian Ljusternik--Schnirelman inequality}

As we mentioned before, as a vector space over $\C$ the big quantum homology $H_*(M;\Lambda)$ is isomorphic to $H_*(M;\C)\otimes_\C\Lambda$. We set 
\[
\widehat{H}_*(M;\Lambda)=H_{*<2n}(M;\C)\otimes_\C\Lambda. 
\]

\begin{thm}\label{thm:LjS}
Let $(M^{2n},\omega)$ be a closed rational symplectic manifold. Let $H\in\rH$, and let $\alpha,\beta\in H_*(M;\Lambda)$ with $\alpha\neq 0$. Then we have
\begin{equation*}
\rho_\zeta(\alpha *_\zeta \beta, H)\leq \rho_\zeta (\alpha,H)+I_\nu(\beta). 
\end{equation*}
Furthermore, if $\beta\in\widehat{H}_*(M,\Lambda)$ is non-zero, then either
\[
\rho_\zeta(\alpha *_\zeta \beta, H)< \rho_\zeta (\alpha,H)+I_\nu(\beta)
\]
or the set ${\rm Fix}(\varphi_H^1)$ is homologically non-trivial in $M$ (and hence infinite). 
\end{thm}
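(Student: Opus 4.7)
\begin{pf}[Proof plan]
For the first inequality, the plan is to invoke the triangle inequality (P6) with the second Hamiltonian taken to be identically zero. The concatenation $H*0$ is merely a time-reparametrization of the isotopy generated by $H$ and so represents the same class in $\widetilde{\ham}(M,\omega)$; consequently homotopy invariance (P3) gives $\rho_\zeta(\alpha*_\zeta\beta,H*0)=\rho_\zeta(\alpha*_\zeta\beta,H)$ after adjusting by the normalizing constant. Meanwhile (P1) gives $\rho_\zeta(\beta,0)=I_\nu(\beta)$. Chaining these with (P6) yields the desired bound
\[
\rho_\zeta(\alpha*_\zeta\beta,H)=\rho_\zeta(\alpha*_\zeta\beta,H*0)\leq \rho_\zeta(\alpha,H)+\rho_\zeta(\beta,0)=\rho_\zeta(\alpha,H)+I_\nu(\beta).
\]

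For the dichotomy, the plan is to argue by contradiction. Suppose the equality $\rho_\zeta(\alpha*_\zeta\beta,H)=\rho_\zeta(\alpha,H)+I_\nu(\beta)$ holds with $\beta\in \widehat{H}_*(M;\Lambda)\setminus\{0\}$, and that $\operatorname{Fix}(\varphi_H^1)$ is \emph{not} homologically nontrivial in $M$. By the quantum shift (P4), after replacing $\beta$ with $T^{-I_\nu(\beta)}\beta$ we may assume $I_\nu(\beta)=0$. By hypothesis, there is an open neighborhood $V\supset \operatorname{Fix}(\varphi_H^1)$ such that the inclusion $i:V\hookrightarrow M$ induces the zero map $i_*:H_k(V)\to H_k(M)$ for all $k>0$. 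The idea is then to construct a normalized smooth Morse function $f$ on $M$ such that (i) the critical points of $f$ that realize the minmax value $c_{LS}(\beta,f)$ lie inside $V$, and (ii) by Proposition~\ref{pp:minmax}(4), the homological triviality of $V$ forces a strict inequality $c_{LS}(\beta,f)<c_{LS}([M],f)=\max f$. For $\epsilon>0$ small, Lemma~\ref{lem:computation} yields $\rho_\zeta(\beta,\epsilon f)=\epsilon c_{LS}(\beta,f)$, whereas for $[M]$ one has $\rho_\zeta([M],\epsilon f)=\epsilon\max f$.

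Applying the triangle inequality (P6) to $H*\epsilon f$, combined with the Hofer continuity of $\rho_\zeta(\cdot,\cdot)$ routed through the intermediate $H*0$ (which gives $|\rho_\zeta(\cdot,H)-\rho_\zeta(\cdot,H*\epsilon f)|\leq \epsilon\|f\|_{\rm Hofer}$), one then compares the inequalities obtained for $\alpha*_\zeta\beta$ and for $\alpha*_\zeta [M]=\alpha$. Using the identity $\alpha*_\zeta[M]=\alpha$ to calibrate the leading-order behavior of $\rho_\zeta(\alpha,H*\epsilon f)$ in $\epsilon$, the strict separation $c_{LS}(\beta,f)<\max f$ produced above is transferred to a strict inequality for $\rho_\zeta(\alpha*_\zeta\beta,H*\epsilon f)$ that is incompatible with the assumed equality, yielding the desired contradiction.

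The main obstacle is twofold. First, in the Morse-theoretic step, constructing $f$ so that the homological triviality of $V$ genuinely forces $c_{LS}(\beta,f)$ strictly below $\max f$ requires a delicate application of classical Ljusternik--Schnirelman theory, localized to arrange the critical set at the minmax level to lie inside $V$; this is in the spirit of the cuplength estimate in~\cite{HZ,CLOT}. Second, the Hofer-continuity estimate alone is too weak to conclude -- one must sharpen the comparison via the spectrality property (P2), the non-Archimedean orthogonality underlying Lemma~\ref{clm:alpha}, and the chain-level pair-of-pants product of Section~\ref{subsec:concatenation} in order to isolate the leading-order behavior of $\rho_\zeta(\alpha*_\zeta\beta,H*\epsilon f)$ in $\epsilon$ and obtain a contradiction of the correct order.
\end{pf}
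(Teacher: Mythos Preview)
Your argument for the first inequality is correct and essentially identical to the paper's.

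For the dichotomy, however, there is a genuine gap. You correctly diagnose that Hofer continuity alone gives only an $O(\epsilon)$ estimate, which is of the same order as the gain $\epsilon\bigl(\max f - c_{LS}(\beta,f)\bigr)$, so nothing survives in the limit. But the remedies you propose---non-Archimedean orthogonality as in Lemma~\ref{clm:alpha}, or the chain-level pair-of-pants product---do not address this; they are not used in the paper's argument and it is unclear how they would help isolate a leading-order term.

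The key idea you are missing is the \emph{exact} invariance of the spectral value under concatenation with a suitably chosen $f$. Rather than trying to force the minmax critical set into $V$, one chooses $f$ so that $f\equiv 0$ on~$\overline V$ and $f<0$ on $M\setminus\overline V$. Then for $\varepsilon$ small one has $\operatorname{Fix}(\varphi^1_{H*\varepsilon f})=\operatorname{Fix}(\varphi^1_H)$, and a direct computation shows $\spec(H*s\varepsilon f)=\spec(H)$ for all $s\in[0,1]$. Since $(M,\omega)$ is rational, this common spectrum has measure zero; combining this with spectrality (P2) and $C^0$-continuity forces the \emph{exact equality}
\[
\rho_\zeta(\alpha*_\zeta\beta,H*\varepsilon f)=\rho_\zeta(\alpha*_\zeta\beta,H).
\]
Now (P6) gives $\rho_\zeta(\alpha*_\zeta\beta,H)\le \rho_\zeta(\alpha,H)+\rho_\zeta(\beta,\varepsilon f)$, and the assumed equality yields $\rho_\zeta(\beta,\varepsilon f)\ge I_\nu(\beta)$. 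Expanding $\beta=\sum_i\beta_i\otimes k_i$ with $\beta_i\in H_{*<2n}(M;\C)$ and invoking (P4), (P7) and Lemma~\ref{lem:computation}, one finds an index $i_0$ with $c_{LS}(\beta_{i_0},\varepsilon f)\ge 0=\max(\varepsilon f)=c_{LS}([M],\varepsilon f)$. The equality $c_{LS}([M]\cap\beta_{i_0},\varepsilon f)=c_{LS}([M],\varepsilon f)$ with $\beta_{i_0}\ne k[M]$ then forces, via Proposition~\ref{pp:minmax}(4), the homological nontriviality of the zero level set $\overline V\supset\operatorname{Fix}(\varphi_H^1)$---which is the conclusion, not a contradiction.

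In short: the argument is direct, not by contradiction, and the decisive step is the spectrum-preservation trick (choosing $f$ constant on a neighborhood of the fixed set) that upgrades the $O(\epsilon)$ comparison to an exact identity.
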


Next we postpone the proof of Theorem~\ref{thm:LjS} to the end of this section, and employ this theorem to give the following corollary which will be used in Section~\ref{sec:mainpf}.

\begin{cor}\label{cor:specineq}
Let $(M^{2n},\omega)$ be a closed rational symplectic manifold. Suppose that there exist $k$ homology classes $u_i\in \widehat{H}_{*}(M;\Lambda)$, $i=1,\ldots,k$ and $\beta\in H_*(M;\Lambda)$ such that $\beta*_\zeta u_1*_\zeta\cdots*_\zeta u_k\neq 0$ and $I_\nu(u_i)\leq 0$. Let $H\in\rH$ such that the ${\rm Fix}(\varphi_H^1)$ is a finite set. Then the action functional $\ah$ has $k+1$ critical points $(x_0,\overline{x}_0),\ldots,(x_{k},\overline{x}_{k})$ such that 
\[
    \rho_\zeta(\alpha_k,H)=\ah(x_{k},\overline{x}_{k})<\ldots<\rho_\zeta(\alpha_0,H)=\ah(x_{0},\overline{x}_{0})
\]
where $\alpha_0=\beta$ and $\alpha_i=\alpha_{i-1}*_\zeta u_i$ for $i\in \{1,\ldots,k\}$. 
\end{cor}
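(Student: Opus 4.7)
The plan is to iterate Theorem~\ref{thm:LjS} along the chain $\alpha_0=\beta,\alpha_1,\ldots,\alpha_k$ to produce a strictly decreasing sequence of spectral values, and then invoke the spectrality property (P2) of Proposition~\ref{prop:property} to realize each value as the action of a critical point of $\ah$.

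First I would observe that every $\alpha_i$ is nonzero: since $\alpha_k=\beta*_\zeta u_1*_\zeta\cdots*_\zeta u_k\neq 0$ by hypothesis, a downward induction using $\alpha_i=\alpha_{i-1}*_\zeta u_i$ forces $\alpha_{i-1}\neq 0$ (and incidentally $u_i\neq 0$) for every $i\in\{1,\ldots,k\}$. Then for each such $i$ I would apply Theorem~\ref{thm:LjS} with the ordered pair $(\alpha_{i-1},u_i)$: since $u_i\in\widehat{H}_*(M;\Lambda)\setminus\{0\}$, the theorem yields the dichotomy
\[
\rho_\zeta(\alpha_i,H)<\rho_\zeta(\alpha_{i-1},H)+I_\nu(u_i)
\]
or the set ${\rm Fix}(\varphi_H^1)$ is homologically nontrivial in $M$. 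The standing hypothesis that ${\rm Fix}(\varphi_H^1)$ is finite rules out the second alternative, since any sufficiently small neighbourhood of a finite subset of $M$ is a disjoint union of balls and therefore has vanishing positive-degree homology (recall from Proposition~\ref{pp:minmax}.4 that ``homologically nontrivial'' requires nonvanishing of $H_k$, $k>0$). Combined with $I_\nu(u_i)\leq 0$, this gives the strict inequality $\rho_\zeta(\alpha_i,H)<\rho_\zeta(\alpha_{i-1},H)$ and hence the strictly decreasing chain
\[
\rho_\zeta(\alpha_k,H)<\rho_\zeta(\alpha_{k-1},H)<\cdots<\rho_\zeta(\alpha_0,H).
\]

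To extract critical points, I would then invoke Proposition~\ref{prop:property}(P2): rationality of $(M,\omega)$ guarantees $\rho_\zeta(\alpha_i,H)\in\spec(H)$ for \emph{every} $H\in\rH$, regardless of nondegeneracy. Thus each spectral value is realised as $\ah(x_i,\overline{x}_i)$ for some $(x_i,\overline{x}_i)\in\crit(\ah)$, and distinct action values automatically yield distinct critical points, producing the required $k+1$.

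The main obstacle is the case $I_\nu(u_i)=0$, where the triangle inequality $\rho_\zeta(\alpha_i,H)\leq \rho_\zeta(\alpha_{i-1},H)+I_\nu(u_i)$ fails on its own to give strict decrease and one must genuinely exploit the Ljusternik--Schnirelman-type dichotomy in Theorem~\ref{thm:LjS}. This is the only step where the finiteness hypothesis on ${\rm Fix}(\varphi_H^1)$ is used, and it is precisely the Floer-theoretic analogue of the classical mechanism by which coincident minimax values force the critical set to be large. Once Theorem~\ref{thm:LjS} and the spectrality axiom (P2) are granted, the remainder of the argument is a purely formal iteration.
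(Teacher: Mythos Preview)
Your proposal is correct and follows essentially the same route as the paper: observe that $\alpha_k\neq 0$ forces all $\alpha_i\neq 0$, apply Theorem~\ref{thm:LjS} at each step to get $\rho_\zeta(\alpha_i,H)<\rho_\zeta(\alpha_{i-1},H)+I_\nu(u_i)\leq\rho_\zeta(\alpha_{i-1},H)$ using finiteness of ${\rm Fix}(\varphi_H^1)$, and then invoke spectrality (P2) for rational manifolds. Your added explanation of why a finite set cannot be homologically nontrivial is a welcome clarification that the paper leaves implicit.
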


\begin{proof}
    The condition that $\beta*_\zeta u_1*_\zeta\cdots*_\zeta u_k\neq 0$ implies that each $\alpha_i$ is nonzero.  
    Since ${\rm Fix}(\varphi_H^1)$ is finite and $I_\nu(u_i)\leq 0$ for any $i\in \{1,\ldots,k\}$, by Theorem~\ref{thm:LjS} we obtain immediately 
    \[
        \rho_\zeta(\alpha_i,H)=\rho_\zeta(\alpha_{i-1}*_\zeta u_i,H)<\rho_\zeta(\alpha_{i-1},H)+I_\nu(u_i)\leq \rho_\zeta(\alpha_{i-1},H).         
    \]
So we get
\[
\rho_\zeta(\alpha_k,H)<\ldots< \rho_\zeta(\alpha_0,H). 
\]
Combining this with Proposition~\ref{prop:property} (P2) yields $k+1$ critical points $(x_0,\overline{x}_0),\ldots,(x_{k},\overline{x}_{k})$ of $\ah$ as required. \end{proof}

\noindent\textbf{Proof of Theorem~\ref{thm:LjS}.} 
The argument we will make to show the theorem is an adaption of Howard~\cite{How}; see also~\cite{Sc1,GG,BHS,Go}. The key idea of the proof is to use Lemma~\ref{lem:computation} to transform the desired inequality on the spectral invariant $\rho_\zeta$  to the 4th property of $c_{LS}$ in Proposition~\ref{pp:minmax}. 

First, by Proposition~\ref{prop:property}~(P6), we see that  
\[
\rho_\zeta(\alpha *_\zeta \beta,H*0)\leq \rho_\zeta(\alpha,H)+\rho_\zeta(\beta,0).
\]
This, together with by Proposition~\ref{prop:property}~ (P1) and (P3), implies
\[
\rho_\zeta(\alpha *_\zeta \beta,H*0)\leq \rho_\zeta(\alpha,H)+I_\nu (\beta)
\]
where we have used the fact that $H*0$ and $H$ have the same mean value, that is,  $\int^1_0\int (H*0)_t\omega^n dt=\int^1_0\int H_t\omega^n dt$ and $\varphi_{H*0}^1=\varphi_H^1$ in $\widetilde{{\rm Ham}}(M,\omega)$. 

Next, we prove the second statement. It suffices to show that if $\alpha\in H_*(M;\Lambda)$, $\beta\in \hat{H}(M;\Lambda)$ are two nonzero classes with the property
\begin{equation}\label{e:condition}
\rho_\zeta(\alpha *_\zeta \beta, H)=\rho_\zeta(\alpha,H)+I_\nu(\beta),
\end{equation}
then ${\rm Fix}(\varphi_H^1)$ is homologically  nontrivial in $M$. For this end, we pick any open neighborhood $V$ of ${\rm Fix}(\varphi_H^1)$. We may assume that $M\setminus \overline{V}$ is nonempty since  nothing needs to be proved if ${\rm Fix}(\varphi_H^1)=M$. We shall prove that $\overline{V}$ is homologically nontrivial.  

Pick a $C^2$-small function $f:M\to\R$ such that $f|_{\overline{V}}\equiv 0$ and $f|_{M\setminus \overline{V}}<0$.  We shall prove the following

\begin{clm}\label{clm:spec}
For sufficiently small $\varepsilon>0$, we have 
\[
\rho_\zeta(\alpha *_\zeta \beta, H*\varepsilon f)=\rho_\zeta(\alpha *_\zeta \beta, H). 
\]
\end{clm}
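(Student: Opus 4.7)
The plan is to exploit three facts: (i) since $f\equiv 0$ on the open set $V$, the flow $\varphi_{\varepsilon f}^t$ restricts to the identity on $V$; (ii) consequently the fixed-point set and the action spectrum of $H*\varepsilon f$ coincide with those of $H$ for sufficiently small $\varepsilon$; and (iii) by Oh's theorem, $\spec(H)$ is a closed measure-zero (hence nowhere dense) subset of $\R$, so a continuous function on an interval landing in $\spec(H)$ must be constant. In particular, the triangle inequality and (\ref{e:condition}) will not be needed here; they are used only later to derive a contradiction from the claim.

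I would first check (ii) in detail. Since $f\equiv 0$ on $\overline V$, we have $X_f\equiv 0$ on $V$ and hence $\varphi_{\varepsilon f}^t|_V=\operatorname{id}_V$. Every $p\in\operatorname{Fix}(\varphi_H^1)\subset V$ therefore satisfies $\varphi_{H*\varepsilon f}^1(p)=\varphi_H^1(\varphi_{\varepsilon f}^1(p))=\varphi_H^1(p)=p$. Conversely, pick an open $U$ with $\operatorname{Fix}(\varphi_H^1)\subset U$ and $\overline U\subset V$; by compactness there is $\delta>0$ with $d(\varphi_H^1(x),x)>\delta$ for all $x\notin U$, and once $\varepsilon$ is so small that $\varphi_{\varepsilon f}^1$ lies $C^0$-within $\delta/2$ of $\operatorname{id}$ (automatic from the $C^2$-smallness of $\varepsilon f$), no fixed point of $\varphi_{H*\varepsilon f}^1$ can lie outside $U\subset V$. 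Thus $\operatorname{Fix}(\varphi_{H*\varepsilon f}^1)=\operatorname{Fix}(\varphi_H^1)$. For $p$ in this common fixed-point set, the orbit $x(t):=\varphi_{H*\varepsilon f}^t(p)$ equals $\varphi_H^{2t}(p)$ on $[0,\tfrac12]$ and remains at $p$ on $[\tfrac12,1]$ (since $X_{\varepsilon f}(p)=0$), so it is a time-reparameterization of the $H$-orbit through $p$; cappings correspond under the reparameterization, and since $f(p)=0$ one computes $\int_0^1(H*\varepsilon f)(t,x(t))\,dt=\int_0^1 H(s,\varphi_H^s(p))\,ds$. Hence $\mathcal{A}_{H*\varepsilon f}(x,\bar x)=\mathcal{A}_H(x_H,\bar x_H)$ for corresponding cappings, giving $\spec(H*\varepsilon f)=\spec(H)$ as subsets of $\R$.

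Next, the function $F\colon\varepsilon\mapsto\rho_\zeta(\alpha*_\zeta\beta,H*\varepsilon f)$ is continuous on $[0,\varepsilon_0]$ because $H*\varepsilon f$ depends continuously on $\varepsilon$ in $C^0$-topology and spectral invariants depend continuously on the Hamiltonian. By Proposition~\ref{prop:property}~(P2) together with rationality, $F(\varepsilon)\in\spec(H*\varepsilon f)=\spec(H)$ for all $\varepsilon\in[0,\varepsilon_0]$ (the identity at $\varepsilon=0$ being trivial). Since $\spec(H)$ is closed of measure zero in $\R$, every connected subset of it is a singleton (a nondegenerate interval in $\spec(H)$ would contradict measure zero), so the continuous image $F([0,\varepsilon_0])$ is a single point. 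Evaluating at $\varepsilon=0$ and invoking homotopy invariance (P3) --- $H*0$ and $H$ are both normalized and generate the same element of $\widetilde{\operatorname{Ham}}(M,\omega)$ --- identifies this common value with $\rho_\zeta(\alpha*_\zeta\beta,H)$, proving the claim.

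The main obstacle is the identification $\spec(H*\varepsilon f)=\spec(H)$; it rests essentially on the fact that $f$ vanishes on a full open neighborhood $V$ of $\operatorname{Fix}(\varphi_H^1)$ rather than merely on the fixed-point set itself. This openness is what simultaneously prevents new periodic orbits from appearing near $\operatorname{Fix}(\varphi_H^1)$ and prevents any action shift at the surviving ones. The remaining ingredients --- continuity of $\rho_\zeta$ in $H$, spectrality (P2), and the closed-measure-zero property of the action spectrum --- are standard.
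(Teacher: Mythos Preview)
Your proposal is correct and follows essentially the same approach as the paper: both arguments establish $\operatorname{Fix}(\varphi_{H*\varepsilon f}^1)=\operatorname{Fix}(\varphi_H^1)$ and $\spec(H*\varepsilon f)=\spec(H)$ for all small $\varepsilon$, and then combine continuity of $\rho_\zeta$ in the Hamiltonian with the measure-zero property of the action spectrum to force constancy along the family. The only cosmetic difference is that the paper parameterizes the homotopy by $s\in[0,1]$ at a fixed small $\varepsilon$ while you vary $\varepsilon$ directly, and you make the final identification $\rho_\zeta(\alpha*_\zeta\beta,H*0)=\rho_\zeta(\alpha*_\zeta\beta,H)$ via (P3) explicit.
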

\noindent\textbf{Proof of Claim~\ref{clm:spec}.} Consider now a family of spectral invariants $\rho_\zeta(\alpha *_\zeta \beta, H*s\varepsilon f)$ for $s\in[0,1]$. Here to guarantee that the concatenated Hamiltonians $H*s\varepsilon f$ make sense, one can assume that for some small $\epsilon>0$, $H=f\equiv0$ for $|t|<\epsilon$ by the reparameterization as before (cf. Section~\ref{subsec:concatenation}). Since $\rho_\zeta(\alpha *_\zeta \beta, H)$ is continuous in $H$ in $C^0$-topology and the action spectra $\spec(H*s\varepsilon f)$ have measure zero for all $s$, we only need to prove that for $\varepsilon>0$ small enough, the sets $\spec(H*s\varepsilon f)$ are the same for all $s\in[0,1]$. Note that
\[
{\rm Fix}(\varphi^1_{H*\varepsilon f})={\rm Fix}(\varphi_H^1)
\]
provided that $\varepsilon$ is sufficiently small. In fact, we have $\varphi^1_{H*\varepsilon f}=\varphi^1_{H}\circ \varphi^1_{\varepsilon f}$ and $\varphi^t_{\varepsilon f}|_V\equiv Id$. Clearly,  
${\rm Fix}(\varphi^1_{H*\varepsilon f})\supseteq {\rm Fix}(\varphi_H^1)$. Suppose now that $p\in M$ is a fixed point of $\varphi^1_{H*\varepsilon f}$, and hence $\varphi^1_{\varepsilon f}(p)=\varphi^{-1}_{H}(p)$. Since the set $M\setminus V$ is compact and $q\neq \varphi_H^{-1}(q)$ for any $q\in M\setminus V$ (remind that $V$ contains ${\rm Fix}(\varphi_H^1)$), one may choose $\varepsilon>0$ small enough such that $\varphi_{\varepsilon f}^1(q)=\varphi_f^\varepsilon(q)\neq \varphi_H^{-1}(q)$ for any $q\in M\setminus V$.  Therefore, $p\in V$, and hence $p=\varphi_H^{-1}(p)$. So ${\rm Fix}(\varphi^1_{H*\varepsilon f})\subseteq {\rm Fix}(\varphi_H^1)$. 

We now prove that $\spec(H*\varepsilon f)=\spec(H)$ for any given small $\varepsilon>0$. For each $x\in {\rm Fix}(\varphi_H^1)$, we let $x(t)=\varphi_H^t(x), t\in[0,1]$ and $\overline{x}:D^2\to M$ be a continuous disc such that $\overline{x}(e^{2\pi it})=x(t)$. It is easy to see that
\begin{equation}\notag
\varphi_{H*\varepsilon f}^t(x)=
\begin{cases}
\varphi_H^{2t}(x)&0\leq t\leq 1/2,\\
x&1/2\leq t\leq 1.\\
\end{cases}
\end{equation}

Corresponding to each critical point $(x,\overline{x})$ of $\ah$, we denote by $x'(t)=\varphi_{H*\varepsilon f}^t(x)$ and let $\overline{x}':D^2\to M$ be a topological disc given by
\begin{equation}\notag
\overline{x}'(re^{2\pi it})=
\begin{cases}
\overline{x}(re^{4\pi it})&(r,t)\in [0,1]\times [0,1/2],\\
\overline{x}(r)&(r,t)\in [0,1]\times [1/2,1].\\
\end{cases}
\end{equation}
Clearly, on the boundary of this disc, $\overline{x}'$ restricts as the periodic orbit $x'$. So we get a bijection 
$$\mathcal{J}:\crit(\ah)\longrightarrow\crit(\mathcal{A}_{H*\varepsilon f}),\quad (x,\overline{x})\mapsto (x',\overline{x}').$$
A straightforward computation yields $\mathcal{A}_{H*\varepsilon f}(x,\overline{x})=\ah(\mathcal{J}(x',\overline{x}'))$. This will result in $\spec(H*\varepsilon f)=\spec(H)$. Similarly, one can show that 
$\spec(H*s\varepsilon f)=\spec(H)$ for all $s$. This completes the proof of the claim.

Now we are in a position to finish the proof. By our assumption, for $\beta\in\widehat{H}(M;\Lambda)$, 
we may write $\beta=\sum_i\beta_i\otimes k_i$ with $0\neq\beta_i\in H_{*<2n}(M;\C)$ and $k_i\in\Lambda$. By Proposition~\ref{prop:property} (P1), (P4) and (P7),  for given sufficiently small $\varepsilon>0$, the following inequalities hold:
\begin{eqnarray}\label{e:f}
\rho_\zeta (\beta,\varepsilon f)&\leq& \max_i\big\{\rho_\zeta(\beta_i\otimes k_i,\varepsilon f)\big\}\notag\\
&=&\max_i\big\{\rho_\zeta(\beta_i,\varepsilon f)+\nu(k_i)\big\}\notag\\
&\leq&\max_i\big\{\rho_\zeta(\beta_i,\varepsilon f)\big\}+I_\nu(\beta). 
\end{eqnarray}
Then it follows from Proposition~\ref{prop:property} (P6) and Claim~\ref{clm:spec} yields 
\begin{eqnarray}\label{e:H}
\rho_\zeta (\alpha *_\zeta\beta,H)
&=& \rho_\zeta(\alpha *_\zeta\beta,H*\varepsilon f)\notag\\
&\leq&\rho_\zeta(\alpha,H)+\rho_\zeta(\beta,\varepsilon f)\notag\\
&\leq&\rho_\zeta(\alpha,H)+\max_i\big\{\rho_\zeta(\beta_i,\varepsilon f)\big\}+I_\nu(\beta), 
\end{eqnarray}
where in the last inequality we have used (\ref{e:f}). On the one hand, from (\ref{e:H}) we deduce that there exists $i_0\in\N$ such that 
\[
\rho_\zeta(\beta_{i_0},\varepsilon f)\geq \rho_\zeta (\alpha *_\zeta\beta,H)-\rho_\zeta(\alpha,H)-I_\nu(\beta). 
\]
Then, by (\ref{e:condition}) and Lemma~\ref{lem:computation}, we get $$c_{LS}(\beta_{i_0},\varepsilon f)=\rho_\zeta(\beta_{i_0},\varepsilon f)\geq 0.$$ 
On the other hand, it follows from Proposition~\ref{pp:minmax}.3 that
\[
c_{LS}(\beta_{i_0},\varepsilon f)=c_{LS}([M]\cap\beta_{i_0},\varepsilon f)\leq c_{LS}([M],\varepsilon f)=\max \varepsilon f=0.
\]
Consequently, we obtain
\[
c_{LS}([M]\cap\beta_{i_0},\varepsilon f)=c_{LS}([M],\varepsilon f)=0
\]
for some nonzero class $\beta_{i_0}\in H_{*<2n}(M;\C)$. Then by Proposition~\ref{pp:minmax}.4, the zero level set $\overline{V}$ of $f$ is homologically nontrivial. This completes the proof of the second statement. \qed

\section{Nonvanishing Gromov-Witten invariants}\label{sec:gw}

We begin with the following lemma whose proof is essentially due to Usher~\cite[Lemma~4.2]{Us} (see also~\cite[Lemma~A.3]{Mc} or \cite[Lemma~7.1]{Lu1}).

\begin{lem}\label{lem:kfoldprod}
    Let $(M^{2n},\omega)$ be a closed symplectic manifold which admits a nonzero mixed Gromov-Witten invariant of the form:
    \[
    {\rm GW}_{0,l+k+1,A}^{\{1,\ldots,l+1\}}(a_1,\ldots,a_l,[pt],b_1,\ldots,b_k)    
    \]
    where $A\in H_2(M;\mathbb{Z})/{\rm torsion}$, $a_1,\ldots, a_l\in H_{*<2n}(M;\Q)$ with $2\leq l\in\N$, and the classes $b_1,\ldots,b_k$ are rational homology classes of even degree. Then there is an open set of possible choice of deformation parameter $\zeta\in\oplus_{i=0}^{n-1}H_{2i}(M;\C)$ such that the $l$-fold product $a_1 *_\zeta,\ldots, *_\zeta a_l = \lambda[M]+\beta$ with $\beta\in H_{*<2n}(M;\Lambda_0)$ satisfies $\nu(\lambda)\geq -\langle [\omega],A\rangle$. 
\end{lem}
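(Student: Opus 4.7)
This proof is very close in structure to that of Lemma~\ref{lem:mixnon0}; the only genuine change is that one tracks the coefficient of the fundamental class $[M]$ rather than the coefficient of a generic dual basis vector. Pick a homogeneous basis $\{c_j\}_{j=1}^K$ of $H_*(M;\Q)$ with $c_1=[pt]$; then its Poincar\'e-intersection dual satisfies $c^1=[M]$ and $c^j\in H_{*<2n}(M;\Q)$ for $j\geq 2$. Using formula~(\ref{e:expand}), the coefficient of $[M]$ in $a_1 *_\zeta \cdots *_\zeta a_l$ equals
\[
\lambda=\sum_{B\in H_2(M;\Z)/{\rm tors}}\sum_{k\geq 0}\frac{1}{k!}\,{\rm GW}^{\{1,\ldots,l+1\}}_{0,l+k+1,B}\bigl(a_1,\ldots,a_l,[pt],\underbrace{\zeta,\ldots,\zeta}_k\bigr)\,T^{\langle[\omega],B\rangle},
\]
and the remaining coefficients assemble $\beta$. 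Since only $B$ representable by $J$-holomorphic stable maps contribute, every $T$-exponent is nonnegative, so $\lambda\in\Lambda_0$ and $\beta\in H_{*<2n}(M;\Lambda_0)$ automatically.

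To show that the coefficient of $T^{\langle[\omega],A\rangle}$ in $\lambda$ is nonzero on an open set of $\zeta$, I would copy the $(\vec x,\vec y)$-family construction from Lemma~\ref{lem:mixnon0}. First, the fundamental class axiom excludes $[M]$-components of the $b_j$'s, and the divisor axiom absorbs $H_{2n-2}$-components of the $b_j$'s into $B$-dependent scalar factors; the hypothesized nonvanishing then reduces to an invariant with $b_1,\ldots,b_k\in\oplus_{i=0}^{n-2}H_{2i}(M;\Q)$. Picking a basis $\xi_1,\ldots,\xi_s$ of $H_{2n-2}/{\rm tors}$ and extending to a basis $\xi_1,\ldots,\xi_N$ of $\oplus_{i=0}^{n-1}H_{2i}/{\rm tors}$, and invoking multilinearity and symmetry, one reduces to a nonzero invariant of monomial shape with multiplicity vector $\alpha=(\alpha_{s+1},\ldots,\alpha_N)$. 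Setting $\zeta=\zeta(\vec x,\vec y)=\sum_i x_i\xi_i+\sum_j y_j\xi_j$ and applying the divisor axiom to the $\xi_1,\ldots,\xi_s$ insertions packages them into exponentials, giving
\[
[\lambda]_{T^{\langle[\omega],A\rangle}}=\sum_{B:\langle[\omega],B\rangle=\langle[\omega],A\rangle}\Bigl(\prod_{i=1}^s e^{x_i(\xi_i\cap B)}\Bigr)\,P_B(\vec y),
\]
with $P_B\in\Q[\vec y]$, and the coefficient of $\prod_j y_j^{\alpha_j}$ in $P_A$ is (a nonzero multiple of) the hypothesized GW invariant.

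The single substantial input, identical to that in Lemma~\ref{lem:mixnon0}, is that because $\{\xi_1,\ldots,\xi_s\}$ is a basis of $H_{2n-2}/{\rm tors}$, the map $B\mapsto(\xi_1\cap B,\ldots,\xi_s\cap B)$ is injective, so the functions $\prod_i e^{x_i(\xi_i\cap B)}$ are linearly independent in $\vec x$ as $B$ varies. Combined with the nonvanishing of the $B=A$ coefficient, this makes $[\lambda]_{T^{\langle[\omega],A\rangle}}$ a nontrivial exponential polynomial in $(\vec x,\vec y)$, and so its zero locus is a proper analytic subvariety. On its open dense complement, $\lambda$ carries a nonzero $T^{\langle[\omega],A\rangle}$ summand, whence $\nu(\lambda)\geq -\langle[\omega],A\rangle$ by Definition~\ref{df:Nov}(2). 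The main obstacle is essentially bookkeeping --- correctly separating divisor-axiom contributions from residual polynomial ones and matching the dimension formula~(\ref{e:mixdim}) --- but the core nonvanishing mechanism is simply the linear independence of the exponentials together with the input from the hypothesized invariant at $B=A$.
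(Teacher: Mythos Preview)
Your proof is correct and follows essentially the same approach as the paper, which explicitly reduces the argument to that of Lemma~\ref{lem:mixnon0} after tracking the coefficient of $[M]$ via $c_1=[pt]$. The only step the paper spells out that you leave implicit is the verification that $A\neq 0$ (needed to apply the divisor axiom to the hypothesized invariant); this follows immediately from the fundamental class axiom for the $b_j$'s together with the dimension formula~(\ref{e:mixdim}) and $\deg(a_i)<2n$.
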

\begin{proof} First we show that the class $A$ appearing in our nonzero 
Gromov-Witten invariant is nonzero. 
According to the fundamental class axiom of mixed Gromov-Witten invariants (see~\cite[Proposition~7.5.7]{MS}), none of the classes $b_1,\ldots, b_k$ can be a multiple of the fundamental class.  If $A=0$, then the dimension formula~(\ref{e:mixdim}) yields
\[
 \sum_{i=1}^l(2n-\deg(a_i))+2n+\sum_{j=1}^k(2n-2-\deg(b_j))=2n
\]
contradicting our assumption that $2n-\deg(a_i)\geq 1$ for all $i$.  

By the divisor axiom of the mixed Gromov-Witten invariants, one may assume that $b_1,\ldots, b_k\in \oplus_{i=0}^{n-2}H_{2i}(M;\Q)$. 
Let $\xi_1,\ldots,\xi_N$ be a homogeneous basis for the $\Z$-module $\oplus_{i=0}^{n-1} H_{2i}(M;\mathbb{Z})/{\rm torsion}$. We may order them so that $\xi_1,\ldots,\xi_s$ form a basis for $H_{2n-2}(M;\mathbb{Z})/{\rm torsion}$ for some $s<N$. Then by the multilinearity and symmetry properties of the Gromov-Witten invariants, we may further assume that our nonzero Gromov-Witten invariant is of the form
\begin{equation}\label{e:ngw}
{\rm GW}_{0,l+1+\sum_j\alpha_j,A}^{\{1,\ldots,l+1\}}(a_1,\ldots,a_l,[pt],\underbrace{\xi_{s+1},\ldots,\xi_{s+1}}_{\alpha_{s+1}},\ldots,\underbrace{\xi_{N},\ldots,\xi_{N}}_{\alpha_{N}})    
\end{equation}
for some $\alpha=(\alpha_{s+1},\ldots,\alpha_N)\in\N^{N-s}$ with $\sum_{j=s+1}^N\alpha_j\leq k$. 

For $\vec{x}=(x_1,\ldots,x_s)\in\C^s$ and $\vec{y}=(y_{s+1},\ldots,y_N)\in\C^{N-s}$, we put
\[
\zeta(x,y)=\sum_{i=1}^sx_i\xi_i+\sum_{j=s+1}^Ny_j\xi_j.    
\]
Let us consider the class $a_1 *_\zeta,\ldots, *_\zeta a_l$ as a function of $(\vec{x},\vec{y})$. Take a homogeneous basis  $\{c_j\}_{j=1}^K$ for $H_*(M;\Q)$ with dual basis $\{c^j\}_{j=1}^K$ with respect to the Poincar\'{e} homology intersection pairing $\circ$. According to the symmetry properties of the Gromov-Witten invariants, after expanding out the formula for $a_1 *_\zeta,\ldots, *_\zeta a_l$, we see that the coefficient on the class $[M]$ is an expression of the form 
\[
\sum_{g\in\Gamma_\omega}\sum_{\beta=(\beta_{s+1},\ldots,\beta_N)}f_{g,\beta}(\vec{x})y_{s+1}^{\beta_{s+1}}\cdots y_N^{\beta_N}T^g    
\]
where $f_{g,\beta}(\vec{x})$ is a function of $\vec{x}$ which is identically zero for all but finitely many $\beta$ (by the dimension formula~(\ref{e:mixdim})). In particular, for the symplectic period $g=\langle [\omega],A\rangle$ and the value $\beta=\alpha$, we obtain
\begin{eqnarray}
\bigg(&\prod\limits_{i=s+1}^N&\big(\alpha_i !\big)\bigg)f_{\langle [\omega],A\rangle,\alpha}(\vec{x})\notag\\&=& \sum_{\stackrel{B\in H_2(M;\mathbb{Z})/{\rm torsion},}{\langle [\omega],B\rangle=\langle [\omega],A\rangle}}\bigg({\rm GW}_{0,l+1+\sum\alpha_j,B}^{\{1,\ldots,l+1\}}(a_1,\ldots,a_l,[pt],\underbrace{\xi_{s+1},\ldots\xi_{s+1}}_{\alpha_{s+1}},\ldots,\underbrace{\xi_{N},\ldots,\xi_{N}}_{\alpha_{N}}) \notag\\
&&\hspace{10cm}\times \bigg(\prod_{i=1}^s(e^{x_i})^{\xi_i\cap B}\bigg)
\bigg).
\notag
\end{eqnarray}
We notice that the coefficient on $\prod_{i=1}^s(e^{x_i})^{\xi_i\cap A}$ in the above sum is nonzero because of our assumed nonzero Gromov-Witten~(\ref{e:ngw}).   Now $H_{2n-2}(M;\mathbb{Z})/{\rm torsion}$ has $\{\xi_i\}_{i=1}^s$ as a basis, the map $B\mapsto (\xi_1\circ B,\ldots,\xi_s\circ B)$ is injective. Consequently,  in the expansion of  $a_1 *_\zeta,\ldots, *_\zeta a_l$, the coefficient of $T^{\langle [\omega],A\rangle}[M]$ is a polynomial in $e^{x_i}$ and $y_j$ having a nonzero coefficient multiplying
\[
\bigg(\prod_{i=1}^s(e^{x_i})^{\xi_i\cap A}\bigg)\prod_{j=s+1}^Ny_j^{\alpha_j}. 
\]
Then it follows from the fact that this polynomial does not vanish identically that there is an open dense set of choices of $(\vec{x},\vec{y})$ at which the coefficient on $T^{\langle [\omega],A\rangle}[M]$ is not zero. This proves that for such  choice of $(\vec{x},\vec{y})$,  writing $a_1 *_{\zeta(\vec{x},\vec{y})},\ldots, *_{\zeta(\vec{x},\vec{y})}a_l=\lambda[M]+\beta$  with $\beta\in H_{*<2n}(M;\Lambda_0)$, we have $\nu(\lambda)\geq -\langle [\omega],A\rangle$.  
\end{proof}

Before finishing this section, we notice that 
the argument used in the proof of Lemma~\ref{lem:kfoldprod} also yields the following 
\begin{lem}\label{lem:mixnon0}
Let $(M^{2n},\omega)$ be a closed symplectic manifold which admits a nonzero mixed Gromov-Witten invariant of the form:
\[
{\rm GW}_{0,l+k+1,A}^{\{1,\ldots,l+1\}}(a,\ldots,a,b_0,\ldots,b_k)    
\]
where $A\in H_2(M;\mathbb{Z})/{\rm torsion}$, the class $a\in H_{*<2n}(M;\Q)$ occurs $l$ times for some $l>2n$, and $b_0,\ldots,b_k$ (with $k\geq 0$) are  rational homology classes of even degree with $b_0\in  H_{*<2n}(M;\Q)$. Then there is an open dense set of possible choices of the deformation parameter $\zeta\in\oplus_{i=0}^{n-1}H_{2i}(M;\C)$ such that   
$ (\underbrace{a *_\zeta,\ldots, *_\zeta a}_l)_{\langle[\omega],A\rangle}\neq 0$, and in particular the $l$-fold product of $a$ w.r.t. $*_\zeta$ is nonzero.

\end{lem}

\noindent\textbf{Sketch of the proof of Lemma~\ref{lem:mixnon0}.} 
First note that the class $A$ appearing in the mixed Gromov-Witten invariant cannot be zero. Indeed, the fundamental class axiom of Gromov-Witten invariants (cf. \cite[Proposition~7.5.7]{MS}) implies that none of the classes $b_1,\ldots, b_k$ can be $\lambda [M]$ for some $\lambda\in \Q\setminus\{0\}$. Assume now that $A=0$ in $H_2(M;\mathbb{Z})/{\rm torsion}$. Using the dimension formula~(\ref{e:mixdim}), one finds that
\[
 l(2n-\deg(a))+2n-\deg(b_0)+\sum_{j=1}^k(2n-2-\deg(b_j))=2n,
\]
contradicting our assumption that $l>2n$ and $2n-\deg(a)\geq 1$. So $A\neq 0$ in $H_2(M;\mathbb{Z})/{\rm torsion}$.  The remaining argument of the proof parallels that of Lemma~\ref{lem:kfoldprod} and is therefore omitted.\qed

\section{Proofs of Theorems~\ref{thm:onept}-\ref{thm: factorization} and Corollary~\ref{cor:hbar}}\label{sec:mainpf}
\noindent\textbf{Proof of Theorem~\ref{thm:onept}.}
Let $\varphi=\varphi_H^1$ for some $H\in\rH$. Without loss of generality, we may assume that the number of elements in ${\rm Fix}(\varphi_H^1)$ is finite, otherwise, nothing needs to be proved.  The proof of the theorem is divided into two parts.

\noindent (1) Since our symplectic manifold $(M,\omega)$ carries the nonzero mixed Gromov-Witten invariant, by Lemma~\ref{lem:kfoldprod}, there is a class $\zeta\in\oplus_{i=0}^{n-1}H(M;\C)$ such that 
\begin{equation}\label{e:a1al}
    a_1 *_\zeta\cdots *_\zeta a_l = \lambda[M]+\beta
\end{equation} 
with $\beta\in H_{*<2n}(M;\Lambda_0)$ and $\nu(\lambda)\geq -\langle [\omega],A\rangle$. Set $\alpha_0=[M]$ and 
\[
\alpha_j=a_1*_\zeta\cdots *_\zeta a_j,\quad j=1,\ldots, l.    
\]
It follows from Corollary~\ref{cor:specineq} that there exist elements $\tilde{x}_0=(x_0,\overline{x}_0),\ldots,\tilde{x}_l=(x_{l},\overline{x}_{l})\in\crit(\ah)$ such that 
\begin{equation}\label{e:lactions}
    \rho_\zeta(\alpha_l,H)=\ah(x_{l},\overline{x}_{l})<\ldots<\rho_\zeta(\alpha_0,H)=\ah(x_{0},\overline{x}_{0}).
\end{equation}
According to Theorem~\ref{thm:LjS}, we have
\begin{gather}
\rho_\zeta(\beta,H)=\rho_\zeta([M]*_\zeta\beta,H)<\rho_\zeta([M],H)+I_\nu(\beta)\label{e:comparisonVal} 
\end{gather}
By our hypothesis, the class $A$ minimizes $\langle [\omega],A\rangle$ among all $B\in H_2(M)$ with a nonzero mixed Gromov-Witten invariant of the form
\[
{\rm GW}_{0,l+1+k,B}^{1,\dots,l+1}(a_1,\ldots,a_l,c,b_1,\ldots,b_k)    
\]
for any $c\in H_*(M;\Q)$ and even-degree classes $b_j$.  By the definition of $l$-fold  products (cf.~(\ref{e:expand})), this implies  
\[
I_\nu(\beta)\leq I_\nu(\lambda [M])=\nu(\lambda)    
\]
which, combining with (\ref{e:comparisonVal}), yields
\[
    \rho_\zeta(\beta,H)<\rho_\zeta(\lambda [M],H).
\]
Hence, by Proposition~\ref{prop:property} (P7), we get
\[
\rho_\zeta(\alpha_l,H)=\rho_\zeta(\lambda[M]+\beta,H)\geq \rho_\zeta([M],H)+\nu(\lambda)\geq \rho_\zeta([M],H)-\langle [\omega],A \rangle,     
\]
equivalently, 
\begin{equation}\label{e:normbound}
\rho_\zeta(\alpha_0,H)-\rho_\zeta(\alpha_l,H)\leq\langle [\omega],A \rangle. 
\end{equation}

\noindent (2) For estimating $\#{\rm Fix}(\varphi^1_H)$, we set
\[
\mathcal{E}=\big\{\ah(\widetilde{x}_i)|i=0,\ldots,l\big\},     
\]
and define an equivalent relation on $\mathcal{E}$ as follows: for $a,b\in\mathcal{E}$,  $a\sim b$ if and only if $a-b$ is a symplectic period, i.e., $a-b\in\Gamma_\omega$. Denote by $\widehat{\mathcal{E}}$ the set of such equivalence classes. Hence, the cardinality $\#\widehat{\mathcal{E}}$ gives a lower bound for the number of distinct Hamiltonian periodic orbits $x_i$, and hence for $\#{\rm Fix}(\varphi_H^1)$. 

Let $k=\langle [\omega], A \rangle/p$ (recall that $p\in\Gamma_\omega$ is the positive generator of $\Gamma_\omega$).   According to (\ref{e:lactions}) and (\ref{e:normbound}), there is at most one equivalence class containing $k+1$ elements  in $\mathcal{E}$, and each remaining class contains at most $k$ elements. Therefore,
\[
\#{\rm Fix}(\varphi^1_H)\geq \#\widehat{\mathcal{E}}\geq 1+\bigg\lceil\frac{l+1-k-1}{k}\bigg\rceil=\bigg\lceil\frac{pl}{\langle [\omega],A \rangle}\bigg\rceil.    
\]
This completes the proof of the estimate (\ref{e:thm1}). \qed
\vspace{1cm}\\
\noindent\textbf{Proof of Corollary~\ref{cor:hbar}.}
\noindent If $(M,\omega)$ has a nonzero Gromov-Witten invariant of the form $${\rm GW}_{0,l+1,A}^{\{1,\ldots,l+1\}}(a_1,\ldots,a_l,[pt]),$$  then it follows from two facts that $\beta\in K^{\rm eff}(M,\omega)$. First, when $\sharp I=3$, the mixed Gromov-Witten invariant 
${\rm GW}_{0,k,\beta}^I(\beta_1,\ldots,\beta_k)$ coincides with the usual Gromov-Witten invariant ${\rm GW}_{0,k,\beta}(\beta_1,\ldots,\beta_k)$. Second, the  splitting property of mixed Gromov-Witten invariants  (cf.~\cite[Theorem~7.5.10]{MS}) applies. Since the class $A$ has the minimal symplectic area among $\beta$ in $K^{\rm eff}(M,\omega)$, the conditions of Theorem~\ref{thm:onept} are met. So we have
\begin{equation}\label{e:Al}
    \#{\rm Fix}(\varphi^1_H)\geq \bigg\lceil\frac{pl}{\langle [\omega],A \rangle}\bigg\rceil.    
\end{equation}
We now estimate the right hand side of (\ref{e:Al}). The nonvanishing of our Gromov-Witten invariant implies 
\[
  \sum_{i=1}^l(2n-{\rm deg}(a_i))+2n=2n+2\langle c_1(TM),A\rangle  
\]
from which we deduce that
\begin{equation}\label{e:c1}
l\cdot\min_i\{2n-{\rm deg}(a_i)\}\leq 2\langle c_1(TM),A\rangle\leq l\cdot \max_i\{2n-{\rm deg}(a_i)\}.  
\end{equation}
Hence, $\langle c_1(TM),A\rangle>0$ because $2n-{\rm deg}(a_i)\geq 1$ for all $i$ (so the right hand of the inequality (\ref{e:thm2}) is positive).  Combining (\ref{e:Al}) and (\ref{e:c1}) yields the estimate~(\ref{e:thm2}). \qed
\vspace{1cm}\\

\noindent\textbf{Proof of Theorem~\ref{thm:Arnoldspec}.}  Given $\varphi\in\ham(M,\omega)$, take $H\in\rH$ such that $\varphi=\varphi^1_H$. Without loss of generality, we may assume that the number of elements in ${\rm Fix}(\varphi_H^1)$ is finite, otherwise, nothing needs to be proved.

The proof proceeds in two steps.

\noindent\textbf{Step 1.} By the definition of cuplength (cf. Section~\ref{sec:1}) and the Poincar\'{e} duality, there exist $u_i\in H_{<2n}(M,\C)$, $i=1\ldots,k$ such that $u_1\cap\cdots\cap u_k=[pt]$ and $\cl(M)=k+1$.  Take classes $\alpha_0,\dots,\alpha_k$ in $H_*(M;\Lambda)$ given by
\[
\alpha_0=[M]\quad\hbox{and}\quad \alpha_{i}=\alpha_{i-1} *_\zeta u_{i},\;i=1\ldots,k.
\]
Then by Theorem~\ref{thm:LjS}  we have
\[
\rho_\zeta(\alpha_i,H)<\rho_\zeta(\alpha_{i-1},H)
\]
for each $i\in\{1,\ldots,k\}$. Note that the big quantum product is a quantum deformation of the homological intersection product in $H_*(M;\C)$. In particular, we have
\[
\alpha_k=[pt]+\hbox{h.o.t.}
\]
where  h.o.t. stands for higher-order terms coming from deformations by $J$-holomorphic spheres. This implies that each $\alpha_i$ is nonzero, and that ${\textstyle\prod}(\alpha_k,[M])\neq 0$. Thus, it follows from Propositions~\ref{prop:property}(P8) that
\begin{equation}\notag
\rho_\zeta(\alpha_k,H)\geq -\rho_\zeta([M],\overline{H}).    
\end{equation}
Consequently, we get 
\begin{equation}\label{e:gabound}
\rho_\zeta(\alpha_k,H)-\rho_\zeta(\alpha_0,H)\leq\gamma_\zeta(H).
\end{equation}

\noindent\textbf{Step 2.} 
The next goal is to prove that 
\begin{equation}\label{e:fixpts}
\#{\rm Fix}(\varphi)\geq  \frac{ {\rm cuplength}(M;\C)}{\lfloor \gamma_\zeta(H)/p\big\rfloor+1}.
\end{equation}
According to Corollary~\ref{cor:specineq}, there exist $k+1$ elements $\widetilde{x}_i=(x_i,\overline{x}_i)\in\crit(\ah)$  such that
\begin{equation}\label{e:critvalue}
\rho_\zeta(\alpha_k,H)=\ah(\widetilde{x}_k)<\ldots<\rho_\zeta(\alpha_0,H)=\ah(\widetilde{x}_0).
\end{equation}
As in the second part of the proof of Theorem~\ref{thm:onept}, we let $\mathcal{E}=\{\ah(x_{i},\overline{x}_{i})\;|\;i=0,\ldots,k\}$, and consider the set $\widehat{\mathcal{E}}$ consisting of equivalence classes. It follows from (\ref{e:gabound}) that each equivalence class in $\widehat{\mathcal{E}}$ contains at most $$\bigg\lfloor\frac{\gamma_\zeta(H)}{p}\bigg\rfloor+1$$
elements in $\mathcal{E}$. So we get 
\[
    \#{\rm Fix}(\varphi_H^1)\geq \#\widehat{\mathcal{E}}\geq \frac{k+1}{\big\lfloor(\gamma_\zeta(H)/p\big\rfloor+1}.
\]
Since the above inequality holds for any $H\in\rH$ with $\varphi=\varphi_H^1$, we find that 
\[
    \#{\rm Fix}(\varphi_H^1)\geq \frac{k+1}{\big\lfloor(\gamma_\zeta(\varphi)/p\big\rfloor+1}.
\]
\qed

\vspace{1cm}
\noindent\textbf{Proof of Theorem~\ref{thm:bcl}.}  Suppose $\bcl(g)=k+1$ for some $g\in\Gamma_\omega$ and $\zeta\in H_*(M,\Lambda_0)$.  Let $\varphi=\varphi_H^1$ for some $H\in\rH$, and assume $ \#{\rm Fix}(\varphi_H^1)$ is finite. Then there exist homology classes $a_1,\ldots,a_k\in H_{<2n}(M;\C)$ such that 
$(\alpha_1*_\zeta\cdots *_\zeta\alpha_k)_{g}\neq 0$
with $g=\langle [\omega],A\rangle$ for some $A\in H_*(M;\Z)/{\rm torsion}$. 
Hence, 
\[
\alpha_1*_\zeta\cdots *_\zeta\alpha_k=\sum_{h\in\Gamma_\omega} (\alpha_1*_\zeta\cdots *_\zeta\alpha_k)_h T^h\neq 0.
\]
Set $\beta=\alpha_1*_\zeta\cdots *_\zeta\alpha_k$. Then $I_\nu(\beta)\geq -g$.  By Proposition~\ref{prop:property} (P1), we obtain that 
\begin{equation}\label{e:P1}
I_\nu(\beta)+\int^1_0\min H_t dt\leq \rho_\zeta(\beta,H)\quad\hbox{and}\quad \rho_\zeta([M],H)\leq \int^1_0\max H_t dt. 
\end{equation}
According to Corollary~\ref{cor:specineq}, there exist $k+1$ critical points $(x_0,\overline{x}_0),\ldots,(x_{k},\overline{x}_{k})$ of $\ah$ such that 
\[
    \rho_\zeta(\beta,H)=\ah(x_{k},\overline{x}_{k})<\ldots<\rho_\zeta([M],H)=\ah(x_{0},\overline{x}_{0}).
\]
This, together with~(\ref{e:P1}), gives 
\begin{equation}\label{e:actionbd}
-g+\int^1_0\min H_t dt\leq \ah(x_{k},\overline{x}_{k})<\ldots <  \ah(x_{0},\overline{x}_{0})\leq \int^1_0\max H_t dt. 
\end{equation}
As in the proof of Theorem~\ref{thm:onept}(2),  let $\mathcal{E}=\{\ah(x_{i},\overline{x}_{i})\;|\;i=0,\ldots,k\}$ and consider the set $\widehat{\mathcal{E}}$ consisting of equivalence classes under the relation $a\sim b$ iff $a-b\in\Gamma_\omega$. It follows from (\ref{e:actionbd}) that each equivalence class in $\widehat{\mathcal{E}}$ contains at most $$\bigg\lfloor\frac{\|H\|+g}{p}\bigg\rfloor+1$$
elements in $\mathcal{E}$. So we get 
\[
    \#{\rm Fix}(\varphi_H^1)\geq \#\widehat{\mathcal{E}}\geq \bigg\lceil\frac{k+1}{\big\lfloor(\|H\|+g)/p\big\rfloor+1} \bigg\rceil\geq \bigg\lceil\frac{p(k+1)}{\|H\|+g+p}\bigg\rceil.
\]
Therefore, the desired estimate for symplectic fixed points in the theorem follows from the above inequality immediately. \qed

\vspace{1cm}
\noindent\textbf{Proof of Theorem~\ref{thm:schtype}.} According to Lemma~\ref{lem:mixnon0}, for  each integer $l> 2n$, there exists a class $\zeta_l\in\oplus_{i=0}^{n-1}H_{2i}(M;\C)$ such that $(\underbrace{a *_{\zeta_l},\ldots, *_{\zeta_l}a}_l)_{g_l}\neq 0$ for $g_l=\langle[\omega],A_l\rangle$, and hence ${\rm qcl}_{\zeta_l}(g_l)\geq l+1$.
Moreover, for each such $l$, by (\ref{e:ngw}), we have the nonzero Gromov-Witten invariant
\begin{equation}\label{e:gwl}
    {\rm GW}_{0,l+1+K,A_l}^{\{1,\ldots,l+1\}}(\underbrace{a,\ldots,a}_l,b^l,\eta_1^l,\ldots,\eta_{K}^l)   
    \end{equation}
for some homogeneous class $b^l\in H_*(M;\Q)$ of even degree, and homogeneous classes $\eta_1^l\ldots,\eta_K^l\in \oplus_{i=0}^{n-2}H_{2i}(M;\Q)$ with $0\leq K=K(l)\leq k(l)$ (corresponding to $K=0$ there are no inserted classes $\eta_i^l$). Then it follows from Theorem~\ref{thm:bcl} that for any $\varphi\in\ham(M,\omega)$, 
\begin{equation}\label{e:l}
\#{\rm Fix}(\varphi)\geq 
    \limsup_l\bigg\lceil \frac{p(l+1)}{p+g_l+\|\varphi\|} \bigg\rceil. 
\end{equation}
We now estimate the right hand side of (\ref{e:l}). Since the invariant (\ref{e:gwl}) does not vanish, according to the dimension formula~(\ref{e:mixdim}), we obtain 
\begin{equation}\label{e:ldeg}
 l(2n-\deg(a))+2n-\deg(b^l)+\sum_{j=1}^K(2n-\deg(\eta^l_j))=2n+2\langle c_1(TM),A_l\rangle+2K. 
\end{equation}
Note that $2n-\deg(\eta^l_j)\geq 4$ for all $l>2n$ and all $j\in\{1,\ldots,K(l)\}$. Then (\ref{e:ldeg}) yields
\begin{itemize}
\item[(a)] $\langle c_1(TM),A_l\rangle\to\infty$ as $l\to \infty$,
\item[(b)] $\frac{2}{l}\langle c_1(TM),A_l\rangle\leq 2n-\deg(a)+\frac{1}{\ell}\sum_{j=1}^K(2n-2-\deg(\eta^l_j))-\frac{1}{l}\deg(b^l)$ for all $l$. 
\end{itemize}
If the function $l\mapsto k(l)$ is bounded, then  $K(l)$ is also bounded. In this case,  (b)  implies
\[
\limsup_{l\to\infty}  \frac{l}{\langle c_1(TM),A_l\rangle}\geq \frac{2}{2n-\deg(a)}. 
\]
Since $\omega|_{\pi_2(M)}=\lambda c_1|_{\pi_2(M)}\neq 0$ for some $\lambda>0$, we get that $\omega(A_l)\to\infty$ as $l\to\infty$, and $p=\lambda N$. 
Hence, 
\[\small
    \limsup_{l\to\infty}  \frac{p(l+1)}{p+g_l+\|\varphi\|}\geq \limsup_l \frac{l}{\langle c_1(TM),A_l\rangle}\cdot\frac{\lambda N}{(p+\|\varphi\|)/\langle c_1(TM),A_l\rangle+\lambda}\geq \frac{2N}{2n-\deg(a)}
\]
from which, together with (\ref{e:l}), we obtain the estimate as required. \qed

\vspace{1cm}
\noindent\textbf{Proof of Theorem~\ref{thm: factorization}.} 
Without loss of generality, we may assume that $\varphi=\varphi_H^1$ for some $H\in\rH$ and that $\sharp {\rm Fix}(\varphi_H^1)$ is finite. Set $\alpha_0=[M]$ and $\alpha_i=\alpha_{i-1}*_\zeta u_i$ for $i\in \{1,\ldots,k\}$. 
By Corollary~\ref{cor:specineq}, there exist $l+1$ critical points $(x_0,\overline{x}_0),\ldots,(x_{k},\overline{x}_{k})$ of $\ah$ such that 
\begin{equation}\label{e:fact}
    \rho_\zeta(\alpha_l,H)=\ah(x_{l},\overline{x}_{l})<\ldots<\rho_\zeta([M],H)=\ah(x_{0},\overline{x}_{0}).
\end{equation}
According to our assumption, $\alpha_l=zT^{gp}[M]+\beta$ with $\beta\in H_{<2n}(M;\Lambda)$ and $I_\nu(\beta)\leq -gp$. It follows from Theorem~\ref{thm:LjS} that 
\[
\rho_\zeta(\beta,H)=\rho_\zeta([M]*_\zeta\beta,H)<\rho_\zeta([M],H)+I_\nu(\beta).
\]
Hence, using Proposition~\ref{prop:property} (P4) and (P5), we see that
\[
\rho_\zeta(\beta,H)<\rho_\zeta([M],H)-gp=  \rho_\zeta(zT^{gp}[M],H).
\]
This, together with Proposition~\ref{prop:property} (P7), implies that 
\[
 \rho_\zeta(\alpha_l,H)= \rho_\zeta(zT^{gp}[M],H)=\rho_\zeta([M],H)-gp. 
\]
Consequently, 
\begin{equation}\label{e:factbd}
\rho_\zeta([M],H)- \rho_\zeta(\alpha_l,H)= gp.
\end{equation}
Now we let $\mathcal{E}=\{\ah(x_{i},\overline{x}_{i})\;|\;i=0,\ldots,k\}$, and consider the set $\widehat{\mathcal{E}}$ consisting of equivalence classes as in the proof of Theorem~\ref{thm:onept}. Then by (\ref{e:fact}) and (\ref{e:factbd}) there is at most one equivalence class containing $g+1$ elements  in $\mathcal{E}$, and each remaining class contains at most $g$ elements. So we obtain 
\[
\#{\rm Fix}(\varphi^1_H)\geq 1+\bigg\lceil\frac{l+1-g-1}{g}\bigg\rceil=\bigg\lceil\frac{l}{g}\bigg\rceil.    
\]
The proof is complete.\qed\\

\vspace{1cm}
\noindent\textbf{Proof of Theorem~\ref{e:ptsInv}.} 
 Let $\varphi=\varphi_H^1$ for some $H\in\rH$, and assume $ \#{\rm Fix}(\varphi_H^1)$ is finite. By the definition of cuplength and the Poincar\'{e} duality, there exist $u_i\in H_{<2n}(M,\C)$, $i=1\ldots,k$ such that $u_1\cap\cdots\cap u_k=[pt]$ and $\cl(M)=k+1$.
This implies
\begin{equation}\label{e:cupdim}
\sum_{i=1}^k2n-{\rm deg}(u_i)=2n.
\end{equation}
In what follows, we will just write $\rho$ instead of $\rho_0$ to stand for the spectral invariant associated to $\zeta=0$, and write $*$ instead of $*_0$ to stand for the undeformed product. 

We now proceeds in two steps.

Firstly, we take classes $\alpha_0,\dots,\alpha_k$ in $H_*(M;\Lambda)$ such that
\[
\alpha_0=[M]\quad\hbox{and}\quad \alpha_{i}=\alpha_{i-1} * u_{i},\;i=1\ldots,k.
\]
 According to our assumption, 
$[pt]*\beta=T^{gp}[M]$ with $\beta\in H_{<2n}(M;\Lambda_0)$. Clearly, $I_\nu(\beta)\leq 0$.  We will prove that either 
 \begin{equation}\label{e:N}
\#{\rm Fix}(\varphi^1_H)\geq k. 
 \end{equation}
or
 \begin{equation}\label{e:alpha}
 \rho(\alpha_0,H)-\rho(\alpha_k,H)<gp. 
 \end{equation}
 
We notice that 
\begin{equation}\notag
u_1*\cdots*u_k=\sum_{A\in H_2(M;\Z)}\sum_{j=1}^K{\rm GW}_{0,k+1,A}^{\{1,\ldots,k+1\}}(u_1\ldots,u_k,e_j)T^{\langle [\omega],A\rangle}e^j    
\end{equation}
where  $\{e_j\}_{j=1}^K$ is a homogeneous basis for $H_*(M;\C)$ with dual basis $\{e^j\}_{j=1}^K$. If the Gromov-Witten invariant
\[{\rm GW}_{0,k+1,A}^{\{1,\ldots,k+1\}}(u_1\ldots,u_k,e_j)
\]
is nonzero, then we have the dimension equality
\[
\sum_{i=1}^k2n-{\rm deg}(u_i)+2n-{\rm deg}(e_j)=2n+2\langle c_1(TM),A\rangle.
\]
This, combining with (\ref{e:cupdim}), yields
\[
2\langle c_1(TM),A\rangle=2n-{\rm deg}(e_j).
\]
Since the minimal Chern number of $M$ is assumed to be not less than $n$, we must have: either (i) $A=0$ and ${\rm deg}(e_j)=2n$, or (ii) $\langle c_1(TM),A\rangle=n$ and ${\rm deg}(e_j)=0$. We now discuss in two cases:

Case (a).  (ii) holds for some class $A$.  Then, we obtain 
\[{\rm GW}_{0,k+1,A}^{\{1,\ldots,k+1\}}(u_1\ldots,u_k,[pt])\neq 0.
\]
The assumption that $(M,\omega)$ is monotone implies that the class $A$ achieves the minimal symplectic area, i.e., $p=\langle [\omega],A\rangle$. 
According to Theorem~\ref{thm:onept}, we obtain (\ref{e:N}). 

Case (b).  (ii) does not hold for any $A$, then case (i) holds. When $N\geq n+1$, it must belong to this case. 
Therefore,
\begin{equation}\label{e:pt}
\alpha_k=u_1*\cdots*u_k=[pt].
\end{equation}
According to Theorem~\ref{thm:LjS}, we see that 
\[-gp+\rho([M],H)=\rho([pt]*\beta)<\rho([pt],H)+I_\nu(\beta)\leq\rho([pt],H).\]
Hence,
\begin{equation}\label{e:M-pt}
\rho([M],H)-\rho([pt],H)<gp.
\end{equation}
Combining (\ref{e:pt}) and (\ref{e:M-pt}) yields (\ref{e:alpha}). 

Secondly, corresponding to case (b), we will prove 
 \begin{equation}\label{e:lprod}
\#{\rm Fix}(\varphi^1_H)\geq  \bigg\lceil \frac{k+1}{g} \bigg\rceil.
 \end{equation}
In fact, from (\ref{e:pt}) we see that
\[
[M]*u_1*\cdots*u_k=[pt]\neq 0
\]
According to Corollary~\ref{cor:specineq}, there are $k+1$ critical points $(x_0,\overline{x}_0),\ldots,(x_{k},\overline{x}_{k})$ of $\ah$ such that 
\begin{equation}\label{e:k+1critpts}
    \rho_\zeta(\alpha_k,H)=\ah(x_{k},\overline{x}_{k})<\ldots<\rho_\zeta(\alpha_0,H)=\ah(x_{0},\overline{x}_{0}).
\end{equation}
Let $\mathcal{E}=\{\ah(x_{i},\overline{x}_{i})\;|\;i=0,\ldots,k\}$, and consider the set $\widehat{\mathcal{E}}$ consisting of equivalence classes as in the proof of Theorem~\ref{thm:onept}.
It follows from (\ref{e:alpha}) and (\ref{e:k+1critpts}) that each equivalence class in $\widehat{\mathcal{E}}$ contains at most $g$ elements in $\mathcal{E}$. This gives rise to (\ref{e:lprod}). 

Summing up the two cases concludes the desired estimates. 
\qed

\appendix
\setcounter{equation}{0} 
\renewcommand{\theequation}{A.\arabic{equation}}
\setcounter{thm}{0} 
\renewcommand{\thethm}{A.\arabic{thm}}

\section{Symplectic toric manifolds}\label{Sec:toric}
\setcounter{thm}{0} 
\renewcommand{\thethm}{A.\arabic{thm}}

We recall here a definition and some facts on symplectic toric manifolds, and refer the reader to the books~\cite{Au,MS} for more details on this subject. 

Let $\C^n$ be the standard $2n$-dimensional real vector space equipped with the standard symplectic form $\omega_0$, i.e., the imaginary part of $-\frac{1}{2}\sum dz_i\wedge d\bar{z}_i$. Let $\T^k$ be a $k$-dimensional torus with Lie algebra $\mathfrak{t}$. Denote the integer lattice of $\mathfrak{t}$ by 
\[
\Delta:=\{\xi\in\fT\;|\;\exp (\xi)=1\}
\]
and its dual lattice by 
\[
\Delta^*:=\{w\in \mathfrak{t}^*|\langle w,\xi\rangle\in\Z\;\;\hbox{for any}\;\xi\in\Delta\}. 
\]
Suppose that $\T^k$ acts diagonally on $\C^n$ by a homomorphism $\rho=(\rho_1\ldots,\rho_n)$ with  actions $\rho_i:\T^k\times\C\to\C$ given by
\[
(\exp(\xi),z_i)\longmapsto \exp(-2\pi\sqrt{-1}\langle \fw_i,\xi \rangle)\cdot z_i,\quad i=1,\ldots, n
\]
where $\fw_i\in \Delta^*$. The moment map  of this action is 
\[
\mu(z)=\pi\sum_{i=1}^n|z_i|^2\fw_i 
\]
for any $z\in\C^n$. Here we require that $\mu$ is proper and the vectors $\fw_i$ span the vector space $\fT^*$, and hence the linear space of solutions $(\alpha_1,\ldots,\alpha_n)\in\R^n$ for the linear equation $\sum_{i=1}^n\alpha_i\fw_i=0$ in $\fT^*$ has dimension $n-k$. 

\begin{df}\label{df:toric}
A toric symplectic manifold is a symplectic quotient
at a regular value $\tau\in \fT^*$ of the moment map $\mu$ by $\T^k$ such that $\T^k$ acts freely on $\mu^{-1}(\tau)$, namely
\[
M_\tau=\mu^{-1}(\tau)/\T^k
\]
which has a real dimension $2m=2n-2k$ and a symplectic form $\omega_\tau\in \Omega^2(M_\tau)$ induced by the symplectic reduction of $(\C^n,\omega_0)$.
\end{df}
We denote by $H_*(M_\tau)$ (resp. $H^*(M_\tau)$) the quotient of the integral (co)homology group by its torsion subgroup. 
For $j\in I:=\{1,\ldots,n\}$, we denote by 
\[
{\rm cone}(I_j)=\big\{\sum_{i\neq j}\alpha_i\fw_i\;\big|\;\alpha_i\geq 0\big\}
\]
the positive cone spanned by the vectors $\fw_i,\; i\in I\setminus\{j\}$. Consider now the subspace
$$E_j=\{z=(z_1,\ldots,z_n)\in\C^n\;|\;z_j=0\}.$$
Obviously, $\tau\in {\rm cone}(I_j)$ if and only if $E_j\cap\mu^{-1}(\tau)\neq 0$. In this case, one can see that the corresponding submanifold 
\[
X_j=E_j\cap\mu^{-1}(\tau)/\T^k=\{[z]\in M_\tau\;|\;z_j=0\}. 
\]
is a smooth divisor of $M_\tau$ and determines a codimension-$2$ class $[X_j]$ in $H_{2m-2}(M_\tau)$.  It was shown in \cite{GuS} that $[X_j]$ ($1\leq j\leq n$) generate the homology ring of $H_*(M_\tau)$ w.r.t. the usual homology intersection product $\cap$ in the following sense: 
\begin{itemize}
\item the homology classes $[X_1],\ldots, [X_n]$ span $H_{2m-2}(M_\tau)$,
\item every relation $\sum_{i=1}^n\alpha_i\fw_i=0$ implies a corresponding relation $\sum_{i=1}^n\alpha_i[X_i]=0$,
\item for any subset $J=\{i_1,\ldots, i_j\}\subset I$, the relation $\tau\notin \{\sum_{i\in I\setminus J}\alpha_i\fw_i\;\big|\;\alpha_i\geq 0\}$ implies a corresponding relation $[X_{i_1}]\cap\cdots\cap[X_{i_j}]=0$,
\item the above relations are the only ones for all classes $[X_i]$ in $H_*(M_\tau)$. 
\end{itemize}
Note that $H_2(M_\tau)$ can be identified with the sublattice of $\Delta\subset\fT$:
\[\Delta(\tau)=\{\xi\in\Delta\;|\;\langle \fw_j,\xi\rangle=0\quad \hbox{for any}\;j\;\hbox{with}\;\tau\notin {\rm cone}(I_j) \}.\]
We denote the bijective map by
\begin{equation}\label{e:class}
\Delta(\tau)\longrightarrow H_2(M_\tau),\quad \eta\longmapsto A_\eta.
\end{equation}
Then, the dual $H^2(M_\tau)$ corresponds to a quotient of the integer lattice in $\fT^*$. Furthermore,  the first Chern class $c_1(TM)$ and the cohomology class of $\omega_\tau$ are given by 
\[c_1(TM)=\overline{\fw}_1+\ldots+\overline{\fw}_n,\quad [\omega_\tau]=\overline{\tau}\]
where $\overline{\gamma}$ is the image of $\gamma\in\fT^*$ under the correspondence
\[
\fT^*=\Delta^*\otimes\R\longrightarrow H^2(M_\tau;\R), \quad \gamma\longmapsto \overline{\gamma}.
\]
induced by (\ref{e:class}). 
Let $C(\tau)\subset\fT^*$ be the component of $\tau$ in the set of regular values of the moment map $\mu$ which is called the \textit{K\"{a}hler cone} of $M_\tau$. Its dual cone in $\Delta(\tau)$ is given by 
\begin{equation}\label{e:effcone}
\Delta^{\rm eff}(\tau)=\{\xi\in \Delta(\tau)\;|\;\langle t,\xi\rangle\geq 0\quad\hbox{for any}\;t\in C(\tau)\}.
\end{equation}
It is well known that each nontrivial class in $H_2(M_\tau)$ corresponding to an element in $\Delta^{\rm eff}(\tau)$ can be represented by a holomorphic curve in $M_\tau$. Such classes are called \textit{effective classes}. 
\begin{df}
We say that a toric manifold $M_\tau$ is \textit{Fano} if the vector $c_1=\sum_{i=1}^n\fw_i$ satisfies $\langle c_1,\xi\rangle>0$ for all $\xi\in \Delta^{\rm eff}(\tau)\setminus\{0\}$.  
\end{df}
Let $\mathcal{D}^{\rm eff}(\tau)\subset\Z^n$ denote the cone
\begin{equation}\label{e:cone}
\mathcal{D}^{\rm eff}(\tau)=\{(\langle\fw_1,\xi\rangle,\ldots,\langle\fw_n,\xi\rangle)\;|\;\xi\in \Delta^{\rm eff}(\tau)\}. 
\end{equation} 
Then we have a bijection $\Delta^{\rm eff}(\tau)\to \mathcal{D}^{\rm eff}(\tau)$, and denote its inverse map by
\begin{equation}\label{e:bi}
\mathcal{D}^{\rm eff}(\tau)\longrightarrow\Delta^{\rm eff}(\tau),\quad d\longmapsto \xi_d.
\end{equation} 
We emphasize that $\mathcal{D}^{\rm eff}(\tau)$ is not necessarily contained in the positive quadrant of $\Z^n$.

For $\eta=(d_1,\ldots,d_n)\in\Delta(\tau)$, we denote by $A_\eta\in H_2(M_\tau) $ the corresponding homology class as in~(\ref{e:class}). Let 
$[X]^{*d}$ denote the $\sum_kd_k$-fold quantum product $*$ on $H_*(M_\tau;\Lambda)$.  as in  (\ref{e:Xd}). 
From the Batyrev-Givental formula for the quantum homology of Fano toric manifolds, it is not hard to  see the following
\begin{thm}{\cite[Theorem~11.3.3]{MS}}\label{thm:fano}
If  $M_\tau$ is a Fano toric manifold, then the equality
\[
[X]^{*d^+}=T^{\langle[\omega],A_{\xi_d}\rangle}[X]^{*d^-}
\]
holds in the small quantum homology $H_*(M_\tau;\Lambda)$ for any $d\in\mathcal{D}^{\rm eff}(\tau)$ with the $n$-tuples $d^+$ and $d^-$ given by
$$d_i^+:=\max\{d_i,0\},\quad d_i^-:=\max\{-d_i,0\}.$$
\end{thm}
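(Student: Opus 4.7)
The plan is to derive the stated identity from the Batyrev--Givental presentation of the small quantum homology ring $QH_*(M_\tau)$ of a Fano toric manifold. Specifically, I would work with the Novikov ring graded by $H_2(M_\tau;\Z)/\mathrm{torsion}$, so that the formal parameter $q^B$ remembers the homology class $B$ and not merely its symplectic area $\langle[\omega_\tau],B\rangle$. For such a graded ring, Batyrev's quantum Stanley--Reisner relations in the Fano case read
\[
[X_1]^{*d_1^+}*\cdots*[X_n]^{*d_n^+} \;=\; q^{A_{\xi_d}}\,[X_1]^{*d_1^-}*\cdots*[X_n]^{*d_n^-}
\]
for every $d\in\mathcal{D}^{\mathrm{eff}}(\tau)$, where $*$ denotes the small quantum product. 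I would first reduce the theorem to this ring-level statement.

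Granting the ring identity, the reduction is purely formal: pair both sides with an arbitrary $\alpha\in H_*(M_\tau)$ under the Poincar\'e intersection pairing (extended $q$-linearly), and unravel the definition of the small quantum product to rewrite each side as a generating series in $q$. The coefficient of $q^B$ on the left is $\gw_B([X]^{*d^+},\alpha)$ while on the right it is $\gw_{B-A_{\xi_d}}([X]^{*d^-},\alpha)$. Setting $B=A_\eta$ and comparing coefficients yields precisely the claimed equality.

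To establish the quantum Stanley--Reisner relations, I would proceed as in McDuff--Salamon~\cite[Section~11.3]{MS}. The Fano hypothesis $\langle c_1,\xi\rangle>0$ for all $\xi\in\Delta^{\mathrm{eff}}(\tau)\setminus\{0\}$ ensures that for every effective class $B$ the moduli spaces of genus-zero stable maps in class $B$ have expected dimension and that bubbles of non-positive Chern number cannot appear. Hence the symplectic Gromov--Witten invariants involved agree with the algebraic counts Batyrev uses, and the relation can then be verified by combining an explicit computation on the minimal-energy stratum with the Divisor Axiom and the WDVV equation, or, alternatively, by invoking Givental's mirror theorem, which in the Fano toric setting provides the complete Batyrev presentation of $QH^*(M_\tau)$ directly.

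The principal obstacle is establishing the quantum Stanley--Reisner identity itself for symplectic Gromov--Witten invariants; the remainder of the argument is bookkeeping in the Novikov ring. In particular, one must rule out contributions from strata with multiply-covered components of negative Chern class and verify the compatibility of the Kuranishi-theoretic counts with the algebraic enumeration on each effective class $B=A_{\xi_d}$. The Fano hypothesis is precisely what makes this transversality and compatibility argument go through without further perturbation-theoretic complications.
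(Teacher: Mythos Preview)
Your proposal is correct and matches the paper's approach exactly: the paper does not give an independent proof of this statement but simply cites it as \cite[Theorem~11.3.3]{MS} and remarks that it ``is not hard to see'' from the Batyrev--Givental formula for $QH_*(M_\tau)$. Your reduction---take the quantum Stanley--Reisner relation $[X]^{*d^+}=q^{A_{\xi_d}}[X]^{*d^-}$ in the $H_2$-graded Novikov ring, pair with $\alpha$, and compare $q^{A_\eta}$-coefficients---is precisely the intended derivation, and your discussion of how the Fano hypothesis makes the Batyrev relations hold is the standard one from \cite[Section~11.3]{MS}.
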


\end{document}